\definecolor{orange}{HTML}{F5793A}
\definecolor{violet}{HTML}{A95AA1}
\definecolor{dunkelblau}{HTML}{0F2080}
\definecolor{hellblau}{HTML}{85C0F9}
\tikzset{->-/.style={decoration={
			markings,
			mark=at position .6 with {\arrow{>}}},postaction={decorate}}}
\newtheorem{Satz}{Proposition}[section]
\newtheorem{Korollar}[Satz]{Corollary}
\newtheorem{Lemma}[Satz]{Lemma}
\newtheorem{Theorem}[Satz]{Theorem}
\newtheorem{Theorem*}[]{Theorem}
\newtheorem{Korollar*}[]{Corollary}
\theoremstyle{definition}
\newtheorem{Definition}[Satz]{Definition}
\theoremstyle{remark}
\newtheorem{Bemerkung}[Satz]{Remark}
\newtheorem{Beispiel}[Satz]{Example}
\newtheorem{Claim}[]{Claim}
\newtheorem{Fact}[Satz]{Fact}
\DeclareMathOperator{\N}{\mathbb{N}}
\DeclareMathOperator{\Id}{Id}
\DeclareMathOperator{\Z}{\mathbb{Z}}
\DeclareMathOperator{\R}{\mathbb{R}}
\DeclareMathOperator{\calS}{\cal S}
\DeclareMathOperator{\X}{{\cal X}}
\DeclareMathOperator{\D}{\mathfrak{D}}
\DeclareMathOperator{\Y}{{\cal Y}}
\DeclareMathOperator{\calP}{{\cal P}}
\DeclareMathOperator{\GX}{{\cal G(X)}}
\DeclareMathOperator{\GY}{{\cal G(Y)}}
\DeclareMathOperator{\GXY}{{\cal G(X)\times G(Y)}}
\DeclareMathOperator{\calZ}{{\cal Z}}
\DeclareMathOperator{\G}{\mathcal{G}}
\DeclareMathOperator{\calH}{{\cal H}}
\DeclareMathOperator{\Lk}{\mathrm{Lk}}
\DeclareMathOperator{\C}{{\cal C}}
\DeclareMathOperator{\Cox}{\mathrm{Cox}}
\DeclareMathOperator{\stern}{\mathrm{Stern}}
\DeclareMathOperator{\St}{\mathrm{St}}
\DeclareMathOperator{\F}{\cal F}
\DeclareMathOperator{\neutre}{\mathbf{e}}
\DeclareMathOperator{\rC}{\mathrm{Cc}}
\def\restriction#1#2{\mathchoice
	{\setbox1\hbox{${\displaystyle #1}_{\scriptstyle #2}$}
		\restrictionaux{#1}{#2}}
	{\setbox1\hbox{${\textstyle #1}_{\scriptstyle #2}$}
		\restrictionaux{#1}{#2}}
	{\setbox1\hbox{${\scriptstyle #1}_{\scriptscriptstyle #2}$}
		\restrictionaux{#1}{#2}}
	{\setbox1\hbox{${\scriptscriptstyle #1}_{\scriptscriptstyle #2}$}
		\restrictionaux{#1}{#2}}}
\def\restrictionaux#1#2{{#1\,\smash{\vrule height .8\ht1 depth .85\dp1}}_{\,#2}} 
\newcommand{\Addresses}{{
		\bigskip
		\footnotesize
		
		Mireille Soergel, \textsc{ETH Z\"urich,
			Mathematics Department, 
			Rämistrasse 101,
			CH-8092 Zürich }\par\nopagebreak
		\textit{E-mail address}: \texttt{mireille.soergel@math.ethz.ch}

}}
\title{A generalization of the Davis-Moussong complex for Dyer groups}
\author{Mireille Soergel}
\begin{document}
 \maketitle
 \begin{abstract}
A common feature of Coxeter groups and right-angled Artin groups is their solution to the word problem. Matthew Dyer introduced a class of groups, which we call Dyer groups, sharing this feature. This class includes, but is not limited to, Coxeter groups, right-angled Artin groups, and graph products of cyclic groups. We introduce Dyer groups by giving their standard presentation and show that they are finite index subgroups of Coxeter groups. We then introduce a piecewise Euclidean cell complex $\Sigma$ which generalizes the Davis-Moussong complex and the Salvetti complex. The construction of $\Sigma$ uses simple categories without loops and complexes of groups. We conclude by proving that the cell complex $\Sigma$ is CAT(0).
 
 \end{abstract}

\section{Introduction}

There is extensive literature on Coxeter groups as well as on right-angled Artin groups and more generally graph products of cyclic groups. One common feature of these two families of groups is their solution to the word problem. It was given by Tits for Coxeter groups \cite{Tits} and by Green for graph products of cyclic groups \cite{Green}. The algorithm does not only give a solution to the word problem but also allows to detect whether a word is reduced or not. In his study of reflection subgroups of Coxeter groups, Dyer introduces a family of groups which contains both Coxeter groups and graph products of cyclic groups. A close study of \cite{Dyer} also implies that this class of groups, which we call Dyer groups, has the same solution to the word problem as Coxeter groups and graph products of cyclic groups. A complete and explicit proof is given in \cite{ParSoe} . 

Similarly to Coxeter groups and right-angled Artin groups, the presentation of a Dyer group can be encoded in a graph. Consider a simplicial graph $\Gamma$ with set of vertices $V = V(\Gamma)$ and set of edges $E = E(\Gamma)$, a vertex labeling $f : V\rightarrow \N_{\geq 2}\cup\{\infty\}$ and an edge labeling $m: E\rightarrow \N_{\geq 2}$. We say that the triple $(\Gamma, f, m)$ is a Dyer graph if for every edge $e = \{v,w\}$ with $f(v) \geq 3$ we have $m(e) = 2$. The associated Dyer group $D = D(\Gamma, f, m)$ is given by the following presentation \begin{multline*}
D = \langle x_v, v\in V \mid x_v^{f(v)} = \neutre \text{ if } f(v)\neq\infty, \\ [x_v,x_u]_{m(e)} = [x_u,x_v]_{m(e)} \text{ for all } e = \{u,v\} \in E \rangle,
\end{multline*} where $[a,b]_{k} = \underbrace{aba\dots}_{k}$ for any $a, b \in D$, $k\in\mathbb{N}$ and we denote the identity with $\neutre$.

It is natural to ask the following question. Consider a property P satisfied by both Coxeter groups and graph products of cyclic groups. Do Dyer groups also satisfy P?

In \cite{DavJan} Davis and Januszkiewicz show that right-angled Artin groups are finite index subgroups of right-angled Coxeter groups. For a right-angled Artin group $A$ they give right-angled Coxeter groups $W$ and $W'$ where $W'$ and $A$ are both finite index subgroups of $W$ and moreover the cubical complexes corresponding to $A$ and $W'$ are identical.
Inspired by this work we consider the following question: Are Dyer groups finite index subgroups of Coxeter groups? Out of a Dyer graph $(\Gamma, f, m)$ we build a labeled simplicial graph $\Lambda$ and prove the following statement.

\begin{Theorem}[Theorem \ref{Dyer}]
	$W(\Lambda) \cong D(\Gamma,f,m)\rtimes_{\xi} (\mathbb{Z}/2\mathbb{Z})^{k}$ for some determined $k\in\N$.
\end{Theorem}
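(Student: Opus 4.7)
The plan is to build $\Lambda$ explicitly from $(\Gamma,f,m)$ and exhibit mutually inverse homomorphisms between $W(\Lambda)$ and $D\rtimes_\xi(\mathbb{Z}/2\mathbb{Z})^k$ at the level of generators, with $k = |\{v\in V : f(v)\neq 2\}|$. First I would set up the action $\xi$: for each vertex $v$ with $f(v)\neq 2$ let $\tau_v$ act on $D$ by $x_v\mapsto x_v^{-1}$ while fixing every other standard generator. This is a well-defined automorphism because the order relation $x_v^{f(v)}=\neutre$ is invariant under inversion, and any braid relation $[x_u,x_v]_{m(e)}=[x_v,x_u]_{m(e)}$ involving $x_v$ must have $m(e)=2$ by the Dyer condition, hence reduces to $x_u x_v = x_v x_u$, which is preserved by $x_v\mapsto x_v^{-1}$. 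The $\tau_v$'s commute since they act on disjoint generators, so they assemble into a homomorphism $\xi:(\mathbb{Z}/2\mathbb{Z})^k\to\mathrm{Aut}(D)$.

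Next I would define $\Lambda$. Its vertex set consists of the vertices $v$ of $\Gamma$ with $f(v)=2$ together with a pair $s_v, s_v'$ for each $v$ with $f(v)\neq 2$. For each doubled $v$ one joins $s_v$ and $s_v'$ by an edge labeled $f(v)$; edges of $\Gamma$ among vertices with $f(v)=2$ are kept with their original labels; and all remaining edges (of label $2$) are dictated by matching the Coxeter relations of $W(\Lambda)$ to the semidirect-product relations under the dictionary $s_v\leftrightarrow \tau_v$ and $s_v'\leftrightarrow \tau_v x_v$. One then defines $\Phi:D\rtimes_\xi(\mathbb{Z}/2\mathbb{Z})^k\to W(\Lambda)$ on generators by $x_v\mapsto s_v s_v'$ for $f(v)\neq 2$, $x_v\mapsto x_v$ for $f(v)=2$, and $\tau_v\mapsto s_v$; and conversely $\Psi:W(\Lambda)\to D\rtimes_\xi(\mathbb{Z}/2\mathbb{Z})^k$ by $s_v\mapsto \tau_v$, $s_v'\mapsto \tau_v x_v$, $x_u\mapsto x_u$. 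Verifying that each map sends defining relations of the source to identities in the target is a finite check, after which $\Phi\circ\Psi$ and $\Psi\circ\Phi$ agree with the identity on generators and hence are identity maps.

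The main obstacle is the careful design of the edge set of $\Lambda$ and the corresponding verification under $\Psi$. The dictionary $s_v\leftrightarrow\tau_v$, $s_v'\leftrightarrow\tau_v x_v$ is asymmetric in $s_v$ versus $s_v'$: in the semidirect product one always has $\tau_v\tau_w=\tau_w\tau_v$ and $\tau_v(\tau_w x_w)=(\tau_w x_w)\tau_v$, but $(\tau_v x_v)(\tau_w x_w)=(\tau_w x_w)(\tau_v x_v)$ holds only when $x_v x_w=x_w x_v$ in $D$, i.e., when $\{v,w\}\in E(\Gamma)$. Likewise $s_v$ must commute with every $x_u$ for $f(u)=2$, while $s_v'$ commutes with $x_u$ only when $\{u,v\}\in E(\Gamma)$. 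Tracking which edges of $\Lambda$ must be added, and then verifying that each Coxeter relation is sent by $\Psi$ to an identity in $D\rtimes_\xi(\mathbb{Z}/2\mathbb{Z})^k$, is the bulk of the work; once the bookkeeping is organized, the isomorphism drops out.
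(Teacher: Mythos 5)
Your proposal is correct and follows essentially the same route as the paper: the same graph $\Lambda$ (your $s_v,s_v'$ are the paper's $v',v$), the same action $\xi$ with $k=|V_p\cup V_\infty|$, and the same pair of mutually inverse homomorphisms defined on generators via the dictionary $s_v\mapsto\xi_v$, $s_v'\mapsto\xi_v x_v$, with the deferred relation checks being exactly the finite verifications the paper carries out. The only point to tidy is that for $f(v)=\infty$ the ``edge labeled $\infty$'' between $s_v$ and $s_v'$ should simply be omitted, matching the paper's convention that edge labels lie in $\N_{\geq 2}$.
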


The next corollary is a direct consequence.

\begin{Korollar}[Corollary \ref{Consequence}]
	Every Dyer group is a finite index subgroup of some Coxeter group.
\end{Korollar}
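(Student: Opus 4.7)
The plan is to derive this corollary as an immediate formal consequence of Theorem~\ref{Dyer}, with no additional construction required. Given a Dyer group $D = D(\Gamma, f, m)$, the theorem produces a labeled simplicial graph $\Lambda$ together with an isomorphism $W(\Lambda) \cong D \rtimes_\xi (\Z/2\Z)^k$ for some $k \in \N$ determined by $(\Gamma, f, m)$. Since $W(\Lambda)$ is by construction a Coxeter group, it will suffice to recognize $D$ inside this semidirect product as a subgroup of finite index.

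I would then execute three short steps. First, identify $D$ with its image under the canonical injection $d \mapsto (d, \neutre)$ into $D \rtimes_\xi (\Z/2\Z)^k$; this image is automatically a normal subgroup, being the kernel of the projection onto the second factor. Second, transport this inclusion across the isomorphism of Theorem~\ref{Dyer} to obtain an embedding of $D$ into the Coxeter group $W(\Lambda)$. Third, compute the index: the quotient is isomorphic to $(\Z/2\Z)^k$, which has order $2^k$, so $[W(\Lambda) : D] = 2^k$ is finite.

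The main obstacle does not lie in this corollary but in Theorem~\ref{Dyer} itself, where the graph $\Lambda$ must be constructed and the semidirect-product decomposition must be verified. Once that theorem is in hand, the corollary follows by the short formal argument sketched above, and there is no further hurdle to overcome.
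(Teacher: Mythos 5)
Your proposal is correct and matches the paper exactly: the paper states the corollary as a direct consequence of Theorem~\ref{Dyer} with no further argument, and your three-step unpacking (canonical embedding $d \mapsto (d,\neutre)$, transport across the isomorphism, index $2^{|V_p\cup V_\infty|}$) is precisely the intended reasoning. You also correctly locate all the substantive work in Theorem~\ref{Dyer} itself.
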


This has many interesting consequences, among others it implies that Dyer groups are CAT(0) \cite[Theorem 12.3.3]{Davis}, linear \cite[Corollary 2]{Bourba}, and biautomatic \cite{OsaPrz}. This is the starting point for a more precise study of their geometry. Coxeter groups are known to act geometrically by isometries on the Davis-Moussong complex, right-angled Artin groups are known to act geometrically by isometries on the Salvetti complex. Moreover graph products of cyclic groups are known to be CAT(0) by \cite[Theorem 8.20]{Genevo}. The aim is to construct an analog of the Davis-Moussong and Salvetti complexes for Dyer groups and by way of the construction give a unified description of them. The piecewise Euclidean cell complex $\Sigma$ associated to a Dyer group $D$ is constructed as follows. One considers a simple category without loops $\X$ and a complex of groups $\D(\X)$. The development $\C$ of $\D(\X)$ will then encode the necessary information to build $\Sigma$. In Section \ref{sec:spherical}, this is done for the case of spherical Dyer groups, where a Dyer group $D$ is spherical if it decomposes as a direct product $D_2\times D_{\infty}\times D_p$ with $D_2$ a finite Coxeter group, $D_{\infty} = \Z^n$ for some $n\in\N$ and $D_p$ a direct product of finite cyclic groups. In Section \ref{sec:general}, the construction of Section \ref{sec:spherical} is extended to any Dyer group. The complexes $\D(\X)$ and $\C$ are analogues to the poset of spherical subsets $\calS$ and the poset of spherical cosets $W\calS$ for Coxeter groups, which are recalled in Section \ref{Coxeter}. Finally Section \ref{sec:sigma} is devoted to the construction of the piecewise Euclidean cell complex $\Sigma$ and culminates with the proof of the following statement.

\begin{Theorem}[Theorem \ref{thm:cat}]
	The cell complex $\Sigma$ is CAT(0).
\end{Theorem}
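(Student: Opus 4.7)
The plan is to follow Moussong's classical strategy for the Davis complex of a Coxeter group. By the generalized Cartan-Hadamard theorem for piecewise Euclidean complexes (Bridson-Haefliger II.4.1 combined with Ballmann's criterion), it suffices to show that $\Sigma$ is simply connected and locally CAT(0). Simple connectivity should follow from the construction itself: since $\Sigma$ is built from the development $\C$ of the complex of groups $\D(\X)$, and $\D(\X)$ has fundamental group $D$, the space $\C$ plays the role of the universal object in the development-theoretic sense and carries a $D$-action with fundamental domain (a geometric realization of) $\X$. The general theory of developments of developable complexes of groups (Bridson-Haefliger III.$\mathcal{C}$.3) then yields $\pi_1(\Sigma)=1$, once one has verified that the local groups inject into $D$, which is immediate from the standard presentation of a Dyer group.

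For local non-positive curvature I would apply Gromov's link condition: $\Sigma$ is locally CAT(0) if and only if the link of every vertex, equipped with its induced piecewise spherical metric, is CAT(1). Because the construction is $D$-equivariant, it is enough to check this at a set of orbit representatives, and by the stratification of $\C$ by the cosets recorded in $\D(\X)$ these representatives are naturally indexed by the spherical Dyer subgroups of $D$. Consequently the problem reduces to understanding the links in the spherical case treated in Section \ref{sec:spherical}.

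The main obstacle is therefore an analog of Moussong's lemma for spherical Dyer groups $D_2\times D_\infty\times D_p$. I would approach this factor by factor, identifying the vertex link as a spherical join of three pieces: the $D_2$-factor contributes the spherical link of the Davis-Moussong cell for a finite Coxeter group, known to be CAT(1) by Moussong's theorem; the $D_\infty=\Z^n$-factor produces a Euclidean torus whose vertex link is a round sphere; and the $D_p$-factor contributes a link of cubical type whose CAT(1) property is controlled by Gromov's combinatorial flag condition for cube complexes. Since the spherical join of CAT(1) spaces is CAT(1), this would establish local non-positive curvature. As a cross-check, one can also exploit Theorem \ref{Dyer} to embed the link as a locally isometric subcomplex of the vertex link of the Davis-Moussong complex of the ambient Coxeter group $W(\Lambda)$, whose CAT(1) property is known. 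I expect the technical core of the proof to lie precisely in identifying the combinatorial structure of the link at an arbitrary vertex of $\Sigma$ with a suitable spherical join, so that the pieces listed above can be recognized and the criterion applied.
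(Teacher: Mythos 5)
Your overall frame (simple connectivity plus a CAT(1) link condition, with simple connectivity coming from the development of $\D(\X)$ with respect to the injective morphism to $\pi_1(\D(\X))=D$) matches the paper, which establishes simple connectivity in Lemma \ref{simply connected} by realizing $\C$ as a simplicial subdivision of $\Sigma$. The gap is in your treatment of the link condition. You propose to reduce to the spherical case and then recognize the vertex link as a spherical join of three CAT(1) pieces. That join decomposition is only available when the Dyer group itself is spherical: in that case $\Sigma=\Sigma_2\times\Sigma_\infty\times\Sigma_p$ and the link is indeed a join (the paper records exactly this in a remark after Theorem \ref{thm:cat}). For a general Dyer group, however, the link of a vertex $Y$ is the \emph{union} $\bigcup_{gZ\in\St_{out}(Y,\C)}\Lk(Y,\Sigma(gZ))$ over all spherical pieces containing $Y$; each piece is a join, but the union is not, and a union of CAT(1) subcomplexes need not be CAT(1). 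This is the same phenomenon as for the Davis--Moussong complex of a non-spherical Coxeter group, and it is precisely why Moussong's Lemma \ref{Moussong} (simplices of size $\geq\pi/2$ plus the metric flag condition imply CAT(1)) is needed rather than a join argument. Your "stratification by cosets" reduction therefore does not go through.

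The paper's proof fills this gap with two concrete verifications: all edges of $\Lk(Y,\Sigma)$ have length $\pi-\pi/m(u,v)\geq\pi/2$, and $\Lk(Y,\Sigma)$ is a metric flag complex. The second point is the technical core you correctly anticipated but did not supply: one must show that pairwise adjacent vertices $v_1,\dots,v_k$ of $\Lk(Y,\Sigma)$ span a simplex if and only if they all lie in a single $\Lk(Y,\Sigma(gZ))$, which by Lemma \ref{lem:link} happens exactly when $Y\cup\{v_1,\dots,v_k\}\in V(\X)$, and then translate this via Lemma \ref{posdef} into positive definiteness of the cosine matrix $(\cos d(v_i,v_j))$. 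Lemma \ref{lem:link} in turn requires a careful case analysis (choosing coset representatives $g_v$ depending on whether $v$ lies in $V_2$, $V_\infty$, or $V_p$) that has no counterpart in your sketch. Finally, your proposed cross-check via Theorem \ref{Dyer} is also not sound as stated: a locally isometrically embedded subcomplex of a CAT(1) link need not be CAT(1) unless it is (locally) convex or one again verifies a flag-type condition, so this cannot substitute for the metric flag verification.
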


As we do not assume the reader to be familiar with simple categories without loops (scwols), Section \ref{sec:scwols} recalls the definitions and statements needed for the construction of the scwol $\C$. In Sections \ref{Coxeter} and \ref{sec:Salvetti} we recall the constructions of the Davis-Moussong complex and of the Salvetti complex.

\paragraph{Acknowledgements} The author thanks her advisors Thomas Haettel and
Luis Paris for introducing her to Dyer's work. She is very grateful to them for many helpful discussions, comments and advice but also for their unwavering patience. The author is partially supported by the French project “AlMaRe” (ANR-19-CE40-0001-01) of the ANR.

\section{Dyer groups}

We recall the definition of Dyer groups as given in the introduction. These groups were introduced by Dyer in \cite{Dyer}. It follows from Dyer's work that Dyer groups have the same solution to the word problem as Coxeter groups and right-angled Artin groups.

\begin{Definition} Let $\Gamma$ be a simplicial graph with set of vertices $V = V(\Gamma)$ and set of edges $E = E(\Gamma)$. Consider maps $f : V\rightarrow \N_{\geq 2}\cup\{\infty\}$ and $m: E\rightarrow \N_{\geq 2}$ such that for every edge $e = \{v,w\}$ with $f(v) \geq 3$ we have $m(e) = 2$. We call the triple $(\Gamma, f, m)$ a Dyer graph. 
	\end{Definition}

\begin{Definition}
	Let $(\Gamma, f, m)$ be a Dyer graph. The Dyer group $D = D(\Gamma, f, m)$ associated to the Dyer graph $(\Gamma, f, m)$ is given by the following presentation \begin{multline*}
	D = \langle x_v, v\in V \mid x_v^{f(v)} = \neutre \text{ if } f(v)\neq\infty, \\ [x_v,x_u]_{m(e)} = [x_u,x_v]_{m(e)} \text{ for all } e = \{u,v\} \in E \rangle,
	\end{multline*} where $[a,b]_{k} = \underbrace{aba\dots}_{k}$ for any $a, b \in D$, $k\in\mathbb{N}$ and we denote the identity with $\neutre$.
\end{Definition}

\begin{Beispiel}
 As mentioned in the introduction, Coxeter groups, right-angled Artin groups and graph products of cyclic groups are examples of Dyer groups.
\end{Beispiel}

\begin{Bemerkung}\label{SubDyer}
	For a subset $W\subset V$, we can consider $\Gamma_W$ the full subgraph of $\Gamma$ spanned by $W$ and the restrictions $f_W = \restriction{f}{W}$ and $m_W = \restriction{m}{E(\Gamma_W)}$. The triple $(\Gamma_W, f_W, m_W)$ is again a Dyer graph. We denote the associated Dyer group by $D_W$. From \cite{Dyer}, we know that  that the homomorphism $D_W \rightarrow D$ induced by the inclusion $W \hookrightarrow V$ is injective,
	hence $D_W$ can be regarded as a subgroup of $D$.
	
\end{Bemerkung}

\begin{Definition} Let $V_2 = \{v\in V\mid f(v) = 2\}$, $V_{\infty} = \{v\in V\mid f(v) = \infty\}$ and $V_p = V\setminus\{V_2\cup V_{\infty}\}$. For $i\in \{2, p, \infty\}$, let $\Gamma_i$ be the full subgraph spanned by $V_i$ and $D_i$ be the Dyer group associated to the triple $(\Gamma_i, f_{V_i}, m_{V_i})$. Note that $D_2$ is a Coxeter group, $D_{\infty}$ a right angled Artin group and $D_p$ a graph product of finite cyclic groups.
\end{Definition}

\begin{Beispiel}\label{nontrivial example}
	 Let $m,p\in\N_{\geq 2}$. Consider the Dyer graph $\Gamma_{m,p}$ given in Figure \ref{fig1}. The associated Dyer group is $$D_{m,p} = \langle a, b, c, d\mid b^2=c^2=d^p=\neutre, ab=ba,(bc)^m =\neutre, cd=dc\rangle.$$
\end{Beispiel}

\begin{figure}[h]
	\centering
	\begin{tikzpicture}

 \draw[fill, blue] (0,0) circle [radius=0.05];
 \draw[fill, blue] (1,0) circle [radius=0.05];
 \draw[fill,blue] (2,0) circle [radius=0.05];
 \draw[fill,blue] (3,0) circle [radius=0.05];
 \draw(0,0)node[above,blue]{$\infty$}node[below,blue]{$a$}--node[midway, above]{$2$}(1,0)node[above,blue]{$2$}node[below,blue]{$b$};
 \draw(1,0)--node[midway, above]{$m$}(2,0)node[above,blue]{$2$}node[below,blue]{$c$};
 \draw(2,0)--node[midway, above]{$2$}(3,0)node[above,blue]{$p$}node[below,blue]{$d$};
 
 \end{tikzpicture}
	\caption{Dyer graph $\Gamma_{m,p}$ for some $m, p\in\N_{\geq 2}$ }
	\label{fig1}
\end{figure}
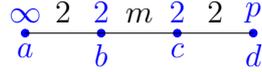
We recall the definition of Coxeter groups.
\begin{Definition}
	Let $\Lambda$ be a simplicial graph with set of vertices $V = V(\Lambda)$ and set of edges $E = E(\Lambda)$. Let $m: E\rightarrow \N_{\geq 2}$ be an edge labeling of $\Lambda$. The Coxeter group $W = W(\Lambda)$ associated to the graph $\Lambda$ is given by the following presentation \begin{multline*}
	W = \langle x_v, v\in V \mid x_v^2 = \neutre \text{ for all } v\in V, \\ [x_v,x_u]_{m(e)} = [x_u,x_v]_{m(e)} \text{ for all } e = \{u,v\} \in E \rangle,
	\end{multline*} where $[a,b]_{k} = \underbrace{aba\dots}_{k}$ for any $a, b \in W$, $k\in\mathbb{N}$ and we denote the identity with $\neutre$. Note that for an edge $e=\{u,v\}\in E$ the relation $[x_v,x_u]_{m(e)} = [x_u,x_v]_{m(e)}$ is equivalent to the relation $(x_vx_u)^{m(e)}=\neutre$, since $x_u^2=x_v^2=\neutre$.
\end{Definition}

\paragraph{Dyer groups are finite index subgroups of Coxeter groups}

The aim is now to show that every Dyer group is a finite index subgroup of a Coxeter group. From a given Dyer graph $(\Gamma,f,m)$ we build a graph $\Lambda$ with edge labeling $m'$. We then show that $D(\Gamma,f,m)$ is a finite index subgroup of $W(\Lambda)$. See Example \ref{nontrivial Coxeter} for a simple case. We define the undirected labeled simplicial graph $\Lambda$. Its set of vertices is $V(\Lambda) = V \cup \{v'\mid v\in V_p\cup V_{\infty}\}$. Two vertices $u, v\in V\subset V(\Lambda)$ span an edge in $\Lambda$ if and only if they span an edge $e = \{u,v\}$ in $\Gamma$, we set the label of the edge $e = \{u,v\}\in E(\Lambda)$ to be $m'(e) = m(e)$. For all $u\in V_p\cup V_{\infty}$ and $v\in V(\Lambda)\setminus\{u, u'\}$, there is an edge $e = \{u', v\} \in E(\Lambda)$ labeled by $m'(e)=2$. Finally for all $u\in V_p$ there is an edge $e = \{u, u'\}\in E(\Lambda)$ labeled by $m'(e) = f(u)$. So $V\subset V(\Lambda)$ spans a copy of $\Gamma$ in $\Lambda$ and $V'_p\cup V'_{\infty}\subset V(\Lambda)$ spans a complete graph in $\Lambda$. Let $W = W(\Lambda)$ be the Coxeter group associated to the graph $\Lambda$. We give an action of $(\mathbb{Z}/2\mathbb{Z})^{|V_p\cup V_{\infty}|}$ on $D$. For $v\in V_p\cup V_{\infty}$ let $\xi_v: \{x_u, u\in V\}\rightarrow D$ with $\xi_v(x_u) = x_u$ for any $u\in V\setminus\{v\}$ and $\xi_v(x_v) = x_v^{-1}$. For all $v\in V_p\cup V_{\infty}$, the map $\xi_v$ extends to a homomorphism $\xi_v: D\rightarrow D$. Moreover for all $u, v\in V_p\cup V_{\infty}$, $\xi_v\circ \xi_u = \xi_u\circ \xi_v$ and $(\xi_v)^2 = \neutre$. Hence we have an action $\xi: (\mathbb{Z}/2\mathbb{Z})^{|V_p\cup V_{\infty}|}\times D \rightarrow D$.

\begin{Theorem}\label{Dyer}
	$W \cong D\rtimes_{\xi} (\mathbb{Z}/2\mathbb{Z})^{|V_p\cup V_{\infty}|}$.
\end{Theorem}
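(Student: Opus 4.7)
The plan is to construct mutually inverse homomorphisms $\phi\colon W\to D\rtimes_{\xi}(\mathbb{Z}/2\mathbb{Z})^{|V_p\cup V_\infty|}$ and $\psi\colon D\rtimes_{\xi}(\mathbb{Z}/2\mathbb{Z})^{|V_p\cup V_\infty|}\to W$, using presentations on both sides. Write $K=(\mathbb{Z}/2\mathbb{Z})^{|V_p\cup V_\infty|}$ with canonical generators $e_v$ indexed by $v\in V_p\cup V_\infty$, and recall that in $D\rtimes_{\xi}K$ the product is $(d_1,k_1)(d_2,k_2)=(d_1\,\xi_{k_1}(d_2),k_1k_2)$.

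\textbf{Step 1: Define $\phi$.} On the Coxeter generators of $W=W(\Lambda)$ set
\[
\phi(v)=\begin{cases}(x_v,\mathbf{1}) & \text{if } v\in V_2,\\ (x_v,e_v) & \text{if } v\in V_p\cup V_\infty,\end{cases}\qquad \phi(v')=(\mathbf{1},e_v)\ \ \text{for } v\in V_p\cup V_\infty.
\]
The key point is that $(x_v,e_v)^2=(x_v\,\xi_{e_v}(x_v),\mathbf{1})=(x_v x_v^{-1},\mathbf{1})=(\neutre,\mathbf{1})$, so every image is an involution, matching the order-two relations of $W$. I would then check, edge type by edge type, that each Coxeter relation of $\Lambda$ holds: edges $\{u,v\}\subset V$ translate directly to Dyer edge relations; edges between $V$ and $V'$ (or inside $V'$) are labeled $2$ and reduce to the commutation $\xi_{e_v}(x_u)=x_u$ for $u\ne v$; the edge $\{u,u'\}$ for $u\in V_p$ is labeled $f(u)$ and $\phi(u)\phi(u')=(x_u,e_u)(\neutre,e_u)=(x_u,\mathbf{1})$, which has order exactly $f(u)$.

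\textbf{Step 2: Define $\psi$.} On the generators of $D$ and $K$ set
\[
\psi(x_v)=\begin{cases}v & \text{if } v\in V_2,\\ v\,v' & \text{if } v\in V_p\cup V_\infty,\end{cases}\qquad \psi(e_v)=v'\ \ \text{for } v\in V_p\cup V_\infty.
\]
I would verify first that this respects the Dyer presentation: the orders come out right (for $v\in V_p$, $(vv')^{f(v)}=\neutre$ by the $\{v,v'\}$-edge; for $v\in V_\infty$, $vv'$ has infinite order because there is no $\{v,v'\}$-edge in $\Lambda$), and the braid relations follow since every edge $e=\{u,v\}\in E(\Gamma)$ with $f(u)\ge 3$ or $f(v)\ge 3$ has $m(e)=2$, forcing pure commutation, which the construction of $\Lambda$ arranges among $\{u,u',v,v'\}$ because all those pairs (save possibly $\{u,u'\}$, $\{v,v'\}$) carry label $2$. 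Then I would check the semidirect product compatibility $\psi(e_v)\psi(x_u)\psi(e_v)^{-1}=\psi(\xi_v(x_u))$: for $u\ne v$ both sides equal $\psi(x_u)$ using the label-$2$ edges involving $v'$, while for $u=v$ the computation $v'(vv')v'=v'v=(vv')^{-1}$ matches $\psi(x_v^{-1})$. Finally, $\psi(e_u)$ and $\psi(e_v)$ commute because $u',v'$ span a label-$2$ edge in $\Lambda$, so $\psi$ is well defined on $K$.

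\textbf{Step 3: Inversion.} I would then check that $\phi\circ\psi$ and $\psi\circ\phi$ are the identity on generators. The only nontrivial verification is $\phi(\psi(x_v))=\phi(v)\phi(v')=(x_v,e_v)(\neutre,e_v)=(x_v,\mathbf{1})$ for $v\in V_p\cup V_\infty$, and $\psi(\phi(v))=\psi(x_v)\psi(e_v)=vv'\cdot v'=v$ in the same case; the remaining cases are immediate.

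\textbf{Main obstacle.} The conceptual content lies entirely in Step 1: one must guess the nonobvious assignment $\phi(v)=(x_v,e_v)$ for $v\in V_p\cup V_\infty$. Once this is in place, the rest is a careful but routine book-keeping of edges in $\Lambda$, where the real work is to confirm that the Dyer-graph constraint ``$m(e)=2$ whenever $f(v)\ge 3$'' is exactly what is needed to make the braid/commutation relations on the two sides correspond. The trickiest bookkeeping step is Step 2(d3), where both endpoints lie in $V_p\cup V_\infty$ and one must push $v'$ past $u$ and $u'$ past $v$ using the four label-$2$ edges supplied by the construction of $\Lambda$.
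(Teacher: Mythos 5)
Your proposal is correct and follows essentially the same route as the paper: both construct mutually inverse homomorphisms between $W(\Lambda)$ and the semidirect product by the same generator assignments and the same edge-by-edge verification of the Coxeter, Dyer, and conjugation relations. The only (immaterial) difference is an ordering convention: you take $\phi(v)=(x_v,e_v)=x_v\xi_v$ and $\psi(x_v)=vv'$, whereas the paper uses $\xi_v x_v$ and $y_{v'}y_v$; since all these elements are involutions or inverses of one another, the computations are identical up to reversal.
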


\begin{proof}
	Let us first recall the presentations of $W$, $D$ and $U = D\rtimes_{\xi} (\mathbb{Z}/2\mathbb{Z})^{|V_p\cup V_{\infty}|}$.
	\begin{multline*}W = \langle y_v, v\in V(\Lambda)\mid \forall v\in V(\Lambda), (y_v)^2=\neutre \\ \text{ and } \forall e = \{u,v\}\in E(\Lambda), (y_uy_v)^{m'(e)}=\neutre \rangle \end{multline*}
	 \begin{multline*}
	D = \langle x_v, v\in V \mid x_v^{f(v)} = \neutre \text{ if } f(v)\neq\infty, \\ [x_v,x_u]_{m(e)} = [x_u,x_v]_{m(e)} \text{ for all } e = \{u,v\} \in E \rangle
	\end{multline*}
	\begin{multline*}
	U = \langle \{x_u, u\in V\}\cup\{\xi_v, v\in V_p\cup V_{\infty}\}\mid x_u^{f(u)} = \neutre \text{ for all } u\in V \text{ with } f(u)\neq\infty, \\ [x_v,x_u]_{m(e)} = [x_u,x_v]_{m(e)} \text{ for all } e = \{u,v\} \in E \\
	\xi_v^2 = \neutre \text{ for all } v\in V_p\cup V_{\infty},\ \xi_v\xi_u=\xi_u\xi_v \text{ for all } u, v\in V_p\cup V_{\infty}\\
	\xi_ux_v=x_v\xi_u \text{ for all }u\in V_p\cup V_{\infty}, v\in V\setminus\{u\},\ \xi_ux_u\xi_u = x_u^{-1} \text{ for all } u\in V_p\cup V_{\infty}\rangle
	\end{multline*}
	
	We show that $U$ is isomorphic to $W$ by giving explicit homomorphisms \\ $\phi: W\rightarrow U$ and $\psi: U \rightarrow W$ satisfying $\phi\circ \psi = \Id_U$ and $\psi\circ\phi = \Id_W$.
	
	First consider the map $\phi: \{y_v, v\in V(\Lambda)\}\rightarrow U$ defined as follows: for $u\in V_2$, $\phi(y_u)=x_u$ and for $u\in V_p\cup V_{\infty}$, $\phi(y_u) = \xi_ux_u$ and $\phi(y_{u'}) = \xi_u$. We show that $\phi$ extends to a homomorphism $\phi: W\rightarrow U$.
	
	\begin{enumerate}
	\item For $u\in V_2$, $\phi(y_u)^2 =  x_u^2 =\neutre$. For $u\in V_p\cup V_{\infty}$, $\phi(y_{u'})^2=\xi_u^2=\neutre$ and $\phi(y_u)^2 = \xi_ux_u\xi_ux_u = x_u^{-1}x_u = \neutre$. So $\phi(y_u)^2 = \neutre$ for all $u\in V(\Lambda)$.
	\item Let $u,v\in V\subset V(\Lambda)$ with $ e=\{u,v\}\in E(\Lambda)$ so $e\in E$ and $m'(e)=m(e)$. If $u, v\in V_2$, 
	 we have $(\phi(y_u)\phi(y_v))^{m'(e)} = (x_ux_v)^{m(e)} = \neutre$ since $x_u^2=x_v^2=\neutre$ and hence $[x_u,x_v]_{m(e)}=[x_v,x_u]_{m(e)}$ is equivalent to $(x_ux_v)^{m(e)}=\neutre$. If $u\in V_2$ and $v\in V_p\cup V_{\infty}$, we have $m'(e)=2$ and so the relations in $U$ give $\phi(y_u)\phi(y_v) = x_u\xi_vx_v = \xi_vx_ux_v = \xi_vx_vx_u = \phi(y_v)\phi(y_u)$. If $u,v\in V_p\cup V_{\infty}$, $m'(e)=2$ and we have $\phi(y_u)\phi(y_v) = \xi_ux_u\xi_vx_v = \xi_vx_v\xi_ux_u = \phi(y_v)\phi(y_u)$.
	\item Let $u\in V_p\cup V_{\infty}$ and $v\in V\setminus\{u\}$. Then there is an edge $\{u',v\}\in E(\Lambda)$ with $m'(\{u',v\}) = 2$. If $v\in V_2$, $\phi(y_{u'})\phi(y_v) = \xi_ux_v = x_v\xi_u = \phi(y_v)\phi(y_{u'})$. If $v\in V_p\cup V_{\infty}\setminus\{u\}$, we have $\phi(y_{u'})\phi(y_v) = \xi_u \xi_vx_v = \xi_vx_v\xi_u = \phi(y_v)\phi(y_{u'})$.
	\item  Let $u\in V_p\cup V_{\infty}$ and $v\in (V_p\cup V_{\infty})\setminus\{u\}$, then there is an edge $\{u',v'\}\in E(\Lambda)$ with $m'(\{u',v'\}) = 2$ and we have $\phi(y_{u'})\phi(y_{v'}) = \xi_u \xi_v = \xi_v\xi_u = \phi(y_{v'})\phi(y_{u'})$.
	\item Finally for every $u\in V_p$ there is an edge $\{u',u\}\in E(\Lambda)$ with \linebreak $m'(\{u',u\}) = f(u)$ and $(\phi(y_{u'})\phi(y_{u}))^{f(u)} = (\xi_u\xi_ux_u)^{f(u)} = x_u^{f(u)} = \neutre$.
	
	\end{enumerate} So the map $\phi$ extends to a homomorphism $\phi: W\rightarrow U.$
	
	Now consider the map $\psi: \{x_u, u\in V\}\cup\{\xi_v, v\in V_p\cup V_{\infty}\} \rightarrow W$ defined as follows: for $u\in V_2$, $\psi(x_u) = y_u$ and for $u\in V_p\cup V_{\infty}$, $\psi(x_u) = y_{u'}y_u$ and $\psi(\xi_u) = y_{u'}$. We show that $\psi$ extends to a homomorphism from $U$ to $W$. 
	\begin{enumerate}\item For all $v\in V_2$, $f(v) = 2$ and so $\psi(x_v)^{f(v)} = y_v^2=\neutre$ and for all $v\in V_p$ there is an edge $e = \{v,v'\}\in E(\Lambda)$ with $m'(e) = f(v)$ so $\psi(x_v)^{f(v)} = (y_{v'}y_v)^{f(v)}=\neutre$. 
		\item For all $e = \{u,v\}\in E$ there is an edge $e = \{u,v\}\in E(\Lambda)$ with $m'(e)=m(e)$. If $u,v\in V_2$ we have $$[\psi(x_u),\psi(x_v)]_{m(e)} = [y_u,y_v]_{m(e)} = [y_v,y_u]_{m(e)} = [\psi(x_v),\psi(x_u)]_{m(e)}.$$ If $u\in V_2$ and $v\in V_p\cup V_{\infty}$, we have $m(e) = 2$ and $$\psi(x_u)\psi(x_v) = y_uy_{v'}y_v = y_{v'}y_vy_u = \psi(x_v)\psi(x_u).$$ If $u, v\in V_p\cup V_{\infty}$, $m(e) = 2$ and $$\psi(x_u)\psi(x_v) = y_{u'}y_uy_{v'}y_v = y_{v'}y_vy_{u'}y_u = \psi(x_v)\psi(x_u).$$
		\item For all $v\in V_p\cup V_{\infty}$ we have $\psi(\xi_v)^2 = y_{v'}^2 = \neutre$.
		\item For all $u, v\in V_p\cup V_{\infty}$, we have $e = \{u',v'\}\in E(\Lambda)$ with $m'(e)=2$, so $\psi(\xi_u)\psi(\xi_v) = y_{u'}y_{v'}= y_{v'}y_{u'} = \psi(\xi_v)\psi(\xi_u)$.
		\item For all $u\in V_p\cup V_{\infty}$ and $v\in V\setminus\{u\}$, we have $ \{u',v\}\in E(\Lambda)$ with $m'(\{u',v\})=2$ and $\{u',v'\}\in E(\Lambda)$ with $m'(\{u',v'\})=2$, so \linebreak 
	 $\psi(\xi_u)\psi(x_v) = \psi(x_v)\psi(\xi_u)$.
	 \item For all $u\in V_p\cup V_{\infty}$, $\psi(\xi_u)\psi(x_u)\psi(\xi_u) = y_{u'}y_{u'}y_uy_{u'} = y_uy_{u'} = \psi(x_u)^{-1}$.
	 \end{enumerate} So the map $\psi$ extends to a homomorphism $\psi: U\rightarrow W.$
	 
	 We now check that $\phi\circ \psi = \Id_U$ and $\psi\circ\phi = \Id_W$ by showing that these maps are the identity on the generators. For $v\in V_2$, we have $\phi(\psi(x_v)) = \phi(y_v) = x_v$ and $\psi(\phi(y_v)) = \psi(x_v) = y_v$. For $v\in V_p\cup V_{\infty}$, $\phi(\psi(x_v)) = \phi(y_{v'}y_v) = \xi_v\xi_vx_v = x_v$ and $\phi(\psi(\xi_v)) = \phi(y_{v'}) = \xi_v$. For $v\in V_p\cup V_{\infty}$, $\psi(\phi(y_v)) = \psi(\xi_vx_v) = y_{v'}y_{v'}y_v = y_v$ and $\psi(\phi(y_{v'})) = \psi(\xi_v) = y_{v'}$. 
\end{proof}

\begin{Korollar}\label{Consequence}
	Every Dyer group is a finite index subgroup of some Coxeter group. 
\end{Korollar}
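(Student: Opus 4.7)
The plan is to deduce the corollary as an immediate consequence of Theorem \ref{Dyer}. Starting from a Dyer group $D = D(\Gamma,f,m)$, I would first form the labeled simplicial graph $\Lambda$ constructed in the paragraph immediately preceding Theorem \ref{Dyer}, and consider the associated Coxeter group $W = W(\Lambda)$.

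By Theorem \ref{Dyer}, there is an isomorphism $W \cong D \rtimes_\xi (\mathbb{Z}/2\mathbb{Z})^{|V_p \cup V_\infty|}$. In any semidirect product $N \rtimes Q$ the normal factor $N$ has index exactly $|Q|$; embedding $D \hookrightarrow D \rtimes_\xi (\mathbb{Z}/2\mathbb{Z})^{|V_p \cup V_\infty|}$ via $d \mapsto (d,\neutre)$ and composing with the isomorphism from Theorem \ref{Dyer} realises $D$ as a subgroup of $W$ of index $2^{|V_p \cup V_\infty|}$. Under the standing assumption that the Dyer graph is finite (equivalently, that $V_p \cup V_\infty$ is finite), this index is finite, which is precisely the content of the corollary.

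There is essentially no obstacle to overcome: the hard work is already packaged into Theorem \ref{Dyer}, and the corollary is a formal reading of the semidirect product decomposition. The only point that genuinely deserves comment is the finiteness hypothesis on the vertex set, since this is what guarantees that the exponent $|V_p \cup V_\infty|$ is finite and hence that the resulting index is finite.
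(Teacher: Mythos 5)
Your argument is exactly the one the paper intends: the corollary is stated as a direct consequence of Theorem \ref{Dyer}, with $D$ embedding as the normal factor of the semidirect product $W \cong D \rtimes_{\xi} (\mathbb{Z}/2\mathbb{Z})^{|V_p\cup V_{\infty}|}$ and hence having index $2^{|V_p\cup V_{\infty}|}$ in $W$. Your remark about finiteness of the vertex set is a reasonable point of care, and the proof is correct.
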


\begin{Bemerkung}\label{CAT}
	As mentioned in the introduction, Corallary \ref{Consequence} has many interesting consequences. It implies that Dyer groups are CAT(0), linear, and biautomatic, that they satisfy the Baum-Connes conjecture, the Farrell-Jones conjecture, the Haagerup property and the strong Tits alternative. They also admit a proper and virtually special action on a CAT(0) cube complex.
\end{Bemerkung}

\begin{Beispiel}\label{nontrivial Coxeter}
	We apply the previous theorem to Example \ref{nontrivial example}. The corresponding graph $\Lambda$ is given in Figure \ref{fig2}. So by Theorem \ref{Dyer}, the Dyer group $D_{m,p}$ is an index $4$ subgroup of the Coxeter group
	
	 \begin{multline*}	
	W = \langle a, b, c, d, a', d'\mid a^2=b^2=c^2=d^2=a'^2=d'^2=\neutre, \\ (ab)^2= (bc)^m=(cd)^2=\neutre, (a'b)^2=(a'c)^2=(a'd)^2=(a'd')^2=\neutre, \\ (d'a)^2= (d'b)^2=(d'c)^2=\neutre, (d'd)^3=\neutre\rangle. \end{multline*}
\end{Beispiel}

\begin{figure}[!h]
	\centering
	\begin{tikzpicture}

 \draw[fill] (0,0) circle [radius=0.05]node[above]{$a$};
 \draw[fill] (1,0) circle [radius=0.05]node[above]{$b$};
 \draw[fill] (2,0) circle [radius=0.05]node[above]{$c$};
 \draw[fill] (3,0) circle [radius=0.05]node[above]{$d$};
 \draw[fill, color=orange] (0,1) circle [radius=0.05]node[above,color=orange]{$a'$};
 \draw[fill, color=orange] (3,-1) circle [radius=0.05]node[below,color=orange]{$d'$};
 \draw(0,0)--(1,0);
 \draw(1,0)--node[below]{$m$}(2,0);
 \draw(2,0)--(3,0);
 \draw(0,1)[color=violet]--(1,0);
 \draw(0,1)[color=violet]to[bend left=10](2,0);
 \draw(0,1)[color=violet]to[bend left=20](3,0);
 \draw(3,-1)[color=violet] to[bend left=30] (0,0);
  \draw(3,-1)[color=violet] to[bend left=30] (1,0);
 \draw(3,-1)[color=violet]to[bend left=20](2,0);
 \draw[color=dunkelblau](3,-1)--node[midway, left]{$p$}(3,0);
 \draw[color=violet](3,-1) .. controls (5,0.5) and (1.5,1.5) .. (0,1);
 
 \end{tikzpicture}
	\caption{The graph $\Lambda_{m,p}$ built out of the Dyer graph $\Gamma_{m,p}$ for some $m, p\in\N_{\geq 2}$. We color coded the vertices $V\subset V(\Lambda)$ and \textcolor{orange}{$\{v'\mid v\in V_p\cup V_{\infty}\}$}. For the edges: \textcolor{dunkelblau}{for edges of the form $e = \{u, u'\}$} and \textcolor{violet}{for edges of the form $e = \{u', v\}$, $v\in V(\Lambda)\setminus\{u, u'\}$ and $u'\in\{v'\mid v\in V_p\cup V_{\infty}\}$}. Every edge is labeled by $2$ if not specified otherwise.}
	\label{fig2}
\end{figure}
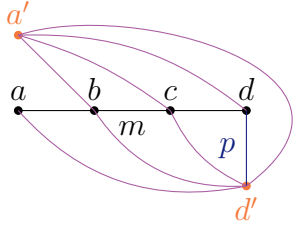

The Dyer group $D$ is not the only Dyer group which is a finite index subgroup of $W$. We describe such another Dyer group $D'= D(\Omega, g,n)$ by giving the Dyer graph $(\Omega, g, n)$. We define a second Dyer graph $(\Omega, g, n)$ starting from $\Gamma$. The vertices of $\Omega$ are $V(\Omega) = V\cup \{v' \mid v\in V_{\infty}\}$. The subsets $V_2\cup V_p\cup V_{\infty}$ and $V_2\cup V_p\cup \{v' \mid v\in V_{\infty}\}$ both span copies of $\Gamma$, with same labeling of edges, and for $u, v\in V_{\infty}$ the vertices $v, u'$ span an edge labeled by $2$ in $\Omega$ if and only if $v$ and $u$ span an edge in $\Gamma$. The labeling of the vertices is defined as follows $\restriction{g}{V_2\cup V_{\infty}\cup V_{\infty}'} = 2$ and $\restriction{g}{V_p} = \restriction{f}{V_p}$. Let $D'$ be the Dyer group associated to $\Omega$. Note that every generator $x_v$, $v\in V(\Omega)$ of $D'$ has finite order. We now give an action of $(\mathbb{Z}/2\mathbb{Z})^{|V_p\cup V_{\infty}|}$ on $D'$. For $v\in V_p$ let $\kappa_v: \{x_u, u\in V(\Omega)\}\rightarrow D'$ with $\kappa_v(x_u) = x_u$ for any $u\in V(\Omega)\setminus\{v \}$ and $\kappa_v(x_v) = x_v^{-1}$. For $v\in V_{\infty}$ let $\kappa_v: \{x_u, u\in V(\Omega)\}\rightarrow D'$ with $\kappa_v(x_u) = x_u$ for any $u\in V(\Omega)\setminus\{v, v'\}$ and $\kappa_v(x_v) = x_{v'}$ and $\kappa_v(x_{v'}) = x_{v}$. For all $v\in V_p\cup V_{\infty}$, the map $\kappa_v$ extends to a homomorphism $\kappa_v: D'\rightarrow D'$. Moreover for all $u, v\in V_p\cup V_{\infty}$, $\kappa_v\circ \kappa_u = \kappa_u\circ \kappa_v$ and $(\kappa_v)^2 = \neutre$. Hence we have an action $\kappa: (\mathbb{Z}/2\mathbb{Z})^{|V_p\cup V_{\infty}|}\times D' \rightarrow D'$.

\begin{Theorem}\label{Dyervar}
	$W \cong D'\rtimes_{\kappa} (\mathbb{Z}/2\mathbb{Z})^{|V_p\cup V_{\infty}|}$.
\end{Theorem}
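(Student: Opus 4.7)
The strategy mirrors the proof of Theorem \ref{Dyer}: set $U' = D' \rtimes_{\kappa} (\mathbb{Z}/2\mathbb{Z})^{|V_p\cup V_{\infty}|}$ and exhibit explicit mutually inverse homomorphisms $\phi: W \to U'$ and $\psi: U' \to W$, defined on generators and then shown to respect all defining relations. The essential new feature compared to Theorem \ref{Dyer} is the treatment of $u \in V_{\infty}$: previously $x_u$ was the infinite-order generator, whereas in $D'$ both $x_u$ and $x_{u'}$ are involutions (since $g(u) = g(u') = 2$), and $\kappa_u$ swaps them.

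Concretely, I would define $\phi$ by $\phi(y_u) = x_u$ for $u \in V_2 \cup V_{\infty}$, $\phi(y_u) = \kappa_u x_u$ for $u \in V_p$, and $\phi(y_{u'}) = \kappa_u$ for $u \in V_p \cup V_{\infty}$. Correspondingly, $\psi(x_u) = y_u$ for $u \in V_2 \cup V_{\infty}$, $\psi(x_u) = y_{u'} y_u$ for $u \in V_p$, $\psi(x_{u'}) = y_{u'} y_u y_{u'}$ for $u \in V_{\infty}$, and $\psi(\kappa_u) = y_{u'}$ for $u \in V_p \cup V_{\infty}$. Under $\phi$ the infinite-order element $y_u y_{u'}$ of $W$ (for $u \in V_{\infty}$) is sent to $x_u \kappa_u$, whose square equals $x_u x_{u'}$; since $u$ and $u'$ are not joined by an edge in $\Omega$, the subgroup $\langle x_u, x_{u'}\rangle \leq D'$ is infinite dihedral, so this is consistent with $y_u y_{u'}$ having infinite order in $W$.

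To show $\phi$ extends to a homomorphism, I would check $\phi(y_v)^2 = \neutre$ for each $v \in V(\Lambda)$ and the relation $(\phi(y_u) \phi(y_v))^{m'(e)} = \neutre$ for each $e = \{u,v\} \in E(\Lambda)$, splitting into cases according to the stratification $V_2, V_p, V_{\infty}$ and whether vertices are primed. The arguments from Theorem \ref{Dyer} handling the $V_2$ and $V_p$ subcases carry over verbatim, and the new $V_{\infty}$ subcases are if anything simpler, since $x_u$ is already an involution so many edges of $\Lambda$ reduce to straightforward commutation in $U'$. For $\psi$, I would verify the relations of $U'$: the order relations $x_u^{g(u)} = \neutre$ (observing that $(y_{u'} y_u y_{u'})^2 = \neutre$ for $u \in V_{\infty}$); the edge relations of $\Omega$, using that $V_2 \cup V_p \cup V_{\infty}$ and $V_2 \cup V_p \cup \{v' \mid v \in V_\infty\}$ each span a copy of $\Gamma$ inside $\Lambda$ with matching labels, together with the extra $\{v, u'\}$ edges of label $2$ between $V_{\infty}$ and $V_{\infty}'$; and the semidirect product relations, the key one being $\psi(\kappa_u x_u \kappa_u) = y_{u'} y_u y_{u'} = \psi(x_{u'})$ for $u \in V_{\infty}$, which is forced by our definition.

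The main obstacle is combinatorial rather than conceptual: the verifications partition into roughly a dozen subcases each for $\phi$ and $\psi$, and one must carefully track which edges of $\Omega$ come from which copy of $\Gamma$, and which extra $\{v,u'\}$ edges arise, in order to match them against the edges of $\Lambda$. Once these bookkeeping cases are dispatched, the identities $\phi \circ \psi = \Id_{U'}$ and $\psi \circ \phi = \Id_W$ reduce to direct substitution on generators, essentially as in the final paragraph of the proof of Theorem \ref{Dyer}.
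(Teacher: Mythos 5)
Your proposal is correct and follows essentially the same route as the paper: the generator assignments for $\phi$ and $\psi$ (in particular $\phi(y_u)=x_u$, $\phi(y_{u'})=\kappa_u$ for $u\in V_\infty$ and $\psi(x_{u'})=y_{u'}y_uy_{u'}$) coincide exactly with those in the paper's proof, which likewise reduces the remaining verifications to the case analysis of Theorem \ref{Dyer}. Your sanity check that $\phi(y_uy_{u'})$ squares to $x_ux_{u'}$, generating an infinite dihedral subgroup of $D'$, is a nice additional consistency observation not made explicit in the paper.
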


\begin{proof}
	Let us first recall the presentations of $W$, $D'$ and $U = D'\rtimes_{\kappa} (\mathbb{Z}/2\mathbb{Z})^{|V_p\cup V_{\infty}|}$.
	\begin{multline*}W = \langle y_v, v\in V(\Lambda)\mid \forall v\in V(\Lambda), (y_v)^2=\neutre \\ \text{ and } \forall e = \{u,v\}\in E(\Lambda), (y_uy_v)^{m'(e)}=\neutre \rangle. \end{multline*}
	 \begin{multline*}
	D '= \langle x_v, v\in V(\Omega) \mid x_v^{g(v)} = \neutre \text{ for all } v\in V(\Omega), \\ [x_v,x_u]_{n(e)} = [x_u,x_v]_{n(e)} \text{ for all } e = \{u,v\} \in E(\Omega) \rangle
	\end{multline*}
	\begin{multline*}
	U = \langle \{x_u, u\in V(\Omega)\}\cup\{\kappa_v, v\in V_p\cup V_{\infty}\}\mid x_u^{g(u)} = \neutre \text{ for all } u\in V(\Omega), \\ [x_v,x_u]_{n(e)} = [x_u,x_v]_{n(e)} \text{ for all } e = \{u,v\} \in E(\Omega) \\
	\kappa_v^2 = \neutre \text{ for all } v\in V_p\cup V_{\infty},\ \kappa_v\kappa_u=\kappa_u\kappa_v \text{ for all } u, v\in V_p\cup V_{\infty}\\
	\kappa_ux_v=x_v\kappa_u \text{ for all }u\in V_p, v\in V(\Omega)\setminus\{u\},\\
	\kappa_ux_v=x_v\kappa_u \text{ for all }u\in V_{\infty}, v\in V(\Omega)\setminus\{u, u'\} \\
	 \kappa_ux_u\kappa_u = x_u^{-1} \text{ for all } u\in V_p,\ \kappa_ux_u\kappa_u = x_{u'} \text{ for all } u\in V_{\infty}\rangle
	\end{multline*}
	
	As in Theorem \ref{Dyer}, we can check that $U$ is isomorphic to $W$ by considering explicit homomorphisms $\phi: W\rightarrow U$ and $\psi: U \rightarrow W$ satisfying $\phi\circ \psi = \Id_U$ and $\psi\circ\phi = \Id_W$.
	
	The map $\phi: \{y_v, v\in V(\Lambda)\}\rightarrow U$ is given as follows: for $u\in V_2$, $\phi(y_u)=x_u$, for $u\in V_p$, $\phi(y_u) = \kappa_ux_u$ and $\phi(y_{u'}) = \kappa_u$ and for $u\in V_{\infty}$, $\phi(y_u) = x_u$ and $\phi(y_{u'}) = \kappa_u$. One can easily check, using methods which are similar to those used in the proof of Theorem \ref{Dyer}, that the map $\phi$ induces a homomorphism $\phi: W\rightarrow U$.
	
	The map $\psi: \{x_u, u\in V(\Omega)\}\cup\{\kappa_v, v\in V_p\cup V_{\infty}\} \rightarrow W$ is given as follows: for $u\in V_2$, $\psi(x_u) = y_u$ and for $u\in V_p$, $\psi(x_u) = y_{u'}y_u$ and $\psi(\kappa_u) = y_{u'}$, finally for $u\in V_{\infty}$, $\psi(x_u) = y_u$, $\psi(x_{u'}) = y_{u'}y_uy_{u'}$ and $\phi(\kappa_u) = y_{u'}$. Again one can easily check, using methods which are similar to those used in the proof of Theorem \ref{Dyer}, that the map $\psi$ induces a homomorphism $\psi: U\rightarrow W$, that $\phi\circ\psi = Id_U$ and that $\psi\circ\phi = Id_W$. 

\end{proof}

\begin{figure}[!h]
	\centering
	\begin{tikzpicture}

 \draw[fill] (0,0) circle [radius=0.05]node[below]{$a$};
 \draw[fill] (1,0) circle [radius=0.05]node[below]{$b$};
 \draw[fill] (2,0) circle [radius=0.05]node[below]{$c$};
 \draw[fill] (3,0) circle [radius=0.05]node[below]{$d$};
 \draw[fill, color=orange] (0,1) circle [radius=0.05]node[left, color=orange]{$a'$};
 \draw(0,0)--(1,0);
 \draw(1,0)--node[above]{$m$}(2,0);
 \draw(2,0)--(3,0)node[above]{$p$};
 \draw(0,1)--(1,0);
 
 \end{tikzpicture}
	\caption{The graph $\Omega_{m,p}$ build out of the Dyer graph $\Gamma_{m,p}$ for some $m, p\in\N_{\geq 2}$. There are two types of vertices: $V\subset V(\Lambda_{m,p})$ and  \textcolor{orange}{$\{v'\mid v\in V_{\infty}\}$}. 
		Every vertex and every edge is labeled by $2$ if not specified otherwise.}
	\label{fig3}
\end{figure}
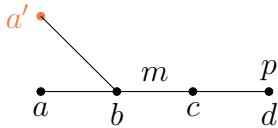

\begin{Beispiel}
	We apply the previous theorem to Example \ref{nontrivial example}. The corresponding graph $\Omega_{m,p}$ is given in Figure \ref{fig3}. The associated Dyer group is \begin{multline*}D'_{m,p} = \langle a, b, c, d, a'\mid a^2=a'^2= b^2=c^2=d^p=\neutre, \\ ab=ba, a'b=ba' ,(bc)^m =\neutre, cd=dc\rangle.\end{multline*} It is an index $4$ subgroup of the Coxeter group $W$ associated to the graph $\Lambda_{m,p}$ given in Figure \ref{fig2}.
\end{Beispiel}

\begin{Bemerkung}
	If the Dyer group $D$ is a right-angled Artin group, i.e. $V = V_{\infty}$, the constructions described here are those given in \cite{DavJan}. In particular if $D$ is a right-angled Artin group, the groups $W$ and $D'$ are right-angled Coxeter groups. So there is a decomposition of $W$ as a semi direct product of a right-angled Artin group and the right-angled Coxeter group $(\mathbb{Z}/2\mathbb{Z})^{|V|}$. 
\end{Bemerkung}


\begin{Bemerkung}
	There is a Coxeter group $W'$ associated to the Dyer group $D'$ such that $W' = D'\rtimes (\Z/2\Z)^{|V_p|}$. Question: do $W$ and $W'$ relate in any (meaningful) way? What can we say about their Davis-Moussong complexes? How do $D$ and $D'$ relate to each other? 
	What are all the Dyer subgroups of a given Coxeter group?
\end{Bemerkung}

\section{Complexes of groups}
This section introduces the tools needed in Section \ref{Sigma}. We first recall necessary definitions and results about small categories without loops. We then recall the constructions of the Davis-Moussong complex and of the Salvetti complex.

\subsection{Introduction to scwols}\label{sec:scwols}

Small categories without loops (scwols) and complexes of groups were introduced by Haefliger in \cite{Haefli91}, \cite{Haefli92}. Based on \cite{BriHae}, we would like to recall some notions about scwols and complexes of groups, as we do not assume the reader to be familiar with these constructions. We hope to give all necessary definitions and results, details can be found in \cite{BriHae}. The reader familiar with scwols might only consider the specific examples developed in this section as they will be used in the construction of the cell complex $\Sigma$.

A small category without loops (scwol) $\X$ consists of a set $V(\X)$, called the vertex set of $\X$ and a set $E(\X)$, called the set of edges of $\X$. Additionally two maps $i: E(\X)\rightarrow V(\X)$ and $t: E(\X)\rightarrow V(\X)$ are given. We call $i(\alpha)$ the initial vertex of $\alpha\in E(\X)$ and $t(\alpha)$ the terminal vertex of $\alpha\in E(\X)$. The set $E^{(2)}(\X)$ denotes the pairs $(\alpha,\beta)\in E(\X)\times E(\X)$ with $i(\alpha)=t(\beta)$. A third map $\circ: E^{(2)}(\X) \rightarrow E(\X)$ associates to each pair $(\alpha,\beta)$ an edge $\alpha\beta$, called their composition. These sets and maps need to satisfy the following conditions:
\begin{enumerate}
	\item for all $(\alpha,\beta)\in E^{(2)}$, we have $i(\alpha\beta) = i(\beta)$ and $t(\alpha\beta) = t(\alpha)$,
	\item for all $\alpha, \beta, \gamma\in E(\X)$, if $i(\alpha)=t(\beta)$ and $i(\beta)=t(\gamma)$, then $(\alpha\beta)\gamma = \alpha(\beta\gamma)$,
	\item for each $\alpha\in E(\X)$ we have $i(\alpha)\neq t(\alpha)$.
\end{enumerate}

A subscwol $\X'$ of $\X$ is given by subsets $V(\X')\subset V(\X)$ and $E(\X')\subset E(\X)$ such that if $\alpha\in E(\X')$, then $i(\alpha), t(\alpha)\in V(\X')$, and if $\alpha, \beta\in E(\X')$ with $i(\alpha)=t(\beta)$ then $\alpha\beta\in E(\X')$.

\begin{Bemerkung}\label{poset}
	To any poset $(\cal P, <)$ we can associate a scwol $\X$ where the set of vertices is $\cal P$ and the set of edges are pairs $(a,b)\in {\cal P}\times {\cal P}$ such that $b<a$, $t((a,b))=a$ and $i((a,b)) = b$. 
\end{Bemerkung}

\begin{Definition}[Product of scwols]\label{def product}
	Given two scwols $\X$ and $\Y$, their product $\X\times\Y$ is the scwol defined as follows: 
	$V(\X\times\Y) =V(\X)\times V(\Y)$ and $$E(\X\times\Y) = E(\X)\times V(\Y)\bigsqcup E(\X)\times E(\Y)\bigsqcup V(\X)\times E(\Y).$$ The maps $i, t : E(\X\times \Y) \rightarrow V(\X\times\Y)$ are defined by $i(\alpha,\beta) = (i(\alpha), i(\beta))$ and $t(\alpha,\beta) = (t(\alpha), t(\beta))$ (we consider $i(v) = t(v)=v$ for any $v\in V(\X)\bigsqcup V(\Y)$) and the composition $(\alpha,\alpha')(\beta,\beta') = (\alpha\beta, \alpha'\beta')$ whenever defined.
\end{Definition}

\begin{Bemerkung}
	Let $[n] = \{1,\dots, n\}$. One can inductively define the product of $n$ scwols $\X_1,\dots, \X_n$. Then the product $\X = \Pi_{i\in [n]} \X_i$ is the scwol with vertices $V(\X) =  \Pi_{i\in [n]} V(\X_i)$ and edges $ E(\X) = \bigsqcup_{S\subset[n], S\neq\emptyset} \left(\Pi_{i\in S^c}V(\X_i)\right)\times\left(\Pi_{i\in S}E(\X_i)\right).$ The maps $i, t : E(\X) \rightarrow V(\X)$ are defined by $i(\alpha)= i((\alpha_j)_{j\in [n]}) = (i(\alpha_j))_{j\in [n]}$ and $t(\alpha)=t((\alpha_j)_{j\in [n]}) = (t(\alpha_j))_{j\in [n]}$ (we consider $i(v) = t(v)=v$ for any $v\in \bigsqcup_{i\in [n]} V(\X_i)$) and the composition $\alpha\beta = (\alpha_j)_{j\in [n]}(\beta_j)_{j\in [n]} = (\alpha_j\beta_j)_{j\in [n]}$ whenever defined.
	
\end{Bemerkung}

\begin{Beispiel}\label{star}
	Consider a finite set $S$. Let $\calP(S)$ be the set of subsets of $S$. Consider the poset $(\calP(S), \subset)$ and its associated scwol $\Y_S$. Then $\Y_S = \Pi_{v\in S}\Y_{\{v\}}$. Moreover for any $v\in S$ the scwol $\Y_{\{v\}}$, also denoted $\Y_v$, has two vertices $\emptyset$ and $\{v\}$ and a single edge $e_v$ with $i(e_v)=\emptyset$ and $t(e_v) = \{v\}$.
\end{Beispiel}

\begin{Beispiel}\label{circle}
	Consider a finite set $S$. For $v\in S$ let $\calZ_{\{v\}} = \calZ_v$ be the scwol consisting of two vertices $\emptyset$ and $\{v\}$ and of two edges denoted $(\emptyset, \{v\}, \emptyset)$ and $(\emptyset, \{v\}, \{v\})$ with $i(\emptyset, \{v\}, \emptyset)=i(\emptyset, \{v\}, \{v\})=\emptyset$ and $t(\emptyset, \{v\}, \emptyset)=t(\emptyset, \{v\}, \{v\})=\{v\}$. Let $\calZ_S = \Pi_{v\in S}\calZ_{\{v\}}$. Note that $V(\calZ_S) = \calP(S)$ and the edges can be described as $E(\calZ_S) = \{(A, B, \lambda) \mid A\subsetneq B\subset S, \lambda\subset B\setminus A \}$, where $i(A,B,\lambda)=A$ and $t(A,B,\lambda)=B$. This example seems artificial at this point but will be quite useful later as the geometric realization of $\calZ_S$ is a torus $\mathbb{T}^{|S|}$ and its fundamental group is $\Z^{|S|}$. Indeed we are particularly interested in the case where $S$ is the vertex set of a complete Dyer graph $\Gamma$ for which all vertices are labeled by $\infty$.
	
\end{Beispiel}

A simple complex of groups ${\cal G(X)} = (G_v, \psi_{\alpha})$ over a scwol $\X$ is given by the following data: 
\begin{enumerate}
	\item for each $v\in V(\X)$, a group $G_v$ called the local group at $v$,
	\item for each $\alpha\in E(\X)$ an injective homomorphism $\psi_{\alpha} : G_{i(\alpha)}\rightarrow G_{t(\alpha)}$, with the following compatibility condition: $\psi_{\alpha\beta} = \psi_{\alpha}\psi_{\beta}$ whenever defined.
\end{enumerate}
A simple complex of groups ${\cal G(X)} = (G_v, \psi_{\alpha})$ over a scwol $\X$ is called inclusive if it additionally satisfies the following condition: for each $\alpha\in E(\X)$ we have $G_{i(\alpha)} <G_{t(\alpha)}$ and $\psi_{\alpha}(g) = g$ for all $g\in G_{i(\alpha)}$. We will only be considering 
simple inclusive complexes of groups. These are restrictions on the more general definition of complexes of groups which can be found in \cite{BriHae}. 
\begin{Definition}
	The product $\GXY$ of two simple complexes of groups $\GX$ and $\GY$ is the simple complex of groups over the scwol $\X\times\Y$ given by the following data: 
	\begin{enumerate}
		\item for each $v = (v_1,v_2)\in V(\X\times\Y)$, the local group $G_v = G_{v_1}\times G_{v_2}$ is the direct product of the corresponding local groups in $\GX$ and $\GY$,
		\item for each $\alpha = (\alpha_1, \alpha_2)\in E(\X\times\Y)$ the injective homomorphism is \linebreak $\psi_{\alpha} = \psi_{\alpha_1}\times\psi_{\alpha_2}$ (if one $\alpha_i$ is a vertex, we set $\psi_{\alpha_i}$ to be the identity on $G_{\alpha_i}$).
	\end{enumerate}

As $\GX$ and $\GY$ are simple complexes of groups so is $\GXY$.
	
\end{Definition}

Similarly to the definition of products of scwols this definition can be extended to finite products of simple inclusive complexes of groups.
The product $\Pi_{i\in [n]}\G(\X_i)$ of simple complexes of groups $\G(\X_i)$, $i\in[n]$, is the simple complex of groups over the scwol $\Pi_{i\in [n]}\X_i$ given by the following data: 
\begin{enumerate}
	\item for each $v = (v_i)_{i\in[n]}\in V(\Pi_{i\in [n]}\X_i)$, the local group $G_v = \Pi_{i\in [n]}G_{v_i}$ is the direct product of the corresponding local groups in $\G(\X_i)$,
	\item for each $\alpha = (\alpha_i)_{i\in[n]}\in E(\Pi_{i\in [n]}\X_i),$ the injective homomorphism is \linebreak $\psi_{\alpha} = \Pi_{i\in [n]}\psi_{\alpha_i}$ (if an $\alpha_i$ is a vertex, we set $\psi_{\alpha_i}$ to be the identity on $G_{\alpha_i}$).
\end{enumerate}

We will now give fundamental examples of complexes of groups and products of complexes of groups over the scwols introduced in Examples \ref{star} and \ref{circle}.

\begin{Beispiel}\label{cyclic star}
	We consider the scwols defined in Example \ref{star}. For every $v\in S$ we choose a positive integer $p_v$. Let $C_v$ be the finite cyclic group of order $p_v$. For $v\in S$, let $\D(\Y_v)$ be a simple complex of groups over $\Y_v$ by choosing $G_{\emptyset} = \langle \neutre \rangle$ the trivial group, $G_{\{v\}} = C_v$ and $\psi_{e_v}$ the trivial map. We define a simple complex of groups $\D(\Y_S)$ over $\Y_S$ as follows:
	\begin{enumerate}
		\item for $A\in V(\Y_S)$ we set $G_A = \Pi_{v\in A} C_v$, 
		\item for $e\in E(\Y_S)$ with $i(e) = A$ and $t(e) = B$ we have $A\subset B$ so $G_A<G_B$ and so there is a canonical inclusion $\psi_{e}: G_A\rightarrow G_B$. These inclusions satisfy the compatibility condition.
	\end{enumerate}
 We indeed have $\D(\Y_S) =\Pi_{v\in S}\D(\Y_{\{v\}})$. 
\end{Beispiel}

\begin{Beispiel}\label{Coxeter scwol}
 For a finite Coxeter system $(W, S)$ we can define $\mathfrak{W}(\Y_S)$ a simple complex of groups over $\Y_S$ as follows:
 \begin{enumerate}
 	\item for $A\in V(\Y_S)$ we choose the local group to be $W_A$, the subgroup of $W$ generated by $A$,
 	\item for $e\in E(\Y_S)$ with $i(e) = A$ and $t(e) = B$ we have $A\subset B$ so there is a canonical inclusion $\psi_{e}: W_A\rightarrow W_B$. These inclusions satisfy the compatibility condition.
 \end{enumerate}

In general in this case we have $\mathfrak{W}(\Y_S) \neq \Pi_{v\in S}\mathfrak{W}(\Y_{\{v\}})$ even though the scwols satisfy $\Y_S = \Pi_{v\in S}\Y_{\{v\}}$.
\end{Beispiel}

\begin{Beispiel}\label{cyclic circle}
	We consider the scwols defined in Example \ref{circle}. We can always define a trivial complex of groups over a scwol. The product of trivial complexes of groups will again be trivial. This leads to the following notation. For every $v\in S$ we define a simple complex of groups $\D(\calZ_v)$ over each scwol $\calZ_{v}$ by choosing $G_{\emptyset} = G_{\{v\}} = \langle \neutre \rangle$ the trivial group and $\psi_{(\emptyset, \{v\}, \emptyset)} = \psi_{(\emptyset, \{v\}, \{v\})}$ the trivial map. Similarly we define a simple complex of groups $\D(\calZ_S)$ by choosing $G_A = \langle \neutre \rangle$ the trivial group for every $A\in V(\calZ_S)$ and $\psi_{(A, B , \lambda)}$ the trivial map for every $(A, B, \lambda)\in E(\calZ_S)$. We have $\D(\calZ_S) =\Pi_{v\in S}\D(\calZ_{v})$.
\end{Beispiel}

Assume the scwol $\X$ is connected, i.e. there is only one equivalence class on $V(\X)$ for the equivalence relation generated by ($i(\alpha)\sim t(\alpha)$ for every edge $\alpha\in E(\X)$). 
One can define the fundamental group of a complex of groups ${\cal G(X)}$ over a scwol $\X$. For simplicity and as it suffices for the cases we consider, we give the following characterization.
\begin{Definition}Consider a simple complex of groups ${\cal G(X)}$ over a connected scwol $\X$. Assume each group $G_v$ is finitely presented with $G_v = \langle S_v \mid R_v\rangle$. Choose a maximal tree $T$ in the underlying graph. Let $E(\X)^{\pm} = \{\alpha^+, \alpha^-\mid \alpha\in E(\X)\}$. Then the fundamental group $\pi_1({\cal G(X)}, T)$ is generated by the set $$\left(\bigsqcup_{v\in V(\X)}S_v \right)\sqcup E(\X)^{\pm}$$ subject to the relations: 
	\begin{align*}
	&\text{ all the relations } R_v \text{ in the groups } G_v, \\
	&(\alpha^+)^{-1} = \alpha^- \text{ for all edges } \alpha\in E(\X),\\
	&\alpha^+\beta^+ = (\alpha\beta)^+ \text{ for } \alpha, \beta\in E(\X), \text{ whenever } \alpha\beta \in E(\X) \text{ is defined,} \\
	&\psi_{\alpha}(s) = \alpha^+s\alpha^-,\ \forall \alpha\in E(\X),\ \forall s\in S_{i(\alpha)},\\
	&\alpha^+ = \neutre,\ \forall \alpha\in T.
	\end{align*}

\end{Definition}
Different choices of $T$ will give isomorphic fundamental groups. So we can consider $\pi_1({\cal G(X)})=\pi_1({\cal G(X)}, T)$ for any choice of maximal tree $T$. Moreover the subgroup of $\pi_1({\cal G(X)}, T)$ generated by $\{\alpha^+, \alpha\in E(\X)\}$ corresponds to the fundamental group of the scwol $\X$.

\begin{Lemma} For two simple inclusive complexes of groups $\GX$ and $\GY$ we have 
	$$\pi_1(\GX)\times\pi_1(\GY)= \pi_1(\GXY).$$
\end{Lemma}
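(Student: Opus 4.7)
The plan is to construct explicit mutually inverse homomorphisms $\Phi: \pi_1(\GX)\times\pi_1(\GY) \to \pi_1(\GXY)$ and $\Psi: \pi_1(\GXY) \to \pi_1(\GX)\times\pi_1(\GY)$ by reading them off from the generator-and-relation presentations recalled just above. I would first fix maximal trees $T_\X \subset \X$ and $T_\Y \subset \Y$ with basepoints $v_0^\X \in V(\X)$, $v_0^\Y \in V(\Y)$, and assemble a maximal tree $T$ in $\X\times\Y$ consisting of the edges $(\alpha, v_0^\Y)$ for $\alpha \in T_\X$ together with all edges $(v_X, \beta)$ for $v_X \in V(\X)$ and $\beta \in T_\Y$. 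A quick count gives $(|V(\X)|-1) + |V(\X)|(|V(\Y)|-1) = |V(\X)| \cdot |V(\Y)| - 1$ edges, and connectivity (climb in the $\Y$-direction first, then in the $\X$-direction) confirms that $T$ is a maximal tree.

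Next I would define $\Phi$ on generators by sending a generator $s \in S_{v_X}$ of a local group of $\GX$ to $(s, \neutre) \in G_{v_X}\times G_{v_0^\Y} = G_{(v_X, v_0^\Y)}$, and an edge generator $\alpha^+ \in E(\X)^{\pm}$ to $(\alpha, v_0^\Y)^+$; symmetrically for generators coming from $\GY$. Define $\Psi$ on generators by decomposing an element $(g, h) \in G_{(v_X, v_Y)} = G_{v_X}\times G_{v_Y}$ into its two coordinates and sending it to $(g, h) \in \pi_1(\GX)\times\pi_1(\GY)$, while sending the edge generators $(\alpha, v_Y)^+$, $(v_X, \beta)^+$, and $(\alpha, \beta)^+$ to $(\alpha^+, \neutre)$, $(\neutre, \beta^+)$, and $(\alpha^+, \beta^+)$, respectively.

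Verifying that $\Phi$ respects the relations of the direct product splits into three groups: relations inherited from $\pi_1(\GX)$, which carry through verbatim using $(\alpha, v_0^\Y)(\alpha', v_0^\Y) = (\alpha\alpha', v_0^\Y)$ and $\psi_{(\alpha, v_0^\Y)} = \psi_\alpha \times \Id$; the symmetric relations from $\pi_1(\GY)$; and the commutation between the two factors. For $\Psi$ one has to check consistency on edges of the form $(\alpha, \beta)$, which follows from the scwol identity $(\alpha, \beta) = (\alpha, t(\beta))\circ(i(\alpha), \beta) = (t(\alpha), \beta)\circ(\alpha, i(\beta))$ and the corresponding identity in the direct product. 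Once these are in place, checking $\Phi\circ\Psi = \Id$ and $\Psi\circ\Phi = \Id$ on generators is a mechanical unpacking.

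The main obstacle is the commutation step for $\Phi$: the images $(s, \neutre) \in G_{(v_X, v_0^\Y)}$ and $(\neutre, t) \in G_{(v_0^\X, v_Y)}$ lie in different local groups of $\GXY$, so their commutation is not immediate. To handle this I would first establish the auxiliary fact that $(\alpha, v_Y)^+ = (\alpha, v_0^\Y)^+$ in $\pi_1(\GXY, T)$ for every $v_Y \in V(\Y)$, and symmetrically $(v_X, \beta)^+ = (v_0^\X, \beta)^+$ for every $v_X \in V(\X)$. This follows from the scwol composition relation $(\alpha, t(\beta))^+(i(\alpha), \beta)^+ = (t(\alpha), \beta)^+(\alpha, i(\beta))^+$ applied along a $T_\Y$-path, combined with the fact that edges of the form $(v, \beta)$ with $\beta \in T_\Y$ lie in $T$ and hence yield trivial elements. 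Using this and the relation $\psi_\alpha(s) = \alpha^+ s \alpha^-$ for edges in $T$, the generators $s$ and $t$ can both be transported into the common local group $G_{(v_X, v_Y)} = G_{v_X}\times G_{v_Y}$, where they commute by the direct product structure; everything else is a bookkeeping exercise.
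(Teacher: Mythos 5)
Your proposal is correct and follows essentially the same route as the paper: the same maximal tree $T_{\X\times\Y}$ built from $T_\X$ at the basepoint level of $\Y$ together with all $\Y$-tree edges over every vertex of $\X$, the same definitions of the two homomorphisms on generators, and the same key auxiliary fact that $(\alpha, v)^+$ is independent of $v\in V(\Y)$ (proved by composing along a $T_\Y$-path and using triviality of tree edges), which is exactly how the paper handles the commutation relations. Nothing further is needed.
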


\begin{proof}We start by choosing maximal trees $T_{\X}$ in $\X$ and $T_{\Y}$ in $\Y$. In $\X\times\Y$ we fix a vertex $v_0 = (v_{\X,0}, v_{\Y,0})$ and consider the tree $T_{\X\times\Y}$ spanned by the  edges $\{(v, \alpha)\in E(\X\times\Y)\mid v\in V(\X), \alpha\in E(T_{\Y})\}\cup\{(\alpha, v_{\Y, 0})\in E(\X\times\Y)\mid \alpha\in E(T_{\X})\}$. Since $T_{\X}$ and $T_{\Y}$ are maximal trees, so is $T_{\X\times\Y}$. To prove the statement we give explicit homomorphisms $\phi: \pi_1({\cal G(X)}, T_{\X})\times \pi_1({\cal G(Y)}, T_{\Y})\rightarrow \pi_1({\cal G(X)\times G(Y)}, T_{\X\times\Y})$, and $\xi: \pi_1({\cal G(X)\times G(Y)}, T_{\X\times\Y})\rightarrow \pi_1({\cal G(X)}, T_{\X})\times \pi_1({\cal G(Y)}, T_{\Y})$ and show that their composition is the identity. Recall that the generating set of $\pi_1({\cal G(X)}, T_{\X})$ is $S_{\X} = \left(\bigsqcup_{v\in V(\X)}S_v \right)\sqcup E(\X)^{\pm}$ and that the generating set of $\pi_1({\cal G(Y)}, T_{\Y})$ is $S_{\Y} = \left(\bigsqcup_{v\in V(\Y)}S_v\right) \sqcup E(\Y)^{\pm}$. So the generating set of $\pi_1(\GX)\times\pi_1(\GY)$ is $S_{\X}\sqcup S_{\Y}$. This generating set is subject to the relations in $\pi_1(\GX)$ and $\pi_1(\GY)$ and to the commutation relations $\{st = ts \mid s\in S_{\X}, t\in S_{\Y}\}$. The generating set of $\pi_1({\cal G(X)\times G(Y)}, T_{\X\times\Y})$ is $S_{\X\times\Y} = \left(\bigsqcup_{v\in V(\X\times\Y)}S_v\right) \sqcup E(\X\times\Y)^{\pm}$. For a vertex $v = (v_{\X}, v_{\Y})\in V(\X\times\Y)$ we have $G_v=G_{v_{\X}}\times G_{v_{\Y}}$ so $S_v = S_{v_{\X}}\sqcup S_{v_{\Y}}$ and the relations in the groups $G_v$ are $R_v = R_{v_{\X}}\sqcup R_{v_{\Y}}\sqcup \{xy = yx\mid x\in S_{v_{\X}}, y\in S_{v_{\Y}}\}$. 
There is a lot of redundancy in the generators of the group $\pi_1({\cal G(X)\times G(Y)}, T_{\X\times\Y})$. In particular:
	\begin{enumerate}
		\item For all $\alpha\in E(\X)$, $v, w\in V(\Y)$ we have $(\alpha, v)^+ = (\alpha, w)^+$. First assume that $v$ and $w$ are adjacent in $T_{\Y}$. So there is an edge $\gamma\in T_{\Y}$ with $i(\gamma)=v$ and $t(\gamma)=w$. By Definition \ref{def product} we have $(t(\alpha), \gamma)(\alpha, v) = (\alpha, \gamma) = (\alpha, w)(i(\alpha), \gamma)$ in $\X\times\Y$. As $(t(\alpha), \gamma), (i(\alpha), \gamma)\in T_{\X\times\Y}$, we get $(t(\alpha), \gamma)^+=(i(\alpha), \gamma)^+=\neutre$, which implies $(\alpha, v)^+ = (\alpha, \gamma)^+ =(\alpha, w)^+$. If there is no such edge in $T_{\Y}$, there is sequence of vertices $v_1,\dots, v_k$ with $v_i = v$ and $v_k = w$ and such that for $1\leq i\leq k-1$ the vertices $v_i$, $v_{i+1}$ are adjacent in $T_{\Y}$. So for $1\leq i\leq k-1$ we have $(\alpha, v_i)^+ = (\alpha, v_{i+1})^+$ and hence $(\alpha, v)^+ = (\alpha, w)^+$. 
		\item The previous statement implies that for every $\gamma\in T_{\X}$ and every $v\in V(\Y)$ we have $(\gamma, v)^+ = \neutre$. So we can do a similar construction as for the previous statement to show that for all $\alpha\in E(\Y)$, $v, w\in V(\X)$ we have $(v,\alpha)^+ = (w,\alpha)^+$.
		\item For $\alpha\in E(\X)$, $\beta\in E(\Y)$, $v\in V(\X), w\in V(\Y)$ we have on one hand $(\alpha,\beta)^+ = (\alpha, t(\beta))^+(i(\alpha), \beta)^+ = (\alpha, w)^+(v,\beta)^+$ and on the other hand we have $(\alpha, \beta)^+ = (t(\alpha),\beta)^+(\alpha, i(\beta))^+ = (v, \beta)^+(\alpha, w)^+$. So we get \linebreak $(\alpha, \beta)^+= (\alpha, w)^+(v,\beta)^+=(v, \beta)^+(\alpha, w)^+$.
		\item Let $v\in V(\X)$, $\alpha\in T_{\Y}$ and $s\in S_v$. For $w\in V(\Y)$ write $s_{(v,w)}$ for $s\in S_{(v,w)} = S_v\sqcup S_w\subset S_{\X\times\Y}$. Then $s_{(v,i(\alpha))} = s_{(v,t(\alpha))}$ in\linebreak $ \pi_1({\cal G(X)\times G(Y)}, T_{\X\times\Y})$. Indeed $(v, \alpha)^+ = (v, \alpha)^-=\neutre$ and so \linebreak $\psi_{(v,\alpha)}(s_{(v,i(\alpha))}) = s_{(v,i(\alpha))}$ hence $s_{(v,t(\alpha))} = s_{(v,i(\alpha))}$. Moreover this implies that for all $w, w'\in V(\Y)$, $s_{(v,w)} = s_{(v,w')}$.
		
		\item Similarly let $v, v'\in V(\X)$, $w\in V(\Y)$, $s\in S_w$. We can write $s_{(v,w)}$ for $s\in S_{(v,w)} = S_v\sqcup S_w\subset S_{\X\times\Y}$. So we have $s_{(v,w)} =s_{(v',w)}$.
		\item Let $v\in V(\X), w\in V(\Y)$ and $s\in S_v$, $t\in S_w$. The local group at $(v,w)$ is $G_{(v,w)} = G_v\times G_w$. So for $s_{(v,w)}\in S_{\X\times\Y}$, $t_{(v,w)} \in S_{\X\times\Y}$ we have $s_{(v,w)}t_{(v,w)}=t_{(v,w)}s_{(v,w)}$. By the two previous statements this implies that for $v'\in V(\X)$ and $w'\in V(\Y)$ we have $s_{(v, w')}t_{(v',w)} = t_{(v',w)}s_{(v,w')}$.
		\item Let $v, v'\in V(\X)$, $w, w'\in V(\Y)$, $\alpha\in E(\X)$, $\beta\in E(\Y)$, $s\in S_w$, $t\in S_v$. Then $s_{(v,w)} = s_{(i(\alpha), w)} = s_{(t(\alpha), w)} = \psi_{(\alpha,w)}(s_{(i(\alpha,w))})$ and $(\alpha, w')^+ = (\alpha,w)^+$. So $(\alpha, w')^+s_{(v,w)}(\alpha, w')^- = s_{(v,w)}$ and hence $(\alpha, w')^+s_{(v,w)}= s_{(v,w)}(\alpha, w')^+ $. Similarly $(v', \beta)^+t_{(v,w)}(v', \beta)^- = t_{(v,w)}$ and hence $(v', \beta)^+t_{(v,w)} = t_{(v,w)}(v', \beta)^+ $.  
	\end{enumerate}

   We define the map $\phi:\pi_1({\cal G(X)}, T_{\X})\times \pi_1({\cal G(Y)}, T_{\Y})\rightarrow \pi_1({\cal G(X)\times G(Y)}, T_{\X\times\Y})$ on the generators. For $v\in V(\X)$, $s\in S_v$ set $\phi(s) = s_{(v, v_{\Y,0})}$ 
 and for $v\in V(\Y)$, $s\in S_v$ set $\phi(s) = s_{(v_{\X,0},v)}$. 
  For $\alpha \in E(\X)$ set $\phi(\alpha^+) = (\alpha, v_{\Y,0})^+$,  $\phi(\alpha^-) = (\alpha, v_{\Y,0})^-$ and for $\alpha\in E(\Y)$ set $\phi(\alpha^+) = (v_{\X,0}, \alpha)^+$, $\phi(\alpha^-) = (v_{\X,0}, \alpha)^-$. Note that the map $\phi$ induces a map from $E(\X)\sqcup E(\Y)\rightarrow E(\X\times\Y)$, which respects the composition and sends an edge in $T_{\X}\sqcup T_{\Y}$ to an edge in $T_{\X\times\Y}$. For $\alpha\in E(\X)$, $s\in S_{i(\alpha)}$ we have $\phi(\psi_{\alpha}(s)) = s_{(t(\alpha), v_{\Y,0})} = (\alpha, v_{\Y,0})^+s_{(i(\alpha), v_{\Y,0})}(\alpha, v_{\Y,0})^-= \phi(\alpha^+)\phi(s)\phi(\alpha^-)$ . Also for $v\in V(\X)$, we have $R_v\subset R_{(v, v_{\Y,0})}$. Similar statements hold when choosing $v\in V(\Y)$ and $\alpha\in E(\Y)$. So the relations in $\pi_1(\GX)$ and $\pi_1(\GY)$ are respected under $\phi$. By the consequences listed above the commutation relations are also satisfied. Indeed for $\alpha\in E(\X)$, $\beta\in E(\Y)$, $$\phi(\alpha^+\beta^+) = (\alpha, v_{\Y,0})^+(v_{\X,0},\beta)^+ = (v_{\X,0},\beta)^+(\alpha, v_{\Y,0})^+ =\phi(\beta^+\alpha^+).$$ For $v\in V(\X)$, $w\in V(\Y)$, $s\in S_v$, $t\in S_w$ we have $\phi(s)\phi(t) =  \phi(t)\phi(s)$ by statement 6 above. Finally for $\alpha\in E(\X)$, $w\in V(\Y)$, $t\in S_w$ we have $\phi(t)\phi(\alpha^+) = \phi(\alpha^+)\phi(t)$ by using statement 7. There is a corresponding equality for $\alpha\in E(\Y)$, $w\in V(\X)$, $t\in S_w$. So $\phi$ is a homomorphism.

We define the map $\xi: \pi_1({\cal G(X)\times G(Y)}, T_{\X\times\Y})\rightarrow \pi_1({\cal G(X)}, T_{\X})\times \pi_1({\cal G(Y)}, T_{\Y})$ on the generators. For $ s = s_{(v,w)}\in S_{\X\times\Y}$ set $\xi(s_{(v,w)}) = s\in S_v\sqcup S_w\subset S_{\X}\sqcup S_{\Y}$. For $\alpha\in E(\X)$, $w\in V(\Y)$, let $\xi((\alpha, w)^+) = \alpha^+$ and $\xi((\alpha, w)^-) = \alpha^-$. For $\beta\in E(\Y)$, $v\in V(\X)$, let $\xi((v, \beta)^+) = \beta^+$ and $\xi((v, \beta)^-) = \beta^-$. For $(\alpha, \beta)\in E(\X)\times E(\Y)$, let $\xi((\alpha,\beta)^+) = \alpha^+\beta^+$ and $\xi((\alpha,\beta)^-) = \beta^-\alpha^-$. Similarly to $\phi$ one can check that $\xi$ is a homomorphism.

Finally for $v\in V(\X)\sqcup V(\Y)$, $s\in S_v$ we have $\xi(\phi(s)) = s$. For $\alpha\in E(\X)\sqcup E(\Y)$, $\xi(\phi(\alpha^+)) = \alpha^+$ and $\xi(\phi(\alpha^-)) = \alpha^-$. For $v\in V(\X)$, $w\in V(\Y)$, $s\in S_v$, $t\in S_w$ we have $\phi(\xi(s_{(v,w)})) = \phi(s) = s_{(v, v_{\Y,0})} = s_{(v,w)}$ and $\phi(\xi(t_{(v,w)})) = t_{(v,w)}$. For $v\in V(\X)$, $w\in V(\Y)$, $\alpha\in E(\X)$, $\beta\in E(\Y)$ we have $\phi(\xi((v,\beta)^+)) = (v,\beta)^+$ and  $\phi(\xi((\alpha,\beta)^+)) = \phi(\alpha^+\beta^+) = (\alpha, v_{\Y,0})^+(v_{\X,0},\beta)^+ = (\alpha, \beta)^+$ and \linebreak $\phi(\xi((\alpha, w)^+)) = \phi(\alpha^+) = (\alpha, v_{\Y,0})^+ = (\alpha, w)^+$. So $\phi$ and $\xi$ are isomorphisms.

\end{proof}

\begin{Beispiel}
	In Example \ref{cyclic star} the fundamental group of $\D(\Y_S)$ is $\times_{v\in S}C_v$. In Example \ref{cyclic circle} the fundamental group of $\D(\calZ_S)$ is $\Z^{|S|}$. In Example \ref{Coxeter scwol} the fundamental group of $\mathfrak{W}(\Y_S)$ is the Coxeter group $W$.
\end{Beispiel}

We will now consider morphisms. Consider two scwols $\X$ and $\Y$. A morphism of scwols $f: \X\rightarrow \Y$ is a map that sends $V(\X)$ to $V(\Y)$, sends $E(\X)$ to $E(\Y)$ and such that:
\begin{enumerate}
	\item for every $\alpha\in E(\X)$, $f(i(\alpha)) = i(f(\alpha))$ and $f(t(\alpha)) = t(f(\alpha))$,
	\item for composable edges $\alpha, \beta\in E(\X)$, $f(\alpha\beta) = f(\alpha)f(\beta)$.
\end{enumerate}

Let $\G(\X) = (G_v, \psi_{\alpha})$ and $\calH(\Y) = (H_v, \xi_{\alpha})$ be two simple complexes of groups. Let $f:\X\rightarrow \Y$ be a morphism of scwols. A morphism of complexes of groups $\phi = (\phi_v, \phi(\alpha)): \G(\X)\rightarrow \calH(\Y)$ over $f$ consists of: 
\begin{enumerate}
	\item a homomorphism $\phi_v: G_v\rightarrow H_{f(v)}$ for every $v\in V(\X)$ and
	\item an element $\phi(\alpha)\in H_{t(f(\alpha))}$ for every edge $\alpha\in E(\X)$ such that \linebreak $\mathrm{Ad}(\phi(\alpha))\xi_{f(\alpha)}\phi_{i(\alpha)} = \phi_{t(\alpha)}\psi_{\alpha}$ and $\phi(\alpha\beta) = \phi(\alpha)\xi_{f(\alpha)}(\phi(\beta))$ for all composable edges $\alpha, \beta\in E(\X)$,
\end{enumerate}
where $\mathrm{Ad}(\phi(\alpha))$ is the conjugation by $\phi(\alpha)$.

\hspace{1pt}

Finally let $G$ be a group. A morphism $\phi = (\phi_v, \phi(\alpha)): \G(\X)\rightarrow G$ consists of a homomorphism $\phi_v: G_v\rightarrow G$ for each $v\in V(\X)$ and an element $\phi(\alpha)\in G$ for each $\alpha\in E(\X)$ such that $\mathrm{Ad}(\phi(\alpha))\phi_{i(\alpha)} = \phi_{t(\alpha)}\psi_{\alpha}$ and $\phi(\alpha\beta) = \phi(\alpha)\phi(\beta)$.
 There is always a morphism from the complex of groups to the fundamental group of the complex of groups.
\begin{Beispiel}\label{morphism cyclic star}
	 Consider the complex of groups $\D(\Y_S)$ given in Example \ref{cyclic star}. Its fundamental group is $\pi_1(\D(\Y_S)) = \times_{v\in S}C_v$.  
	For $A\in V(\Y_S)$, let \linebreak $\phi^S_A: \times_{v\in A}C_v\rightarrow \times_{v\in S}C_v$ with $\phi^S(g) = g$ and for $\alpha\in E(\Y_S)$ let $\phi^S(\alpha) = \neutre\in \times_{v\in S}C_v$. The morphism $\phi^S = (\phi^S_A,\phi^S(\alpha)): \D(\Y_S)\rightarrow \times_{v\in S}C_v$ is injective on the local groups.
\end{Beispiel}

\begin{Beispiel}\label{morphism Coxeter}
	Consider the complex of groups $\mathfrak{W}(\Y_S)$ given in Example \ref{Coxeter scwol}. Its fundamental group is $\pi_1(\mathfrak{W}(\Y_S)) = W$.  
	For $A\in V(\Y_S)$, let $\phi_A: W_A\rightarrow W$ with $\phi(g) = g$ and for $\alpha\in E(\Y_S)$ let $\phi(\alpha) = \neutre\in W$. The morphism \linebreak $\phi = (\phi_A,\phi(\alpha)): \mathfrak{W}(\Y_S)\rightarrow W$ is injective on the local groups.
\end{Beispiel}

\begin{Beispiel}\label{morphism cyclic circle}
	Consider the complex of groups $\D(\calZ_S)$ given in Example \ref{cyclic circle}. Its fundamental group is $\pi_1(\D(\calZ_S)) = \Z^{|S|}$. For the notation: let $\neutre$ be the trivial element in $\Z^{|S|}$ and let $x_s$, $s\in S$ be the standard generators of $\Z^{|S|}$.
	For $A\in V(\calZ_S)$, let $\phi^S_A: \langle \neutre\rangle \rightarrow \Z^{|S|}$ with $\phi^S(\neutre) = \neutre$ and for $(A, B,\lambda)\in E(\calZ_S)$ let $\phi^S((A, B,\lambda)) = \Pi_{s\in\lambda}x_s$. The morphism $\phi^S = (\phi^S_A,\phi^S(\alpha)): \D(\calZ_S)\rightarrow \Z^{|S|}$ is injective on the local groups.
\end{Beispiel}

\begin{Definition}
	A complex of groups $\G(\X)$ over a scwol $\X$ is developable if there exists a morphism $\phi$ from $\G(\X)$ to some group $G$ which is injective on the local groups.
\end{Definition}

\begin{Bemerkung}
	This definition is not the original definition given in III.$\C$.2.11\cite{BriHae} but it is equivalent to it by Corollary III.$\C$.2.15 in \cite{BriHae} and better suited to our use.
\end{Bemerkung}

Let $\G(\X)$ be a complex of groups over a scwol $\X$, let $G$ be a group and $\phi:\G(\X)\rightarrow G$ a morphism. The development of $\X$ with respect to $\phi$ is the scwol $\C(\X,\phi)$ given as follows:
\begin{enumerate}
	\item its vertices are $$V(\C(\X,\phi)) = \{(g\phi_v(G_v), v)\mid v\in V(\X), g\phi_v(G_v)\in G/\phi_v(G_v)\},$$
	\item its edges are $$E(\C(\X,\phi))=  \{(g\phi_{i(\alpha)}(G_{i(\alpha)}), \alpha) \mid \alpha\in E(\X), g\phi_{i(\alpha)}(G_{i(\alpha)})\in G/\phi_{i(\alpha)}(G_{i(\alpha)})\},$$
	\item the maps $i, t: E(\C(\X,\phi))\rightarrow V(\C(\X,\phi))$ are $$i(g\phi_{i(\alpha)}(G_{i(\alpha)}), \alpha) = (g\phi_{i(\alpha)}(G_{i(\alpha)}), i(\alpha))$$ and $$t(g\phi_{i(\alpha)}(G_{i(\alpha)}), \alpha) = (g\phi(\alpha)^{-1}\phi_{t(\alpha)}(G_{t(\alpha)}), t(\alpha)),$$
	\item the composition is $(g\phi_{i(\alpha)}(G_{i(\alpha)}), \alpha)(h\phi_{i(\beta)}(G_{i(\beta)}), \beta) = (h\phi_{i(\beta)}(G_{i(\beta)}), \alpha\beta)$, where $\alpha, \beta$ are composable and $g\phi_{i(\alpha)}(G_{i(\alpha)}) =h\phi(\beta)^{-1}\phi_{i(\alpha)}(G_{i(\alpha)})$.
\end{enumerate} 

Note that by Theorem III.$\C$.2.13 in \cite{BriHae}, $\C(\X,\phi)$ is indeed well-defined. Moreover there is an action of $G$ on $\C(\X,\phi)$ where $G\backslash \C(\X,\phi) = \X$.

As for simplicial complexes we can define geometric realizations of scwols. For a scwol $\X$ denote its geometric realization by $|\X|$. If a scwol does not have multiple edges, this construction coincides with the geometric realization of simplicial complexes. This is the only case we will need in this article and details on the general construction can be found in Chapter III.$\C$.1 in \cite{BriHae}. The action of $G$ on $\C(\X,\phi)$ induces an action of $G$ on $|\C(\X,\phi)|$. If we require the action of $G$ on $|\C(\X,\phi)|$ to be by isometries, putting a metric on $|\C(\X,\phi)|$ corresponds to putting a metric on $|\X|$ as $G\backslash |\C(\X,\phi)| = |\X|$.

\begin{Beispiel}
Consider the complex $\D(\Y_S)$ and the morphism \linebreak $\phi^S: \D(\Y_S)\rightarrow \Pi_{v\in S}C_v$ from Example \ref{morphism cyclic star}. One can check that the development $\C(\Y_S,\phi^S)$ is the product $\Pi_{v\in S}\C(\Y_{\{v\}},\phi^{\{v\}})$. Each $\C(\Y_{\{v\}},\phi^{\{v\}})$ is a scwol with set of vertices $\{(g, \emptyset) \mid g\in C_v\}\cup \{(C_v, \{v\})\}$ and set of edges $\{(g, e_v)\mid g\in C_v\}$ with $i(g, e_v) = (g, \emptyset)$ and $t(g, e_v) = (C_v, \{v\})$. So it is a star on $|C_v|$ branches, the tips correspond to the vertices $\{(g, \emptyset) \mid g\in C_v\}$, the central vertex is $(C_v, \{v\})$ and the edges are oriented from the tips to the center. The group $C_v$ acts by rotation and stabilizes the central vertex. For each $v\in S$ choose $l_v>0$. Let $\stern(v)$ be the geometric realization of $\C(\Y_{v}, \phi^{\{v\}})$ as follows: for $g\in C_v$ consider the interval $I_g = [0, l_v]$ then $\stern(v) = \bigcup_{g\in C_v}I_g/\sim$ where $0\in I_g\sim 0\in I_{\neutre}$. Note that $C_v$ acts by isometries on $\stern(v)$. 
The space $\stern(S) = \Pi_{v\in S}\stern(v)$ with the product metric is a geometric realization of $\C(\Y_S,\phi^S)$, due to the product structure of $\C(\Y_S,\phi^S)$. Moreover $\Pi_{v\in S}C_v$ acts by isometries on $\stern(S)$.

\end{Beispiel}

\begin{Beispiel}
	Consider the complex $\mathfrak{W}(\Y_S)$ and the morphism $\phi: \mathfrak{W}(\Y_S)\rightarrow W$ from Example \ref{morphism Coxeter}. The development $\C(\Y_S,\phi)$ is a scwol with set of vertices $\{(gW_A, A)\mid A\subset S, gW_A\in W/W_A\}$ and $\{(gW_A, (A, B))\mid A\subsetneq B,\ gW_A\in W/W_A\}$ as set of edges where $i(gW_A, (A,B)) = (gW_A, A)$ and $t(gW_A, (A,B)) = (gW_B, B)$. It is the scwol associated to the poset $W\calP(S) = \bigcup_{T\subset S}W/W_T$, the poset of parabolic cosets ordered by inclusion. In Section \ref{Coxeter} we will introduce the Coxeter polytope $C_W$ of $W$, which is a geometric realization of $\C(\Y_S,\phi)$. 
\end{Beispiel}

\begin{Beispiel}
	Consider the complex $\D(\calZ_S)$ and the morphism $\phi^S: \D(\calZ_S)\rightarrow \Z^{|S|}$ from Example \ref{morphism cyclic circle}. One can check that the development $\C(\calZ_S,\phi^S)$ is the product $\times_{v\in S}\C(\calZ_{\{v\}},\phi^{\{v\}})$. Each $\C(\calZ_{\{v\}},\phi^{\{v\}})$ is a scwol with vertices $$V(\C(\calZ_{\{v\}},\phi^{\{v\}}))=\{(g, \emptyset) \mid g\in \langle x_v\rangle\}\cup \{(g, \{v\}) \mid g\in \langle x_v\rangle\}$$ and edges $$E(\C(\calZ_{\{v\}},\phi^{\{v\}}))=\{(g, (\emptyset, \{v\},\emptyset ))\mid g\in \langle x_v\rangle\}\cup\{(g, (\emptyset, \{v\},\{v\} ))\mid g\in \langle x_v\rangle\}$$ where $i(g, (\emptyset, \{v\},\emptyset )) = (g, \emptyset)$, and $t(g, (\emptyset, \{v\},\emptyset )) = (g, \{v\})$, and \linebreak $i(g, (\emptyset, \{v\},\{v\} )) = (g, \emptyset)$, and $t(g, (\emptyset, \{v\},\{v\} )) = (gx_v^{-1}, \{v\})$. So it is a line, each vertex has either two incoming or two outgoing edges. The group $\Z= \langle x_v\rangle$ acts by translation. There are two orbits, one corresponding to the vertices with incoming edges, one to the vertices with outgoing edges. A geometric realization of $\C(\calZ_{\{v\}}, \phi^{\{v\}})$ is the real line $\R$, where we identify $(\neutre, \emptyset)\in \C(\Z_{\{v\}}, \phi^{\{v\}})$ with $0\in\R$, and $(\neutre, \{v\})$ with $0.5$ and $(x_v^{-1}, \{v\})$ with $-0.5$. Since we want $\Z$ to act by isometries, this means that for every $x_v^k\in\Z$ we identify the vertex $(x_v^k, \emptyset)$ with $k\in\R$ and $(x_v^k, \{v\})$ with $k+0.5\in\R$. Using the product structure we get that $\R^{|S|}$ with the Euclidean metric is a geometric realization of $\C(\calZ_S,\phi^S)$ on which $\Z^{|S|}$ acts by translation. 
\end{Beispiel}

\subsection{Coxeter groups and the Davis-Moussong complex}\label{Coxeter}
The discussion of the Davis-Moussong complex is based primarily on \cite{Davis}. We will omit most proofs as they can be found in the literature, in particular in \cite{Davis} and \cite{Bourba}.

Let $S$ be a finite set. Let $M = (m(s,t))_{s,t\in S}$ be a symmetric matrix with $m(s,t)\in \N\cup\{\infty\}$, $m(s,s)=1$ and $m(s,t)=m(t,s)\geq 2$ if $s\neq t$. Such a matrix is called a Coxeter matrix. The Coxeter group associated to $M$ is given by the following presentation $$
W = \langle s\in S\mid (st)^{m(s,t)} = 1 \text{ for all } s, t \in S \rangle,
$$
 where $m(s,t)= \infty$ means there is no relation given between $s$ and $t$.
The pair $(W,S)$ is called a Coxeter system. Consider the $S\times S$ matrix $c$ defined by $c_{st} = \cos(\pi-\pi/m(s,t))$, the matrix $c$ is called the cosine matrix of the Coxeter matrix $M$. When $m(s,t)=\infty$, we intepret $\pi/\infty$ to be $0$ and $\cos(\pi-\pi/\infty) = -1$. The following fact states a classical result giving a necessary and sufficient condition for a Coxeter group to be finite.

\begin{Fact}[Theorem 6.12.9 \cite{Davis}]\label{spherical characterization}
	A Coxeter group $W$ is finite if and only if the cosine matrix $c$ is positive definite.
\end{Fact}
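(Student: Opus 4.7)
The plan is to use the Tits geometric representation. Set $V = \R^S$ with basis $\{e_s\}_{s \in S}$ and endow it with the symmetric bilinear form $B$ whose Gram matrix in this basis is $c$, so $B(e_s, e_s) = 1$ and $B(e_s, e_t) = -\cos(\pi/m(s,t))$ for $s \neq t$. For each $s \in S$ the linear map $\sigma_s(v) = v - 2 B(v, e_s) e_s$ is an involution preserving $B$, and a direct computation on the two-dimensional subspace spanned by $\{e_s, e_t\}$ shows that $\sigma_s \sigma_t$ has order $m(s,t)$. Hence the $\sigma_s$ satisfy the Coxeter relations and determine a homomorphism $\rho : W \to O(V, B)$; Tits's classical theorem guarantees that $\rho$ is faithful.

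For the implication $(\Leftarrow)$, assume $B$ is positive definite. Then $(V, B)$ is Euclidean and $O(V, B)$ is a compact Lie group. The fundamental chamber $C = \{v \in V : B(v, e_s) > 0 \text{ for all } s \in S\}$ is a nonempty open convex cone, since positive definiteness supplies a basis of $V$ that is $B$-dual to $\{e_s\}$ and lies in $C$. Standard Tits theory then shows that $W$ acts simply transitively on the chambers $\{w C : w \in W\}$, which forces $\rho(W)$ to be a discrete subgroup of $O(V, B)$. A discrete subgroup of a compact Lie group is finite, and faithfulness of $\rho$ gives $|W| < \infty$.

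For the implication $(\Rightarrow)$, assume $W$ is finite. Since $B(e_s, e_t) = 0$ whenever $s, t$ lie in distinct connected components of the Coxeter diagram, $V$ decomposes as a $B$-orthogonal direct sum indexed by these components, so I reduce to the case that the diagram is connected. Averaging any inner product over $W$ furnishes a $W$-invariant positive definite inner product $\langle \cdot, \cdot \rangle$ on $V$, and the $W$-equivariant self-adjoint operator $\phi : V \to V$ determined by $\langle \phi v, w \rangle = B(v, w)$ admits a spectral decomposition $V = \bigoplus_\lambda V_\lambda$ into $W$-invariant eigenspaces on which $B = \lambda \langle \cdot, \cdot \rangle$. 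It remains to show that every eigenvalue is positive, which I would do by proving $\rho$ is irreducible in the connected case: any $W$-invariant subspace $U \subseteq V$ is stable under each $\sigma_s$ and hence splits as $(U \cap \R e_s) \oplus (U \cap e_s^\perp)$, where $\perp$ refers to $B$, so either $e_s \in U$ or $U \subseteq e_s^\perp$. Using the identity $\sigma_t(e_s) = e_s + 2\cos(\pi/m(s,t))\, e_t$, the first alternative propagates along every edge of the diagram and, by connectedness, yields $U = V$; the second alternative forces $U \subseteq \bigcap_s e_s^\perp = \mathrm{rad}(B)$, which equals $V^W = 0$ (projecting each $e_s$ onto $V^W$ and using $\sigma_s(e_s) = -e_s$ shows the projection vanishes). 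By Schur's lemma, $\phi$ is a scalar $\lambda \cdot \Id$, and evaluating $\langle \phi e_s, e_s\rangle = B(e_s, e_s) = 1 > 0$ gives $\lambda > 0$.

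The main obstacle is the converse direction, specifically the irreducibility step: verifying that no nonzero proper $W$-invariant subspace survives requires both the combinatorial propagation via edges with $m(s,t) \geq 3$ and the structural observation $\mathrm{rad}(B) = V^W = 0$, and relies crucially on reducing to the connected case so that the two alternatives for $U$ can be ruled out simultaneously.
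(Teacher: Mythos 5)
The paper states this result only as a cited Fact (Theorem 6.12.9 of \cite{Davis}) and supplies no proof of its own, so there is nothing internal to compare against; your argument is correct and is essentially the standard proof from the cited literature: the Tits representation plus discreteness inside the compact group $O(V,B)$ for one direction, and averaging, the irreducibility argument via the dichotomy $e_s\in U$ or $U\subseteq e_s^{\perp}$ together with $\mathrm{rad}(B)=V^W=0$, and Schur's lemma for the other. All the steps you sketch (the propagation along edges with $m(s,t)\geq 3$, the reduction to a connected diagram, the positivity of the scalar $\lambda$) are the right ones and close correctly.
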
 

For $T\subset S$ let $W_T$ be the subgroup of $W$ generated by $T$. Consider the poset of spherical subsets $\calS = \{T\subset S\mid W_T \text{ is finite } \}$ ordered by inclusion. In an abuse of notation, let us also write $\calS$ for the scwol associated to the poset $\calS$. Let $\mathfrak{W}(\calS)$ be the complex of groups over $\calS$ where the local group at $T\in\calS$ is $W_T$ and for an edge $(R,T)$ the associated map $\psi_{(R,T)}: W_R\rightarrow W_T$ is the inclusion $\psi_{(R,T)}(r) = r$ for every $r\in R$. The fundamental group of $\mathfrak{W}(\calS)$ is $W$ and there is an injective morphism $\phi = (\phi_T, \phi((R,T)))$ where $\phi_T: W_T\rightarrow W$ is the inclusion and $\phi((R,T)) = \neutre$ for every edge $(R,T)$. Let $\C(\calS,\phi)$ be the development of $\mathfrak{W}(\calS)$ with respect to $\phi$. Let us also consider the poset $W\calS = \bigcup_{T\in\calS}W/W_T$, called the poset of spherical cosets. In a similar abuse of notation, let us also write $W\calS$ for the scwol associated to the poset $W\calS$.

\begin{Bemerkung}\label{Coxeter scwols}
	The set of vertices of $\C(\calS,\phi)$ is $\{(wW_T, T)\mid T\in\calS, wW_T\in W/W_T\}$. The set of edges of $\C(\calS,\phi)$ is $\{(wW_R, (R,T))\mid R, T\in\calS,\ R\subset T,\ wW_R\in W/W_R\}$, where $(wW_R, (R,T))$ is an edge from the vertex $(wW_R, R)$ to the vertex $(wW_T,T)$. In particular there is an edge from a vertex $(wW_R,R)$ to a vertex $(w'W_T, T)$ if and only if $R\subset T$ and $wW_T = w'W_T$ (i.e. $w'^{-1}w \in W_T$). 
\end{Bemerkung}

\begin{Lemma}\label{Coxeter description}
	The scwols $\C(\calS,\phi)$ and $W\calS$ are equal.
\end{Lemma}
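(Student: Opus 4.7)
The plan is to define an explicit map $F \colon \C(\calS,\phi) \to W\calS$ on vertices and edges and verify that it is an isomorphism of scwols. On vertices I send $(wW_T, T) \mapsto wW_T$, and on edges I send $(wW_R, (R,T)) \mapsto (wW_T, wW_R)$ (which is an edge in the poset scwol $W\calS$ from $wW_R$ to $wW_T$ since $wW_R \subsetneq wW_T$). Well-definedness on both vertices and edges is clear from the description in Remark \ref{Coxeter scwols}, where we noted that the target of $(wW_R,(R,T))$ in $\C(\calS,\phi)$ is $(wW_T,T)$ because $\phi((R,T)) = \neutre$.

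The essential point is that the vertex map is a bijection, i.e.\ that the pair $(wW_T,T)$ can be recovered from the set $wW_T \subseteq W$. Suppose $wW_T = w'W_{T'}$ as subsets of $W$. Multiplying on the left by $w^{-1}$, the subgroup $W_T$ equals the coset $w^{-1}w'W_{T'}$; since $\neutre \in W_T$, we get $w^{-1}w' \in W_{T'}$ and hence $W_T = W_{T'}$. The classical fact that a parabolic subgroup of a Coxeter system determines its generating set (see e.g.\ \cite[Section 4.1]{Davis}) then gives $T = T'$, and consequently $wW_T = w'W_T$. This shows that the vertex map is bijective.

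The edge map is bijective for the same reason: an edge in $W\calS$ is a strict inclusion $wW_R \subsetneq w'W_T$ of spherical cosets, which by the argument above is equivalent to $R \subsetneq T$ together with $wW_T = w'W_T$, i.e.\ the data of exactly one edge of $\C(\calS,\phi)$. Under $F$, the source $(wW_R,R)$ of an edge in $\C(\calS,\phi)$ is sent to $wW_R$ and the target $(wW_T,T)$ to $wW_T$, matching the source and target of the corresponding edge in the poset scwol $W\calS$. For composition, consider composable edges in $\C(\calS,\phi)$: $(gW_R,(R,T))$ and $(hW_{R'},(R',R))$ with $gW_R = hW_R$; their composition is $(hW_{R'},(R',T))$. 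Under $F$ these are sent to the edges $hW_R \to gW_T$ and $hW_{R'} \to hW_R$, whose composition in $W\calS$ is the edge $hW_{R'} \to gW_T = hW_T$, which is exactly the image of the composed edge.

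The only nontrivial ingredient is the injectivity of $T \mapsto W_T$ on subsets of $S$, which I expect to be the main conceptual point but is standard; everything else is bookkeeping with the definitions of the two scwols.
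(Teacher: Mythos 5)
Your proof is correct and follows essentially the same route as the paper: both identify the vertices $(wW_T,T)$ with the cosets $wW_T$ and match the edges via the equivalence $wW_R\subset w'W_T \Leftrightarrow R\subset T$ and $w'^{-1}w\in W_T$. The only difference is that you spell out the injectivity of $T\mapsto W_T$ and the compatibility with composition, which the paper's two-sentence proof leaves implicit.
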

\begin{proof}
	It follows from Remark \ref{Coxeter scwols} that the two scwols have the same set of vertices. For the edges note that for $wW_R, w'W_T\in W\calS$, we have $wW_R\subset w'W_T$ if and only if $R\subset T$ and $w'^{-1}w\in W_T$. So using Remark \ref{Coxeter scwols}, the sets of edges also coincide.
\end{proof}

\paragraph{Coxeter polytope} From now on, assume $W$ is finite. Let us recall the canonical representation of $W$. Consider $\Pi = \{\alpha_s \mid s\in S\}$ and $V = \oplus_{s\in S} \R\alpha_s$. Let $\langle\cdot,\cdot\rangle : V\times V\rightarrow \R$ be the scalar product on $V$ given by $\langle\alpha_s, \alpha_t\rangle = -\cos(\frac{\pi}{m(s,t)})$. The canonical representation of $W$ on $GL(V)$ is given by $\rho: W \rightarrow GL(V)$ with $\rho(s)(x) = x - 2\langle \alpha_s, x\rangle\alpha_s$, for $s\in S$, $x\in V$. The scalar product on $V$ is $\rho(W)$-invariant. There is a dual basis $\Pi^*= \{\alpha_s^* \mid s\in S\}$, satisfying $\langle\alpha_s^*,\alpha_t\rangle= 0$ if $s\neq t$ and $\langle\alpha_s^*,\alpha_s\rangle= 1$. We choose $x_0 = \sum_{s\in S}\alpha_s^* \in V$. The Coxeter polytope of $(W,S)$, denoted $\Cox(W)$, is the convex hull of $\{\rho(w)(x_0)\in V\mid w\in W\}$. It is endowed with the Euclidean metric. Note that its interior is non-empty. For a subset $T\subset S$, let $\Pi_T = \{\alpha_s \mid s\in T\}$ and $V_T$ be the subvector space of $V$ spaned by $\Pi_T$. Let $\Pi_T^*= \{\alpha_{s, T}^* \mid s\in T\}$ be the dual basis of $\Pi_T$ in $V_T$. Fix $x_{0,T} = \sum_{s\in T}\alpha_{s, T}^*$. Let $\Cox(W_T)$ be the convex hull of $\{\rho(w)(x_{0, T})\in V_T\mid w\in W_T\}$. Moreover let $\Cox_T(W)$ be the convex hull of $\{\rho(w)(x_0)\in V\mid w\in W_T\}$. Let $u = x_0-x_{0,T}$ and $t_u: V\rightarrow V$ be the translation by the vector $u$. This translation sends $\rho(w)(x_{0, T})$ to $\rho(w)(x_{0})$ for every $w\in W_T$. Specifically it is an isometry from $\Cox(W_T)$ to $\Cox_T(W)$.

\begin{Lemma}[Lemma 7.3.3 \cite{Davis}]
	The poset $W\calS$ and the face poset $\F(\Cox(W))$ of $\Cox(W)$ are isomorphic. Specifically the correspondance $wW_T\rightarrow w\Cox_T(W)$ induces an isomorphism of posets. 
\end{Lemma}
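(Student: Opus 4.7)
The plan is to define $\Phi : W\calS \to \F(\Cox(W))$ by $\Phi(wW_T) = w\Cox_T(W)$, verify that it lands in the face poset, and then check it is bijective and order-preserving. Well-definedness is immediate: if $w' = wv$ with $v \in W_T$, then $\rho(v)$ permutes the vertex set $\{\rho(u)x_0 : u \in W_T\}$ of $\Cox_T(W)$, so $w'\Cox_T(W) = w\Cox_T(W)$.

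The crucial step is to exhibit $\Cox_T(W)$ as a face of $\Cox(W)$ by producing an explicit supporting hyperplane. Set $y_T = \sum_{s \in S\setminus T}\alpha_s^* \in V$. For $t \in T$ and $s \in S\setminus T$ one has $\rho(t)\alpha_s^* = \alpha_s^* - 2\langle \alpha_t, \alpha_s^*\rangle \alpha_t = \alpha_s^*$, so $y_T$ is fixed pointwise by $W_T$. The linear form $\ell(x) = \langle y_T, x\rangle$ is therefore constant on $\Cox_T(W)$, with value $\ell(x_0)$. On the other hand, for every $w \in W$ one can write $w = vu$ with $v \in W_T$ and $u$ a minimal length $W_T$-coset representative; invariance of $y_T$ under $W_T$ yields $\ell(\rho(w)x_0) = \ell(\rho(u)x_0)$, and using a reduced expression for $u$ (whose letters then lie outside $T$ by minimality) together with $\langle y_T, \alpha_s\rangle = 1$ for $s \in S\setminus T$, one shows by induction on length that $\ell(\rho(u)x_0) \leq \ell(x_0)$ with equality if and only if $u = \neutre$. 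Hence $\Cox_T(W) = \Cox(W) \cap \{\ell = \ell(x_0)\}$ is a face, and by $W$-equivariance the same applies to each $w\Cox_T(W)$.

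For surjectivity, any face $F$ of $\Cox(W)$ contains a vertex; since $x_0$ lies in the interior of the fundamental Weyl chamber its $W$-stabilizer is trivial, so vertices of $\Cox(W)$ are in $W$-equivariant bijection with $W$, and one may replace $F$ by $\rho(w)^{-1}F$ so that $x_0 \in F$. The edges of $\Cox(W)$ at $x_0$ point in the $|S|$ linearly independent directions $\rho(s)x_0 - x_0 = -2\alpha_s$, so $\Cox(W)$ is simple at $x_0$ and the face at $x_0$ is determined by the subset $T \subset S$ of edge directions it contains; checking against the hyperplane from the previous paragraph identifies this face with $\Cox_T(W)$. Injectivity follows similarly: an equality $w\Cox_T(W) = w'\Cox_{T'}(W)$ forces $T = T'$ by comparing vertex sets, and then $w^{-1}w' \in W_T$ because the supporting hyperplane argument identifies $W_T$ as the setwise stabilizer of $\Cox_T(W)$ in $W$.

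Order preservation then drops out: $\Cox_T(W) \subset \Cox_{T'}(W)$ whenever $T \subset T'$ (by the inclusion of vertex sets $W_T x_0 \subset W_{T'} x_0$), and combined with the stabilizer description this yields $w\Cox_T(W) \subset w'\Cox_{T'}(W)$ iff $T\subset T'$ and $w^{-1}w' \in W_{T'}$, which is precisely $wW_T \subset w'W_{T'}$. The main obstacle is the analysis of the maximum locus of $\ell$ in the second paragraph, which relies on the standard dictionary between reduced expressions in $(W,S)$ and positive roots in the canonical representation; once that fact is available, every other step is routine bookkeeping.
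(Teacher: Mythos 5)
The paper itself does not prove this statement: it is imported verbatim as Lemma 7.3.3 of \cite{Davis}, so there is no in-paper argument to compare against and your proposal has to stand on its own. Its skeleton is the standard one and most of it is sound: $y_T=\sum_{s\in S\setminus T}\alpha_s^*$ is indeed $W_T$-invariant, the functional $\ell=\langle y_T,\cdot\rangle$ is the right supporting device, and well-definedness, injectivity and order-preservation are handled correctly. However, the parenthetical claim that a minimal-length coset representative $u$ has a reduced expression all of whose letters lie outside $T$ is false: in $W=\langle s,t\mid s^2=t^2=(st)^3=\neutre\rangle$ with $T=\{s\}$, the element $ts$ is the minimal-length representative of $W_T\,ts$ yet contains the letter $s$; minimality only controls the \emph{first} letter. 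As stated, the induction establishing that $\ell$ is maximized exactly on $W_Tx_0$ therefore does not go through. The correct mechanism is the one you gesture at in your last sentence: $x_0-\rho(w)x_0$ equals twice the sum of the positive roots inverted by $w^{-1}$, and since $\langle y_T,\beta\rangle\geq 0$ for every positive root $\beta$, with equality exactly when $\beta$ is supported on $T$, one gets $\ell(\rho(w)x_0)\leq\ell(x_0)$ with equality iff $w\in W_T$. This repairs the step and makes the coset decomposition unnecessary.

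The second genuine gap is in surjectivity. You assert that the edges of $\Cox(W)$ at $x_0$ are \emph{exactly} the $|S|$ segments $[x_0,\rho(s)x_0]$ and conclude simplicity, but you have only shown these segments are edges (they are the faces $\Cox_{\{s\}}(W)$), not that there are no others; without that, "the face at $x_0$ is determined by the subset of edge directions it contains" is unjustified. The standard repair avoids simplicity altogether: if $\langle y,\cdot\rangle$ is any linear functional maximized on $\Cox(W)$ at a face containing $x_0$, testing against $\rho(s)x_0=x_0-2\alpha_s$ gives $\langle y,\alpha_s\rangle\geq 0$ for all $s$; setting $T=\{s\mid\langle y,\alpha_s\rangle=0\}$, the same positive-root computation as above shows the maximum of $\langle y,\cdot\rangle$ over the orbit is attained precisely on $W_Tx_0$, so the face is $\Cox_T(W)$. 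This yields surjectivity directly (and, as a byproduct, the simplicity of $x_0$, since the normal cone is the simplicial fundamental chamber). With these two repairs the proof is complete and agrees with the argument in \cite{Davis}.
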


So we can identify $W\calS$ and hence $\C(\calS, \phi)$ with the barycentric subdivision of the Coxeter polytope $\Cox(W)$. Thus identifying $|\C(\calS, \phi)|$ isometrically with $\Cox(W)$. The metric on $|\C(\calS, \phi)|$ induced by the identification with $\Cox(W)$ is called the Moussong metric. In particular for $wW_T\in W\calS$ the geometric realisation $|W\calS_{\leq wW_T}|\subset |W\calS|$ is identified with the face $w\Cox_T(W)$.

\paragraph{The General Case}
We now consider any Coxeter group $W$, so $W$ need not necessarily be finite. We put a coarser cell structure on $W\calS$ (or equivalently on $\C(\calS,\phi)$) to build the Davis-Moussong complex $\Sigma$ by identifying each subposet $(W\calS)_{\leq wW_T, T\in\calS}$, which is isomorphic to the poset $W_T(\calS_{\leq T})$, with a Coxeter polytope $\Cox(W_T)$. So we can give the following description of $\Sigma$.

\begin{Theorem}[\cite{Davis} Proposition 7.3.4.]\label{Davis} There is a natural cell structure on $\Sigma$ so that \begin{enumerate}
		\item its vertex set is $W$, its 1-skeleton is the Cayley graph $\mathrm{Cay}(W,S)$, and its 2-skeleton is a Cayley 2-complex,
		\item each cell is a Coxeter polytope,
		\item the link $\Lk(v,\Sigma)$ of each vertex is isomorphic to the abstract simplicial complex $\calS_{>\emptyset}$,
		\item a subset of $W$ is the vertex set of a cell if and only if it is a spherical coset,
		\item the poset of cells in $\Sigma$ is $W\calS$. 
	\end{enumerate}
\end{Theorem}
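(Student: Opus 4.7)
The plan is to verify the five properties essentially by unpacking the construction, since $\Sigma$ was defined precisely by coarsening $|\C(\calS,\phi)|=|W\calS|$ so that each subposet $W\calS_{\leq wW_T}$ is identified with the Coxeter polytope $\Cox(W_T)$. The key ingredient is the isomorphism of posets $W\calS_{\leq wW_T}\cong W_T\calS_{\leq T}\cong \F(\Cox(W_T))$, which already uses the translation $t_u$ sending $\Cox(W_T)$ to $\Cox_T(W)$ discussed before Lemma 7.3.3. So I would start by establishing (5): a cell of $\Sigma$ is by definition the image of such a subposet, so cells biject with elements of $wW_T\in W\calS$, and the order on cells agrees with the poset order because $wW_R\leq w'W_T$ (i.e.\ $R\subset T$ and $w'^{-1}w\in W_T$) is precisely the condition that $\Cox_R(W_T)\subset\Cox(W_T)$ under the inclusion $W_R\hookrightarrow W_T$, yielding a face embedding.

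Properties (2) and (4) are then immediate: each cell is by fiat a Coxeter polytope $\Cox(W_T)$, and the vertex set of the cell corresponding to $wW_T$ is the $W_T$-orbit of the base vertex, translated by $w$, which is identified with the coset $wW_T\subset W$. Hence the vertex sets of cells are exactly the spherical cosets.

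For (1), the $0$-cells correspond to minimal elements of $W\calS$, i.e.\ cosets $wW_\emptyset=\{w\}$, so the vertex set is $W$. The $1$-cells correspond to cosets $wW_{\{s\}}$ for $s\in S$; since $\Cox(W_{\{s\}})$ is an edge from $w$ to $ws$, the $1$-skeleton is $\mathrm{Cay}(W,S)$. The $2$-cells come from $wW_{\{s,t\}}$ with $\{s,t\}\in\calS$, i.e.\ $m(s,t)<\infty$; each such cell is a $2m(s,t)$-gon encoding the relation $(st)^{m(s,t)}=\neutre$, matching the Cayley $2$-complex of the standard Coxeter presentation.

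For (3), I would observe that cells containing the vertex $w\in W$ correspond to spherical cosets $w'W_T$ with $w\in w'W_T$, i.e.\ to cosets of the form $wW_T$ with $T\in\calS$; the local link in $\Cox(W_T)$ at its vertex indexed by the identity coset of $W_T$ is a $(|T|-1)$-simplex, and these simplices are glued according to inclusion of $T$'s, producing precisely the abstract simplicial complex $\calS_{>\emptyset}$. The main obstacle I anticipate is the well-definedness check underlying all of this: namely, when $wW_R\leq w'W_T$, the face of $\Cox(W_T)$ identified with $W\calS_{\leq wW_R}$ via the $W_T$-structure must coincide with the independent identification of $W\calS_{\leq wW_R}$ with $\Cox(W_R)$. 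This reduces to checking that the translation $t_u$ from the preceding discussion intertwines the inclusions $\Cox(W_R)\hookrightarrow\Cox(W_T)$ and $\Cox_R(W)\hookrightarrow\Cox_T(W)$ compatibly with the left $W$-action on $\Sigma$; once that compatibility is in place, the five properties follow as sketched above.
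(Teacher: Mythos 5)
Your proposal is correct and follows exactly the route the paper sets up: the paper itself gives no proof of this statement (it is quoted from Davis, Proposition 7.3.4, with the remark that most proofs in this section are omitted), but the preceding discussion — the identification of $W\calS_{\leq wW_T}$ with the face poset of $\Cox(W_T)$ via Lemma 7.3.3 and the translation $t_u$, followed by the coarsening of the cell structure on $|W\calS|$ — is precisely the argument you unpack. Your identification of the well-definedness of the face identifications (compatibility of $t_u$ with the inclusions $\Cox(W_R)\hookrightarrow\Cox(W_T)$ and the left $W$-action) as the one nontrivial point is also the right place to focus.
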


Note that the Cayley graph $\mathrm{Cay}(W,S)$ is considered to be undirected, hence there are no double edges between vertices, even though all elements of $S$ have order 2 in $W$. Furthermore all edges in $\mathrm{Cay}(W,S)$ are labeled, hence the edges of $\Sigma$ are labeled. This labeling coincides with the labeling of vertices in $\Lk(v,\Sigma)$.

Now that we have an appropriate description of $\Sigma$, let us state the following geometric property of $\Sigma$.

\begin{Theorem}[Moussong's Theorem \cite{Mousso}]
	For any Coxeter system the associated cell complex $\Sigma$, equipped with its natural piecewise Euclidean metric, is CAT(0).
\end{Theorem}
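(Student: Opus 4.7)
The plan is to apply the standard Bridson/Gromov criterion: a piecewise Euclidean cell complex with only finitely many isometry types of cells is CAT(0) if and only if it is simply connected and the link of every vertex is CAT(1). Since $\Sigma$ has finitely many isometry types of cells (the Coxeter polytopes $\Cox(W_T)$ for $T\in\calS$, and $\calS$ is finite), the task splits into two independent pieces.

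Simple connectivity is immediate from item (1) of Theorem \ref{Davis}: the 2-skeleton of $\Sigma$ is the Cayley 2-complex associated to the standard presentation of $W$, which is simply connected since it kills every defining relator. Attaching cells of dimension $\geq 3$ does not affect $\pi_1$, so $\Sigma$ is simply connected.

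The heart of the matter is to verify that each link $\Lk(v,\Sigma)$ is CAT(1). By items (3) and (5) of Theorem \ref{Davis}, this link is the abstract simplicial complex $\calS_{>\emptyset}$, and it inherits a piecewise spherical metric by cutting each adjacent Coxeter polytope $\Cox(W_T)$ with a small sphere about $v$: the resulting spherical simplex indexed by $T\in\calS$ has dihedral angles $\pi-\pi/m(s,t)$ between the faces labelled $s$ and $t$. By Fact \ref{spherical characterization}, such a spherical simplex has positive volume exactly when the cosine matrix of $W_T$ is positive definite, i.e.\ exactly when $T$ is spherical. Thus $\Lk(v,\Sigma)$ is a \emph{metric flag complex} of Coxeter type: a set $T\subset S$ spans a simplex if and only if $W_T$ is finite, and the shape of that simplex is dictated by the Coxeter matrix on $T$.

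The main obstacle is then Moussong's Lemma itself: every metric flag complex of this Coxeter type is CAT(1). One proceeds by induction on dimension, using the characterization that a piecewise spherical complex is CAT(1) iff every proper vertex link is CAT(1) and it contains no locally geodesic loop of length strictly less than $2\pi$. The induction hypothesis handles the links, which are themselves Coxeter-type metric flag complexes coming from subdiagrams of the defining Coxeter matrix, so the crux is to forbid short closed geodesics. Assuming such a loop $\gamma$, one decomposes it into arcs each lying in a maximal simplex and applies spherical trigonometry together with Fact \ref{spherical characterization} to show that the labels visited by $\gamma$ must generate a finite Coxeter group. But then $\gamma$ is already contained in a single spherical simplex, which is uniquely geodesic and so contains no closed geodesic of length $<2\pi$, a contradiction. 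The delicate case analysis of how arcs meeting at the boundaries of adjacent simplices can be glued into a short loop, and the translation of the resulting angle estimates into positive-definiteness of an associated cosine matrix, is where Moussong's original combinatorial argument is essential.
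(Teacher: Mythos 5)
Your proposal follows essentially the same route as the paper: the paper states this theorem as a cited result and notes that it follows from simple connectivity, from Lemma \ref{posdef metric} (each vertex link is a metric flag complex with simplices of size $\geq \pi/2$), and from Moussong's Lemma \ref{Moussong}, which is exactly your reduction. Your additional sketch of the proof of Moussong's Lemma itself is only a heuristic outline (the actual induction and the exclusion of short closed geodesics is substantially more delicate than ``the labels generate a finite Coxeter group''), but since both you and the paper ultimately defer that lemma to \cite{Mousso}, this does not constitute a gap relative to the paper's treatment.
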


 A simplicial complex $L$ with piecewise spherical structure has simplices of size $\geq \pi/2$ if each of its edges has length $\geq \pi/2$. Such a simplicial complex is a metric flag complex if the following condition holds: Suppose $\{v_0,\dots, v_k\}$ is a set of pairwise adjacent vertices of $L$. Put $c_{ij} = \cos(d(v_i,v_j))$. Then $\{v_0,\dots, v_k\}$ spans a simplex if and only if the matrix $(c_{ij})$ is positive definite. Then Moussong's Theorem is the consequence of the following lemmata.

\begin{Lemma}[\cite{Davis} Lemma 12.3.1.]\label{posdef metric}
	Let $\Lk$ be the link of a vertex in $\Sigma$ with its natural piecewise spherical structure inherited from $\Sigma$. Then $\Lk$ is a simplicial complex and has simplices of size $\geq \pi/2$. Moreover, it is a metric flag complex. 
\end{Lemma}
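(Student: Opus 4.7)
The plan is to exploit Theorem \ref{Davis}, which identifies $\Lk$ combinatorially with $\calS_{>\emptyset}$, together with Fact \ref{spherical characterization}, which characterises finiteness of a Coxeter group through positive definiteness of its cosine matrix. These two pieces are bridged by the identity $\cos(\pi-\pi/m)=-\cos(\pi/m)$, which aligns the ``spherical Gram matrix'' of a family of vertices in $\Lk$ with the cosine matrix of the associated Coxeter subsystem.

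First I would fix the combinatorial picture. By Theorem \ref{Davis}(3), $\Lk=\Lk(v,\Sigma)$ is isomorphic as an abstract simplicial complex to $\calS_{>\emptyset}$: its vertex set is identified with $S$, and $T\subseteq S$ spans a simplex if and only if $T\in\calS$. In particular $\Lk$ is a simplicial complex, with no multiple edges or loops. Each cell of $\Sigma$ through $v$ is a Coxeter polytope $w\Cox_T(W)$ with $T\in\calS$, and the spherical link of $v$ in this cell is a spherical simplex whose vertex set is identified with $T$. For $s,t\in S$ with $\{s,t\}\in\calS$, the corresponding edge of $\Lk$ arises from the $2$-cell $w\Cox_{\{s,t\}}(W)$ at $v$. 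This $2$-cell is isometric to $\Cox(W_{\{s,t\}})$, which is a regular $2m(s,t)$-gon (the convex hull of a generic orbit of the dihedral group of order $2m(s,t)$ in $\R^2$). The interior angle at $v$ equals $\pi-\pi/m(s,t)$, and this is the length of the corresponding edge in $\Lk$. Since $m(s,t)\geq 2$, every edge of $\Lk$ has length at least $\pi/2$.

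For the metric flag property, let $\{v_0,\dots,v_k\}$ be pairwise adjacent vertices of $\Lk$ corresponding to $T=\{s_0,\dots,s_k\}\subseteq S$, and set $c_{ij}=\cos(d(v_i,v_j))$. Then $c_{ii}=1$ and, by the previous paragraph, $c_{ij}=\cos(\pi-\pi/m(s_i,s_j))$ for $i\neq j$; this is exactly the cosine matrix of the Coxeter matrix restricted to $T$. Hence $(c_{ij})$ is positive definite if and only if $W_T$ is finite (by Fact \ref{spherical characterization}), if and only if $T\in\calS$, if and only if $\{v_0,\dots,v_k\}$ spans a simplex in $\Lk$.

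The main obstacle is the geometric step identifying each $2$-cell $w\Cox_{\{s,t\}}(W)$ with a regular $2m(s,t)$-gon and extracting the interior angle at $v$; once that calculation is in hand (it follows from the explicit canonical representation recalled in Section \ref{Coxeter}), both halves of the lemma reduce to unpacking definitions and invoking Fact \ref{spherical characterization}.
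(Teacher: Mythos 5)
Your argument is correct and follows essentially the route the paper intends: the paper states this as Lemma 12.3.1 of \cite{Davis} without reproving it, but the remark immediately following the lemma records exactly the two facts you derive (the vertex set is $S$ with $T$ spanning a simplex iff $W_T$ is finite, and $d(v,w)=\pi-\pi/m(v,w)$), and the same combination of the angle computation with Fact \ref{spherical characterization} is what the paper carries out in the general Dyer setting in the two Claims of the proof of Theorem \ref{thm:cat}. One cosmetic point: $\Cox(W_{\{s,t\}})$ is a \emph{regular} $2m(s,t)$-gon only for the symmetric choice $x_0=\alpha_s^*+\alpha_t^*$ (a generic orbit yields an equiangular but not necessarily equilateral polygon), though the interior angle $\pi-\pi/m(s,t)$, which is all you actually use, is the same in either case.
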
 

Note that using Fact \ref{spherical characterization} the set of vertices of $\Lk$ is $S$ and $T\subset S$ spans a simplex if and only if $W_T$ is finite. Moreover the distance between two vertices in $\Lk$ is given by $d(v,w) = \pi-\pi/m(v,w)$.
 
\begin{Lemma}[Moussong's Lemma \cite{Davis} Lemma I.7.4.]\label{Moussong}
	Suppose $L$ is piecewise spherical simplicial cell complex in which all cells are simplices of size $\geq \pi/2$. Then $L$ is CAT(1) if and only if it is a metric flag complex.
\end{Lemma}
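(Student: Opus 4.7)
The plan is to treat the two directions separately; the easier direction is that CAT(1) implies metric flag, while the reverse implication is the substantial one and is proved by induction on dimension.

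For the forward direction, suppose $L$ is CAT(1) and let $\{v_0,\dots,v_k\}$ be pairwise adjacent vertices whose Gram matrix $(c_{ij})$, $c_{ij}=\cos d(v_i,v_j)$, is positive definite. Positive definiteness produces an abstract spherical simplex $\sigma$ with those edge lengths. Because the $v_i$ are already pairwise joined by edges of $L$, the CAT(1) comparison applied to geodesic triangles on the edges $[v_i,v_j]$ forces the boundary pattern of $\sigma$ to sit isometrically inside $L$. An inductive application on the skeleta then forces $\sigma$ itself to embed as an actual simplex of $L$, for otherwise one locates two points whose distance in $L$ exceeds the corresponding distance in $\sigma$, contradicting the CAT(1) inequality.

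For the reverse direction I proceed by induction on $n=\dim L$. The case $n=0$ is trivial. Assume the statement for all dimensions $<n$ and let $\dim L=n$. For every vertex $v\in L$, the link $\Lk(v,L)$ is again a piecewise spherical simplicial complex whose simplices have size $\geq\pi/2$; this follows from a direct spherical-geometry calculation showing that the link of a vertex in a spherical simplex of size $\geq\pi/2$ is a spherical simplex of size $\geq\pi/2$. The metric flag condition is also inherited: a pairwise adjacent family in $\Lk(v,L)$ with positive definite Gram matrix corresponds, after adjoining $v$, to a pairwise adjacent family in $L$ whose enlarged Gram matrix remains positive definite, hence spans a simplex of $L$ by the hypothesis on $L$, producing the required simplex of $\Lk(v,L)$. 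The inductive hypothesis then yields that every $\Lk(v,L)$ is CAT(1), and by the local-to-global criterion of Bowditch and Berestovskii (the $M_1$ version of Bridson--Haefliger's theorem on piecewise $M_\kappa$ complexes) this implies that $L$ is locally CAT(1).

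The remaining and \emph{main obstacle} is to upgrade local CAT(1) to global CAT(1), which amounts to excluding isometrically embedded circles of length $<2\pi$ in $L$. Following Moussong, the strategy is to assume a closed geodesic $\gamma$ of minimal length $\ell(\gamma)<2\pi$ exists, analyze the combinatorial sequence of closed simplices that $\gamma$ traverses, and argue that the vertices adjacent to $\gamma$ along this traversal form a configuration to which the metric flag hypothesis applies, producing a higher-dimensional filling simplex. Inside this filling simplex the spherical geometry lets one homotope $\gamma$ across the simplex to a strictly shorter loop, contradicting minimality. This filling step is precisely where the metric flag hypothesis, as opposed to the weaker size-$\geq\pi/2$ condition, is essential, and it is the heart of Moussong's original argument; all the earlier steps amount to spherical bookkeeping and a standard appeal to the local-to-global machinery.
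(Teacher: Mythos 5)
First, a caveat: the paper does not prove this statement at all --- it is imported verbatim as Lemma I.7.4 of Davis's book, which in turn follows Moussong's thesis --- so there is no in-paper argument to compare yours against. What follows is therefore an assessment of your outline against the known proof.

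Your architecture is right in several places: the forward direction is the easy one; both hypotheses (simplices of size $\geq \pi/2$, metric flagness) do pass to links $\Lk(v,L)$, and your justification via the spherical law of cosines is correct; the link condition then reduces everything to excluding isometrically embedded circles of length $<2\pi$. But the step you yourself label the main obstacle is not carried out, and the mechanism you propose for it does not work as stated. A closed geodesic of length $<2\pi$ in a piecewise spherical complex of dimension $\geq 2$ need not pass near any vertex --- it can wind through the interiors of top-dimensional simplices --- so there is no canonical way to extract from it a pairwise adjacent vertex set with positive definite Gram matrix to which the metric flag hypothesis could be applied, and hence no ``filling simplex'' to homotope across. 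Asserting that such a configuration exists is precisely the content that needs proof, and it is false in the naive form you state it.

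The actual argument (Moussong; Davis, Appendix I) uses a different mechanism and a second induction. One inducts not only on $\dim L$ (which handles the links) but also on the number of cells of $L$. The key deformation lemma, which uses only the size $\geq \pi/2$ condition, shows that a closed geodesic of length $<2\pi$ can be pushed off the open star of a chosen vertex $v$ without increasing its length, unless it lies entirely in the closed star of $v$; in the latter case the geodesic is controlled by $\Lk(v,L)$, which is CAT(1) by the dimension induction, and in the former case it lies in the full subcomplex spanned by the remaining vertices, which inherits both hypotheses (metric flagness is needed here, since full subcomplexes of metric flag complexes are metric flag) and is CAT(1) by the cell-count induction. Your forward direction is likewise only gestured at, but it is salvageable by the standard ``minimal empty simplex'' argument; the reverse direction, as written, has a genuine gap exactly where you flag it, and the heuristic you substitute for it is not a correct description of how that gap is closed.
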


\subsection{Right-angled Artin groups and the Salvetti complex}\label{sec:Salvetti}
Every Coxeter group has an associated Artin group. We will concentrate on the class of right-angled Artin groups and present their analog to the Davis-Moussong complex, the Salvetti complex $S_{\Gamma}$. An extensive discussion of right-angled Artin groups can be found in Charney's survey \cite{Charne}.

Given a simplicial graph $\Gamma$, with vertex set $V$ and edge set $E$, the associated right-angled Artin group $A(\Gamma)$ is given by the following presentation $$A(\Gamma)=\langle x_v, v\in V\mid \text{ for every } e=\{v, w\}\in E, \ x_vx_w=x_wx_v \rangle.$$

If $\Gamma$ has no edges $A(\Gamma)$ is the free group of rank $|V|$, if $\Gamma$ is a complete graph $A(\Gamma)$ is the free abelian group of rank $|V|$.

\paragraph{Salvetti complex $S_{\Gamma}$} Let $\Gamma$ be a simplicial graph with vertex set $V$. For any set of pairwise adjacent vertices $V' = \{v_1, \dots, v_n\}$ consider the corresponding generators $x_i=x_{v_i}$ and let $$C(V') = \{ x_1^{\epsilon_1}\cdots x_n^{\epsilon_n}\in A(\Gamma)\mid \epsilon_i \in\{0,1\} \text{ for every } i\in \{1,\dots,n\}\}.$$ Note that for two sets $V', V''\subset V$ of pairwise adjacent vertices we have $V'\neq V''$ if and only if $C(V')\neq C(V'')$. The Salvetti complex $S_{\Gamma}$ is a cube complex with vertex set $A_{\Gamma}$ and the cubes are sets of vertices of the form $aC(V')$ for some $a\in A(\Gamma)$ and $V'\subset V$ a set of pairwise adjacent vertices of $\Gamma$. The Salvetti complex $S_{\Gamma}$ is known to be a CAT(0) cube complex by \cite{ChaDav}.

\section{The cell complex $\Sigma$}\label{Sigma}
The goal of this section is to show geometrically that Dyer groups are CAT(0) by constructing an appropriate Euclidean cell complex $\Sigma$. The first step is to construct a scwol $\C$ associated to a Dyer group. The scwol $\C$ encodes the necessary information to build $\Sigma$. The vertices of $\C$ will correspond to subcomplexes of $\Sigma$ and the edges of $\C$ will encode identifications between subcomplexes of $\Sigma$. Finally we will also be able to interpret $\C$ as a simplicial subdivision of the complex $\Sigma$. We will first focus on spherical Dyer groups $D$ which factor as a direct product of a finite Coxeter group and cyclic groups. We start with the construction of a scwol $\X$ associated to a spherical Dyer group $D$ and then define a complex of groups $\D(\X)$. The scwol $\C$ will be the development of the complex of groups $\D(\X)$. The second subsection will discuss this for general Dyer groups. The third subsection will be devoted to the Euclidean cell complex $\Sigma$.
\subsection{A combinatorial structure for spherical Dyer groups}\label{sec:spherical}
A Dyer group $D = D(\Gamma, f, m)$ is \textit{spherical} if its underlying graph $\Gamma$ is complete and the subgroup $D_2$ is a finite Coxeter group. If $D = D(\Gamma, f, m)$, we also say that $\Gamma=(\Gamma, f, m)$ is a spherical Dyer graph. In this section we will assume $D$ is a spherical Dyer group. In particular we then have $D=D_2\times D_p\times D_{\infty}$, where $D_2$ is a finite Coxeter group, $D_p$ is a direct product of finite cyclic groups and $D_{\infty} = \Z^{|V_{\infty}|}$. As with Coxeter groups, we can characterize spherical Dyer groups through the cosine matrix. Let $(\Gamma, f, m)$ be a Dyer graph and let $V = V(\Gamma)$ and $E = E(\Gamma)$. We extend the map $m: E\rightarrow \N_{\geq 2}$ to a map $m: V\times V\rightarrow \N_{\geq 2}\cup\{\infty\}$ by setting $m(u, v) = m(\{u,v\})$ if $\{u,v\}\in E$, and $m(u,v) = \infty$ if $u\neq v$ and $\{u,v\}\notin E$, and $m(u,u) = 1$. We interpret $\pi/\infty$ to be $0$ and $\cos(\pi-\pi/\infty) = \cos(\pi) = -1$. The cosine matrix associated to a Dyer graph $(\Gamma, f, m)$ is the $V\times V$ matrix $c$ defined by $c_{uv} = \cos(\pi-\pi/m(u,v))$. 
 The following characterization of spherical Dyer groups follows from Fact \ref{spherical characterization}. 

\begin{Lemma}\label{posdef}
	A Dyer group $D(\Gamma,f,m)$ is spherical if and only if the cosine matrix $c$ associated to $(\Gamma,f,m)$ is positive definite.
\end{Lemma}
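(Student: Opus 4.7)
The plan is to prove both implications by exploiting the block structure of the cosine matrix imposed by the definition of a Dyer graph, and to reduce to the Coxeter case handled by Fact~\ref{spherical characterization}.

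For the forward direction, I assume $D$ is spherical, so $\Gamma$ is complete and $D_2$ is finite. Then for every pair $u \neq v$ in $V$ at least one of which lies in $V_p \cup V_\infty$ (equivalently, has $f \geq 3$), the defining condition of a Dyer graph forces $m(\{u,v\}) = 2$; here completeness of $\Gamma$ is used to ensure $\{u,v\}$ is an edge. Hence $c_{uv} = \cos(\pi/2) = 0$ for such pairs. After reordering the vertices as $V_2 \sqcup (V_p \cup V_\infty)$, the cosine matrix therefore takes the block diagonal form
$$c = \begin{pmatrix} c_{V_2} & 0 \\ 0 & I \end{pmatrix},$$
where $c_{V_2}$ is precisely the cosine matrix of the Coxeter system $(D_2, V_2)$, and the lower block is the identity (diagonal entries are $\cos(0)=1$, off-diagonal entries are $0$). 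Since $D_2$ is finite, Fact~\ref{spherical characterization} yields that $c_{V_2}$ is positive definite, so $c$ is as well.

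For the converse, assume $c$ is positive definite. First, I argue $\Gamma$ must be complete: if $u \neq v$ were non-adjacent in $\Gamma$, then $m(u,v) = \infty$ would give $c_{uv} = -1$, and the principal $2 \times 2$ submatrix indexed by $\{u,v\}$ would be $\left(\begin{smallmatrix} 1 & -1 \\ -1 & 1 \end{smallmatrix}\right)$, which is singular, contradicting positive definiteness. Next, the $V_2 \times V_2$ principal submatrix of $c$ is exactly the cosine matrix of the Coxeter system $(D_2, V_2)$, and any principal submatrix of a positive definite matrix is positive definite; Fact~\ref{spherical characterization} then implies $D_2$ is finite. Together these two observations give that $D$ is spherical.

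The main obstacle is essentially bookkeeping; once the block structure is identified, the lemma follows directly from Fact~\ref{spherical characterization} together with elementary linear algebra (the inheritance of positive definiteness by principal submatrices, and the $2\times 2$ minor test). The slightly delicate step is verifying that the off-diagonal $V_2 \times (V_p \cup V_\infty)$ block vanishes in the forward direction, which relies crucially on combining completeness of $\Gamma$ with the Dyer graph condition that forces $m = 2$ on every edge incident to a vertex whose label is at least $3$.
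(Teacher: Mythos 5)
Your proof is correct and follows essentially the same route as the paper: both directions reduce to Fact~\ref{spherical characterization} via the observation that $c$ restricted to $V_2\times V_2$ is the cosine matrix of the Coxeter system $(D_2,V_2)$ and that the Dyer graph condition forces the complementary block to be the identity. The only (harmless) deviation is in the converse, where you derive completeness of $\Gamma$ from the singular $2\times 2$ minor $\left(\begin{smallmatrix}1&-1\\-1&1\end{smallmatrix}\right)$ rather than, as the paper does, from the finiteness of the Coxeter group attached to the full matrix $M=(m(u,v))_{u,v\in V}$.
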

\begin{proof}
	Assume $D$ is a spherical Dyer group. Then the restriction of $c$ to $V_2\times V_2$ is positive definite. Since additionally $\Gamma$ is a complete Dyer graph, this implies that the matrix $c$ is positive definite.
	Now assume the cosine matrix $c$ associated to $(\Gamma,f,m)$ is positive definite. 
	Consider the matrix $M = (m(u,v))_{u,v\in V}$. Then the cosine matrix $c$ associated to $(\Gamma,f,m)$ is equal to the cosine matrix of the Coxeter matrix $M$ as defined in Section \ref{Coxeter}. 
	So by Fact \ref{spherical characterization} the cosine matrix $c$ is positive definite if and only if the Coxeter group associated to $M$ is finite. 
	So we have $m(u,v)\neq\infty$ for all $u,v\in V$. Moreover since $\Gamma$ is a Dyer graph this also implies that the restriction of $c$ to $V_2\times V_2$ is positive definite. So the graph $\Gamma$ is complete and $D_2$ is a finite Coxeter group by Fact \ref{spherical characterization}. Hence $D$ is a spherical Dyer group.

\end{proof}

Let $\X = \X(\Gamma)$ be the scwol with set of vertices $V(\X) =  \{X\subset V\}$ and set of edges $E(\X) = \{ (X,Y,\omega) \mid X \subsetneq Y\subset V(\Gamma),\ \omega\subset (Y\setminus X)_{\infty} \}$ with $i(X,Y,\omega) = X$ and $t(X,Y,\omega) = Y$ and $(Y,Z,\omega')(X,Y,\omega) = (X,Z, \omega\cup\omega')$. We call $\X$ the scwol associated to the spherical Dyer graph $\Gamma$. Similarly to the group $D$ we can also describe $\X$ as a direct product of scwols.

\begin{Lemma}\label{product scwol}
	Let $\X_2 = \X(\Gamma_2)$, $\X_p = \X(\Gamma_p)$ and $\X_{\infty} = \X(\Gamma_{\infty})$. Then we have the product decomposition $\X = \X_2\times\X_p\times\X_{\infty}$. Moreover $\X_p = \Y_{V_p} =\times_{v\in V_{p}}\Y_{v}$ and $\X_{\infty} = \calZ_{V_{\infty}} = \times_{v\in V_{\infty}}\calZ_{v}$ as in Examples \ref{star} and \ref{circle}.
\end{Lemma}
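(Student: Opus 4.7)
The plan is to exhibit the product decomposition as an equality of scwols by unpacking the definition of $\X(\Gamma)$ and using the partition $V = V_2 \sqcup V_p \sqcup V_\infty$. Since $\Gamma$ is complete (spherical), every subset of $V$ is admissible as a vertex of $\X$, and every pair $X \subsetneq Y$ produces edges parameterised by $\omega \subset (Y\setminus X)_\infty = (Y\setminus X)\cap V_\infty$. The key point is that the $\omega$-datum only sees vertices in $V_\infty$, so the three pieces $V_2, V_p, V_\infty$ never interact in the edge data.

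First I would set up the bijection on vertices. For $X \subset V$, write $X_i = X\cap V_i$ for $i\in\{2,p,\infty\}$. The map $X \mapsto (X_2, X_p, X_\infty)$ is a bijection between $V(\X)$ and $V(\X_2)\times V(\X_p)\times V(\X_\infty) = V(\X_2\times\X_p\times\X_\infty)$. Next, for an edge $(X,Y,\omega)\in E(\X)$, decompose $X = X_2\sqcup X_p\sqcup X_\infty$ and similarly for $Y$. Since $\omega \subset (Y\setminus X)\cap V_\infty = Y_\infty\setminus X_\infty$, the datum $\omega$ lives entirely in the $\infty$-coordinate. I would send $(X,Y,\omega)$ to the triple $(\alpha_2,\alpha_p,\alpha_\infty)$ where $\alpha_i$ is the vertex $X_i=Y_i$ whenever $X_i=Y_i$, the edge $(X_2,Y_2,\emptyset)$ if $X_2\subsetneq Y_2$, the edge $(X_p,Y_p,\emptyset)$ if $X_p\subsetneq Y_p$, and the edge $(X_\infty,Y_\infty,\omega)$ if $X_\infty\subsetneq Y_\infty$ (note $\omega = \emptyset$ automatically in the contrary case). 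Since $X\subsetneq Y$, at least one component is an edge, so the triple indeed lies in $E(\X_2\times\X_p\times\X_\infty)$ as described in the formula for products of $n$ scwols. The inverse map just reassembles the three components into a single triple $(X,Y,\omega)$, and compatibility with $i$, $t$, and composition is immediate from the componentwise definitions in Definition \ref{def product}, since $\omega\cup\omega'$ correctly corresponds to union in the $V_\infty$ coordinate.

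For the second claim, I would check the identifications $\X_p = \Y_{V_p}$ and $\X_\infty = \calZ_{V_\infty}$ directly. For $\X_p$: since $V_p\cap V_\infty = \emptyset$, the set $(Y\setminus X)_\infty$ is empty for any $X\subsetneq Y\subset V_p$, so edges of $\X_p$ are exactly pairs $X\subsetneq Y\subset V_p$ with trivial $\omega$; this is the scwol associated to the poset $(\calP(V_p),\subset)$, which is $\Y_{V_p}=\prod_{v\in V_p}\Y_v$ by Example \ref{star}. For $\X_\infty$: every vertex of $V_\infty$ is labeled $\infty$, so $(Y\setminus X)_\infty = Y\setminus X$ and edges of $\X_\infty$ are triples $(X,Y,\omega)$ with $X\subsetneq Y\subset V_\infty$ and $\omega\subset Y\setminus X$; this coincides verbatim with the description of $E(\calZ_{V_\infty})$ given in Example \ref{circle}, with matching $i$, $t$, and composition.

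There is no real obstacle here — the proof is essentially a bookkeeping exercise reconciling three definitions. The only subtle point to flag carefully is that the apparent asymmetry in the definition of $E(\X)$ (the distinguished role of $V_\infty$ via $\omega$) is precisely what makes $\X_\infty$ factor as $\calZ_{V_\infty}$ rather than as $\Y_{V_\infty}$, and this is exactly what is needed for the fundamental group of $\D(\X_\infty)$ to later be $\Z^{|V_\infty|}$ rather than a product of trivial groups.
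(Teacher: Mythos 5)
Your proof is correct and follows essentially the same route as the paper: decompose each vertex $X$ as $X_2\sqcup X_p\sqcup X_\infty$ and observe that the edge datum $(X,Y,\omega)$ splits componentwise because $\omega$ lives entirely in the $V_\infty$ coordinate. You spell out the edge bijection and the identifications $\X_p=\Y_{V_p}$, $\X_\infty=\calZ_{V_\infty}$ in more detail than the paper does, but the argument is the same.
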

\begin{proof}
Since $V = V_2\sqcup V_p\sqcup V_{\infty}$, every $X\in V(\X)$ can be decomposed as a disjoint union $X = X_2\sqcup X_p\sqcup X_{\infty}$, so $V(\X) = V(\X_2)\times V(\X_p)\times V(\X_{\infty})$. For the edges note that $(X,Y,\omega) \in E(\X)$ if and only if $X_i\subset Y_i$ for every $i\in \{2,p,\infty\}$ and at least one of those inclusions is strict and $\omega\subset Y_{\infty}\setminus X_{\infty}$.
\end{proof}

We define the simple complex of groups $\D(\X)$ over the scwol $\X$. For each $X\in V(\X)$ let the local group be $D^f_X = D_{X_2\cup X_p}$. As mentioned in Remark \ref{SubDyer}, we know by \cite{Dyer} that if $X\subset Y$,  $D^f_X<D^f_Y<D$. For each edge $(X,Y,\omega)\in E(\X)$, let $\psi_{(X,Y,\omega)}: D^f_X\rightarrow D^f_Y$ be the map induced by $\psi(x_v)=x_v$ for every $v\in X_2\cup X_p$. These maps are all injective. Note that they do not depend on $\omega$. We also introduce the morphism $\phi = \phi^{\Gamma}: \D(\X)\rightarrow D$ where $\phi_X = \phi^{\Gamma}_X: D^f_X\rightarrow D$ is the natural inclusion and $\phi(X,Y,\omega) = \phi^{\Gamma}(X,Y,\omega) = \Pi_{v\in\omega}x_v$. Note that $\phi(X,Y,\omega)$ is well-defined since the subgraph spanned by $\omega\subset V_{\infty}$ is complete. Moreover $\phi(X,Y,\omega)$ only depends on $\omega$ so we will write $\phi(X,Y,\omega) = \phi(\omega) = \Pi_{v\in\omega}x_v$. Also note that each local group $D^f_X$ is finite. 

\begin{Bemerkung}\label{product complex} For every $X\in V(\X)$ the local group $D^f_X$ can be decomposed as $D^f_X = D^f_{X_2}\times D^f_{X_p}$. As $D^f_{X_{\infty}}$ is the trivial group we also have $D^f_X \cong D^f_{X_2}\times D^f_{X_p}\times D^f_{X_{\infty}}$. So using Lemma \ref{product scwol}, we have that the complex $\D(\X)$ is isomorphic to the product $\D(\X_2)\times\D(\X_p)\times\D(\X_{\infty})$. Moreover $\D(\X_p)$ is isomorphic to $\Pi_{v\in V_p}\D(\Y_v)$ and $\D(\X_{\infty})$ is isomorphic to $\Pi_{v\in V_{\infty}}\D(\calZ_v)$. The morphism $\phi = \phi^{\Gamma}$ also decomposes as a product $\phi = \phi_2\times\phi_p\times\phi_{\infty}$ where $\phi_2 = \phi^{\Gamma_2}$, $\phi_p = \phi^{\Gamma_p}$ and $\phi_{\infty} = \phi^{\Gamma_{\infty}}$.
\end{Bemerkung}

\begin{Lemma}
	The fundamental group of $\D(\X)$ is $D$ and the complex of groups $\D(\X)$ is developable.
\end{Lemma}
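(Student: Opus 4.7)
The plan is to reduce both claims to the product decomposition of $\D(\X)$ recorded in Remark \ref{product complex}, combined with the product lemma for fundamental groups and the computations of Examples \ref{Coxeter scwol}, \ref{cyclic star}, and \ref{cyclic circle}. Specifically, Lemma \ref{product scwol} and Remark \ref{product complex} give $\D(\X) \cong \D(\X_2) \times \D(\X_p) \times \D(\X_{\infty})$, with $\D(\X_p) \cong \prod_{v \in V_p}\D(\Y_v)$ and $\D(\X_{\infty}) \cong \prod_{v \in V_\infty}\D(\calZ_v)$. Applying the product lemma for fundamental groups repeatedly then yields
\[
\pi_1(\D(\X)) \;\cong\; \pi_1(\D(\X_2)) \times \pi_1(\D(\X_p)) \times \pi_1(\D(\X_{\infty})).
\]

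Next I would identify each factor. Since $V_2$ contains no vertex of infinite order, every edge of $\X_2$ has empty $\omega$-label, so $\X_2 = \Y_{V_2}$ and the local group at $X \subset V_2$ is the finite Coxeter group $D_X$; hence $\D(\X_2)$ is exactly the complex $\mathfrak{W}(\Y_{V_2})$ of Example \ref{Coxeter scwol}, whose fundamental group is $D_2$. Likewise $\X_p = \Y_{V_p}$, and the local groups of $\D(\X_p)$ are direct products of finite cyclic groups, so $\D(\X_p)$ matches Example \ref{cyclic star} and its fundamental group is $\prod_{v \in V_p} C_v = D_p$. Finally $\X_{\infty} = \calZ_{V_{\infty}}$; since $X_{\infty}$ contains no generator of finite order, $D^f_{X_\infty}$ is trivial, and $\D(\X_\infty)$ coincides with the trivial complex of Example \ref{cyclic circle}, whose fundamental group is $\Z^{|V_\infty|} = D_\infty$. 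Multiplying these out gives $\pi_1(\D(\X)) \cong D_2 \times D_p \times D_\infty = D$.

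For developability, I would simply invoke the morphism $\phi : \D(\X) \to D$ constructed just before the statement: for every $X \in V(\X)$, $\phi_X$ is the natural inclusion $D^f_X = D_{X_2 \cup X_p} \hookrightarrow D$, which is injective by Remark \ref{SubDyer}. Thus $\phi$ is a morphism to the group $D$ that is injective on every local group, which is the developability criterion we have adopted.

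The only genuine bookkeeping, which I would treat most carefully, is the identification of each product factor with the relevant earlier example: one must verify that $\omega$-decorations disappear on edges of $\X_2$ and of $\X_p$ because $V_2 \cup V_p$ contains no vertex of infinite order, and that the local groups in $\D(\X_\infty)$ are all trivial because $V_\infty$ contains no vertex of finite order. Once this matching is in place, both halves of the lemma follow at once from the product lemma for fundamental groups and from Remark \ref{SubDyer}.
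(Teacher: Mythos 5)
Your proposal is correct and follows essentially the same route as the paper: both use the product decomposition of Remark \ref{product complex} to compute $\pi_1(\D(\X)) \cong \pi_1(\D(\X_2))\times\pi_1(\D(\X_p))\times\pi_1(\D(\X_{\infty})) = D_2\times D_p\times D_{\infty} = D$, and both deduce developability from the injectivity of the inclusions $\phi_X : D^f_X \hookrightarrow D$. The only cosmetic difference is that the paper identifies $\pi_1(\D(\X_2))$ and $\pi_1(\D(\X_p))$ directly via the unique maximal vertex of a simply connected scwol, while you route through the matching with Examples \ref{Coxeter scwol} and \ref{cyclic star}; the content is the same.
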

\begin{proof}
We use the product decomposition given in Remark \ref{product complex}. The scwol $\X_2$ is associated to the poset $\calP(V_2)$ so it is simply connected. Moreover it contains a unique maximal element $V_2$ so the fundamental group of $\D(\X_2)$ is $D^f_{V_2} = D_2$. The same argument implies that the fundamental group of $\D(\X_p)$ is $D_p$. Recall that $\D(\X_{\infty})$ is isomorphic to $\Pi_{v\in V_{\infty}}\D(\calZ_v)$. The fundamental group of each $\D(\calZ_v)$ is $\Z$. So the fundamental group of $\D(\X_{\infty})$ is $\Z^{|V_{\infty}|} = D_{\infty}$. So the fundamental group of $\D(\X)$ is $D_2\times D_p\times D_{\infty} = D$. Since the maps $\phi_X$ are injective for all $X\in V(\X)$, the complex $\D(\X)$ is developable.
\end{proof}

\begin{Bemerkung} Since the complex $\D(\X)$ is developable we can describe its development $\C = \C(\X,\phi)$. Since $D^f_X<D$ and the maps $\phi_X$ are canonical inclusions, we will identify the image $\phi_X(D^f_X)$ with $D^f_X<D$. The set of vertices of $\C$ is $V(\C) = \{(gD^f_X, X)\mid X\in V(\X),\ gD^f_X\in D/D^f_X\}$. The set of edges of $\C$ is $E(\C) = \{(gD^f_X, (X,Y,\omega))\mid (X,Y,\omega)\in E(\X),\ gD^f_X\in D/D^f_X\}$ where $i(gD^f_X, (X,Y,\omega)) = (gD^f_X, X)$ and $t(gD^f_X, (X,Y,\omega)) = (g\phi(\omega)^{-1}D^f_Y, Y)$. For a simpler notation we write $gX$ for a vertex $(gD^f_X,X)$ and $g(X,Y,\omega)$ for an edge $(gD^f_X, (X,Y,\omega))$. Note that $gX = hY$ if and only if $X=Y$ and $g^{-1}h\in D^f_X$. Similarly $g(X,Y,\omega) = h(X',Y', \omega')$ if and only if $X'= X$, $Y' = Y$, $\omega' = \omega$ and $g^{-1}h\in D^f_X$. In particular $\X$ is the quotient of $\C$ by the action of the group $D$.
\end{Bemerkung}

\begin{Lemma}\label{product development}
	The development $\C(\X,\phi)$ has a product decomposition \\ $\C(\X_2,\phi_2)\times \C(\X_p,\phi_p)\times \C(\X_{\infty},\phi_{\infty})$.
\end{Lemma}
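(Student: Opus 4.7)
The plan is to build an explicit isomorphism of scwols between $\C(\X,\phi)$ and the product $\C(\X_2,\phi_2)\times\C(\X_p,\phi_p)\times\C(\X_\infty,\phi_\infty)$, using the product decompositions already established: $\X=\X_2\times\X_p\times\X_\infty$ from Lemma \ref{product scwol}, $\D(\X)\cong\D(\X_2)\times\D(\X_p)\times\D(\X_\infty)$ and $\phi=\phi_2\times\phi_p\times\phi_\infty$ from Remark \ref{product complex}, together with the group factorization $D=D_2\times D_p\times D_\infty$. Because $D^f_X=D^f_{X_2}\times D^f_{X_p}\times D^f_{X_\infty}$ (with the last factor trivial), every coset $gD^f_X$ decomposes uniquely as $(g_2 D^f_{X_2},\,g_p D^f_{X_p},\,g_\infty D^f_{X_\infty})$, which is the key structural fact driving the argument.

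First I would define the map on vertices by sending $(gD^f_X,X)$ to $((g_2 D^f_{X_2},X_2),(g_p D^f_{X_p},X_p),(g_\infty D^f_{X_\infty},X_\infty))$ and verify well-definedness and bijectivity directly from the coset identification $gX=hX\iff g^{-1}h\in D^f_X$ applied coordinate-wise. Next, for edges, I would use the explicit description of $E(\X_2\times\X_p\times\X_\infty)$ from the iterated product of scwols (recalled after Definition \ref{def product}): an edge $(X,Y,\omega)\in E(\X)$ has $X_i\subset Y_i$ for $i\in\{2,p,\infty\}$, at least one inclusion strict, and $\omega\subset Y_\infty\setminus X_\infty$; I map it componentwise to the triple whose $i$-th coordinate is the edge $(X_i,Y_i)$ (resp.\ $(X_\infty,Y_\infty,\omega)$ when $i=\infty$) if $X_i\subsetneq Y_i$ and the vertex $X_i$ otherwise. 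This extends to edges of the development via $(gD^f_X,(X,Y,\omega))\mapsto$ the corresponding triple of edges/vertices with coset data $(g_i D^f_{X_i},\cdot)$ in each coordinate.

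The crucial compatibility is that the terminal map $t$ in $\C(\X,\phi)$ uses $\phi(\omega)=\prod_{v\in\omega}x_v$, and since $\omega\subset V_\infty$ we have $\phi(\omega)\in D_\infty$. Thus the ``twist'' $g\mapsto g\phi(\omega)^{-1}$ affects only the $\infty$-component, giving $(g_2,g_p,g_\infty\phi(\omega)^{-1})$; this matches exactly the terminal map of $\C(\X_\infty,\phi_\infty)$ on the $\infty$-coordinate while leaving the other two untouched, which is what the product scwol's $t$ map prescribes when the first two components are vertices. I would then verify that the composition $(Y,Z,\omega')\circ(X,Y,\omega)=(X,Z,\omega\cup\omega')$ also factors through the product, essentially because $\omega\cup\omega'\subset V_\infty$ and the compatibility condition $gD^f_Y=h\phi(\omega)^{-1}D^f_Y$ reduces, by the coordinate splitting above, to the three componentwise compatibility conditions required to compose in each factor.

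The main obstacle, if any, is purely bookkeeping: matching the edge set of the product scwol, which is a disjoint union indexed by nonempty subsets $S\subset\{2,p,\infty\}$ of ``edge-coordinates,'' with the seemingly monolithic edge set $E(\X)$. This is handled cleanly by observing that the choice of $S$ is recovered from the edge $(X,Y,\omega)$ as $S=\{i:X_i\subsetneq Y_i\}$, and the datum $\omega$ is nontrivial only if $\infty\in S$. Once this correspondence is in place, checking that $i$, $t$, and composition commute with the bijection is routine, and the inverse map is the obvious componentwise reassembly, completing the isomorphism of scwols.
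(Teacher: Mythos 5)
Your proposal is correct and follows the same route as the paper, which simply asserts that the lemma ``follows from the product decomposition of $\X$, $\D(\X)$, $D$ and $\phi$''; you have merely written out in full the componentwise verification that this one-line proof leaves implicit. The key observation you make explicit --- that the twist $g\mapsto g\phi(\omega)^{-1}$ lives entirely in the $D_\infty$-factor and therefore only affects the $\infty$-coordinate of the coset data --- is exactly the point on which the paper's implicit argument rests.
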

\begin{proof}
This follows from the product decomposition of $\X$, $\D(\X)$, $D$ and $\phi$.
\end{proof}

\begin{Bemerkung}
	For $i\in\{2, p, \infty\}$, Lemma \ref{product development} implies that we can consider each scwol $\C(\X_i,\phi_i)$ to be a subscwol of $\C(\X,\phi)$. There is a canonical inclusion by identifying a vertex $gX\in \C(\X_i,\phi_i)$ with $gX\in\C(\X,\phi)$. The subscwol $\C(\X_i,\phi_i)$ is then stable under the action of $D_i$.
\end{Bemerkung}

\paragraph{Links and stars in $\C$}
For a well-defined construction of the cell complex $\Sigma$ we need to understand the local combinatorial structure of $\C$. As edges in $\C(\X,\phi)$ are oriented we will distinguish the incoming and the outgoing link and star of a vertex. Let $gY\in V(\C)$. The incoming link $\Lk_{in}(gY, \C)$ is the full subscwol of $\C$ spanned by the vertices $\{hZ \mid \exists e\in E(\C): t(e) = gY \text{ and } i(e) = hZ \}$. Similarly the outgoing link $\Lk_{out}(gY, \C)$ is the full subscwol of $\C$ spanned by the vertices $\{hZ \mid \exists e\in E(\C): i(e) = gY \text{ and } t(e) = hZ \}$.
The incoming star is the subscwol spanned by $gY$ and its incoming link so it is the combinatorial join $$\St_{in}(gY, \C) = \{gY\}\star\Lk_{in}(gY, \C)$$ and the outgoing star is defined similarly $$\St_{out}(gY, \C) = \{gY\}\star\Lk_{out}(gY, \C).$$

\begin{Bemerkung}
	$\St_{in}(gY, \C)$ is isomorphic to $\St_{in}(\neutre Y, \C(\X(\Gamma_Y),\phi^{\Gamma_Y}))$. Moreover the product decomposition of $\C$ induces a product decomposition of the incoming star $$\St_{in}(\neutre Y, \C) = \St_{in}(\neutre Y_2, \C(\X_2,\phi_2))\times\St_{in}(\neutre Y_p, \C(\X_p,\phi_p))\times\St_{in}(\neutre Y_{\infty}, \C(\X_{\infty},\phi_{\infty}))$$ and as such also a product decomposition for every $\St_{in}(gY, \C)$. Moreover for a vertex $hZ\in \St_{in}(gY, \C)$, the star $\St_{in}(hZ, \C)$ is a subscwol of $\St_{in}(gY, \C)$.
\end{Bemerkung}

\subsection{A combinatorial structure for general Dyer groups}\label{sec:general}

Let us now give a similar construction with analogous results for general Dyer groups. Let $(\Gamma, f, m)$ be a Dyer graph and $D=D(\Gamma)$ be the associated Dyer group. We note $V = V(\Gamma)$. Let $\X = \X(\Gamma)$ be the scwol with set of vertices $V(\X) =  \{X\subset V \mid D(\Gamma_X) \text{ is a spherical Dyer group}\}$ and set of edges $E(\X) = \{ (X,Y,\omega) \mid X, Y\in V(\X),\ X \subsetneq Y,\ \omega\subset (Y\setminus X)_{\infty} \}$ with $i(X,Y,\omega) = X$ and $t(X,Y,\omega) = Y$ and $(Y,Z,\omega')(X,Y,\omega) = (X,Z, \omega\cup\omega')$. The main difference with the spherical case, is the set of vertices of $\X$. Indeed we do not consider all subsets $X\subset V$ but only those for which $\Gamma_X$ is complete and the group $D^f_X = D_{X_2\cup X_p}$ is finite. We also define a complex of groups $\D(\X)$ over $\X$. For each $X\in V(\X)$ let the local group be $D^f_X = D_{X_2\cup X_p}$ and for each edge $(X,Y,\omega)\in E(\X)$, let $\psi_{(X,Y,\omega)}: D^f_X\rightarrow D^f_Y$ be the natural inclusion. By \cite{Dyer}, these maps are all injective. The local groups are all finite. We also introduce the morphism $\phi: \D(\X)\rightarrow D$ where $\phi_X: D^f_X\rightarrow D$ is the natural inclusion and $\phi(X,Y,\omega) = \phi(\omega) = \Pi_{v\in\omega}x_v$ (this element is well defined since $\omega\subset V_{\infty}$ and $\Gamma_{\omega}$ is complete). As in the spherical case we can write $\D(\X(\Gamma))$ and $\phi^{\Gamma}$ when also considering the same construction on a subgraph.
As before we are interested in the development of the complex of groups $\D(\X)$ so we first show that $\D(\X)$ is developable.

\begin{Beispiel}
	Consider the Dyer graph $\Gamma_{m,p}$, given again in Figure \ref{fig1rep}, and the Dyer group $D_{m,p}$ from Example \ref{nontrivial example}. The associated scwol $\X_{m,p}$ is drawn in Figure \ref{exscwol}. Its vertex set is $V(\X_{m,p}) = \{\emptyset, \{a\}, \{b\}, \{c\}, \{d\}, \{a,b\}, \{b,c\}, \{c, d\}\}$.
\end{Beispiel}

\begin{figure}[h]
	\centering
	\begin{tikzpicture}

 \draw[fill, blue] (0,0) circle [radius=0.05];
 \draw[fill, blue] (1,0) circle [radius=0.05];
 \draw[fill,blue] (2,0) circle [radius=0.05];
 \draw[fill,blue] (3,0) circle [radius=0.05];
 \draw(0,0)node[above,blue]{$\infty$}node[below,blue]{$a$}--node[midway, above]{$2$}(1,0)node[above,blue]{$2$}node[below,blue]{$b$};
 \draw(1,0)--node[midway, above]{$m$}(2,0)node[above,blue]{$2$}node[below,blue]{$c$};
 \draw(2,0)--node[midway, above]{$2$}(3,0)node[above,blue]{$p$}node[below,blue]{$d$};
 
 \end{tikzpicture}
	\caption{Dyer graph $\Gamma_{m,p}$ for some $m, p\in\N_{\geq 2}$ as given in Figure \ref{fig1}}
	\label{fig1rep}
\end{figure}
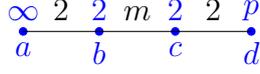

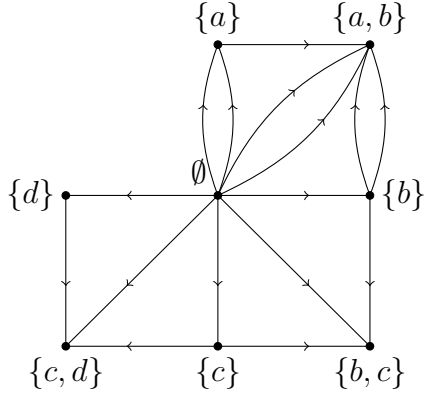
\begin{figure}[h]
	\centering
	\begin{tikzpicture}[scale=2]
\coordinate (empty) at (0,0) ;
\coordinate (a) at (0,1) ;
\coordinate (b) at (1,0) ;
\coordinate (c) at (0,-1) ;
\coordinate (d) at (-1,0) ;
\coordinate (aetb) at (1,1) ;
\coordinate (betc) at (1,-1) ;
\coordinate (cetd) at (-1,-1) ;

\draw[fill] (empty) circle [radius=0.025]node[above left]{$\emptyset$};
\draw[fill] (a) circle [radius=0.025] node[above]{$\{a\}$};
\draw[fill] (b) circle [radius=0.025]node[right]{$\{b\}$};
\draw[fill] (c) circle [radius=0.025]node[below]{$\{c\}$};
\draw[fill] (d) circle [radius=0.025]node[left]{$\{d\}$};
\draw[fill] (aetb) circle [radius=0.025]node[above]{$\{a,b\}$};
\draw[fill] (betc) circle [radius=0.025]node[below]{$\{b,c\}$};
\draw[fill] (cetd) circle [radius=0.025]node[below]{$\{c,d\}$};

\draw[->-] (empty)--(d);
\draw[->-] (empty)--(c);
\draw[->-] (empty)--(b);
\draw[->-] (empty)--(cetd);
\draw[->-] (empty)--(betc);
\draw[->-] (d)--(cetd);
\draw[->-] (c)--(cetd);
\draw[->-] (c)--(betc);
\draw[->-] (b)--(betc);
\draw[->-] (a)--(aetb);
\draw[->-] (empty)to[bend left=20](a);
\draw[->-] (empty)to[bend right=20](a);
\draw[->-] (empty)to[bend left=20](aetb);
\draw[->-] (empty)to[bend right=20](aetb);
\draw[->-] (b)to[bend left=20](aetb);
\draw[->-] (b)to[bend right=20](aetb);

\end{tikzpicture}
	\caption{The scwol $\X_{m,p}$ associated to the graph $\Gamma_{m,p}$ given in Figure \ref{fig1rep}.}
	\label{exscwol}
\end{figure}

\begin{Lemma}\label{metric}
	The scwol $\X$ is isomorphic to the union of scwols $\Y = \bigcup_{Y\in V(\X)} \X_Y$, where $\X_Y$ is the scwol associated to the spherical Dyer group $D_Y$. The fundamental group of $\D(\X)$ is $D$. In particular, the complex of groups $\D(\X)$ is developable.
\end{Lemma}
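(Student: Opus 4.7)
The plan is to prove the lemma in three steps. First, I would verify the scwol identification $\X = \bigcup_{Y \in V(\X)} \X_Y$. For the inclusion $\bigcup_Y \X_Y \subseteq \X$, observe that if $Y$ is a spherical subset and $X \subseteq Y$, then $\Gamma_X$ is a full subgraph of the complete graph $\Gamma_Y$ and hence complete, while $D^f_X \leq D^f_Y$ is a subgroup of a finite group, hence finite, so $X$ is itself spherical and $X \in V(\X)$; edges of $\X_Y$ transfer to $\X$ identically. Conversely, every vertex $X \in V(\X)$ lies in $\X_X$ and every edge $(X, Y, \omega) \in E(\X)$ lies in $\X_Y$ by construction. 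This gives the first assertion.

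Next, I would identify $\pi_1(\D(\X))$ with $D$ via the generators-and-relations presentation recalled in Section \ref{sec:scwols}. Choose a spanning tree $T$ of the underlying graph of $\X$ consisting of the edges $(\emptyset, Y, \emptyset)$ for non-empty $Y \in V(\X)$. The morphism $\phi:\D(\X) \to D$ induces $\phi_*: \pi_1(\D(\X), T) \to D$, and I would build an explicit inverse $\psi: D \to \pi_1(\D(\X), T)$ by setting $\psi(x_v) = x_v$, viewed in the local group $D^f_{\{v\}}$, when $v \in V_2 \cup V_p$, and $\psi(x_v) = (\emptyset, \{v\}, \{v\})^+$ when $v \in V_\infty$. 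Then I would verify that the defining relations of $D$ hold in $\pi_1(\D(\X), T)$: the torsion relations $x_v^{f(v)} = \neutre$ are inherited from the local group $D^f_{\{v\}}$, and the braid/commutation relations attached to an edge $\{u, v\} \in E$ hold inside the local group $D^f_{\{u,v\}}$ whenever both $u, v \in V_2 \cup V_p$. In the remaining cases at least one of $u, v$ lies in $V_\infty$, which by the definition of a Dyer graph forces $m(e) = 2$, and the required commutation $\psi(x_u)\psi(x_v) = \psi(x_v)\psi(x_u)$ is then obtained inside the spherical subscwol $\X_{\{u,v\}}$ by decomposing the non-tree edges $(X, \{u,v\}, \omega)$ through $T$ using the composition rule $(Y, Z, \omega')^+(X, Y, \omega)^+ = (X, Z, \omega \cup \omega')^+$ and invoking the conjugation relations $\psi_\alpha(s) = \alpha^+ s \alpha^-$, which collapse to commutation because each $\psi_\alpha$ is the inclusion. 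Both $\phi_* \circ \psi$ and $\psi \circ \phi_*$ then act as the identity on generators, so they are inverse isomorphisms.

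Finally, developability is immediate from the very construction of $\phi$ given before the statement of the lemma: each $\phi_X$ is the canonical inclusion $D^f_X \hookrightarrow D$, hence injective on local groups, which fulfills the definition of developability recalled earlier in the section.

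The main obstacle is the bookkeeping in the middle step, specifically verifying the commutation relations when one or both endpoints of an edge of $\Gamma$ lie in $V_\infty$. The sharpest instance is $u, v \in V_\infty$: here neither $x_u$ nor $x_v$ lies in any local group, and one must observe that the element $(\emptyset, \{u, v\}, \{u, v\})^+$ can be decomposed in two ways along $\X_{\{u, v\}}$, yielding both $\psi(x_u)\psi(x_v)$ and $\psi(x_v)\psi(x_u)$ and hence forcing their equality. Once this dictionary between edge elements of $\X$ and generators of $D$ is in place, all the remaining verifications are routine.
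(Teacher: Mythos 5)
Your proposal is correct, and the first and last steps (the identification $\X=\bigcup_{Y\in V(\X)}\X_Y$ by comparing vertex and edge sets, and developability from injectivity of the inclusions $\phi_X$) coincide with the paper's argument. The middle step, however, takes a genuinely different route. The paper applies a Seifert--van Kampen argument to the covering of $\Y$ by the spherical pieces $\X_Y$: it observes that $V(\X)$ is finite, that each $\X_Y$ is connected, and that every $\X_Y$ contains $\emptyset$ with $\emptyset$ adjacent to all other vertices, so the intersection of the pieces is nonempty and connected; the fundamental group of each piece $\D(\X_Y)$ is already known to be $D_Y$ from the spherical case (via the product decomposition $\D(\X_2)\times\D(\X_p)\times\D(\X_\infty)$ and the product lemma for fundamental groups), and comparing the resulting amalgamated presentation with the standard presentation of $D$ gives the claim. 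You instead compute $\pi_1(\D(\X),T)$ directly from Haefliger's presentation with the star-shaped spanning tree $T=\{(\emptyset,Y,\emptyset)\}$, building explicit mutually inverse homomorphisms to and from $D$; the key point you correctly isolate is that the non-tree edge elements collapse onto the generators $(\emptyset,\{v\},\{v\})^+$ via the composition rule and the relations $\psi_\alpha(s)=\alpha^+s\alpha^-$, with the two decompositions of $(\emptyset,\{u,v\},\{u,v\})^+$ forcing the commutation of $\psi(x_u)$ and $\psi(x_v)$ when $u,v\in V_\infty$. (For this to apply you also need each edge $\{u,v\}$ of $\Gamma$ to give a vertex of $\X$, which holds because the Dyer graph condition forces $m(e)=2$ unless $u,v\in V_2$, so $D^f_{\{u,v\}}$ is always finite.) Your approach is more self-contained and elementary --- it does not invoke Seifert--van Kampen for complexes of groups nor the spherical-case computation --- at the price of heavier bookkeeping; the paper's approach is shorter at the top level but leans on the earlier product lemma and on the reader supplying the presentation comparison.
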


\begin{proof}
	First we compare the sets of vertices. If $Y\in V(\X)$ then $Y\in V(\X_Y)$ so $Y\in V(\Y)$. On the other hand if $Y\in V(\Y)$, we have $Y\in V(\X_Z)$ for some $Z\in V(\X)$ so $Y\subset Z$ so $D_Y$ is spherical. This implies $V(\X) = V(\Y)$. Now we compare the sets of edges. If $e = (X, Y, \omega) \in E(\X)$ then $e\in E(\X_Y)$ so $e\in E(\Y)$. On the other hand if $e \in V(\Y)$, $e\in V(\X_Z)$ for some $Z\in V(\X)$ so $e = (X, Y, \omega)$ with $X\subsetneq Y\subset Z$ and $\omega \subset(Y\setminus X)_{\infty}$ so $e\in E(\X)$. This implies $E(\X) = E(\Y)$.
	We can now apply Seifert-van Kampen to $\Y$. The set $V(\X)$ is finite and each scwol $\X_Y$ is connected. We have $\emptyset\in V(\X_Y)$ for all $Y\in V(\X)$ and $\emptyset$ is adjacent to any vertex in any $\X_Y$. So $\bigcap_{Y\in V(\X)} \X_Y$ is nonempty and connected. We can then use the presentations to see that the fundamental group of $\D(\X)$ is $D$. Finally, by \cite{Dyer}, the maps $\phi_X: D^f_X\rightarrow D$ are all injective. So $\D(\X)$ is developable.
\end{proof}

\begin{Bemerkung}
	Since the complex $\D(\X)$ is developable we can describe its development $\C = \C(\X,\phi)$. Since $D^f_X<D$ and the maps $\phi_X$ are canonical inclusions, we will identify the image $\phi_X(D^f_X)$ with $D^f_X$. The set of vertices of $\C$ is $V(\C) = \{(gD^f_X, X)\mid X\in V(\X),\ gD^f_X\in D/D^f_X\}$. The set of edges of $\C(\X, \phi)$ is $E(\C) = \{(gD^f_X, (X,Y,\omega))\mid (X,Y,\omega)\in E(\X),\ gD^f_X\in D/D^f_X\}$ where \linebreak $i(gD^f_X, (X,Y,\omega)) = (gD^f_X, X)$ and $t(gD^f_X, (X,Y,\omega)) = (g\phi(\omega)^{-1}D^f_Y, Y)$. For a simpler notation we write $gX$ for a vertex $(gD^f_X,X)$ and $g(X,Y,\omega)$ for an edge $(gD^f_X, (X,Y,\omega))$. Note that $gX = hY$ if and only if $X=Y$ and $g^{-1}h\in D^f_X$. Similarly $g(X,Y,\omega) = h(X',Y', \omega')$ if and only if $X'= X$, $Y' = Y$, $\omega' = \omega$ and $g^{-1}h\in D^f_X$. As in the spherical case $\C$ does not have multiple edges between two vertices.
\end{Bemerkung}

\paragraph{Links and stars in $\C$} As before we need to understand the local combinatorial strucuture of $\C$. As edges in $\C$ are oriented we will distinguish the incoming and the outgoing link and star of a vertex. We recall the definitions here. Let $gY\in V(\C)$. The incoming link $\Lk_{in}(gY, \C)$ is the full subscwol of $\C$ spanned by the vertices $\{hZ \mid \exists e\in E(\C): t(e) = gY \text{ and } i(e) = hZ \}$. Similarly the outgoing link $\Lk_{out}(gY, \C)$ is the full subscwol of $\C$ spanned by the vertices $\{hZ \mid \exists e\in E(\C): i(e) = gY \text{ and } t(e) = hZ \}$.
The incoming star is the subscwol spanned by $gY$ and its incoming link so it is the combinatorial join $$\St_{in}(gY, \C) = \{gY\}\star\Lk_{in}(gY, \C)$$ and the outgoing star is defined similarly $$\St_{out}(gY, \C) = \{gY\}\star\Lk_{out}(gY, \C).$$

\begin{Lemma}
	For every vertex $gY\in V(\C)$, the scwols $\St_{in}(gY, \C(\X,\phi))$ and $\St_{in}(\neutre Y, \C(\X_Y,\phi^{\Gamma_Y}))$ are isomorphic.
\end{Lemma}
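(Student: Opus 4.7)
My plan is to construct an explicit scwol isomorphism and check the standard bijection and compatibility properties. The first simplification: since $D$ acts on $\C(\X,\phi)$ by scwol automorphisms and left translation by $g$ sends $\St_{in}(\neutre Y, \C(\X,\phi))$ isomorphically onto $\St_{in}(gY, \C(\X,\phi))$, it suffices to treat the case $g=\neutre$. This reduces the statement to proving that $\St_{in}(\neutre Y, \C(\X,\phi))$ and $\St_{in}(\neutre Y, \C(\X_Y,\phi^{\Gamma_Y}))$ are isomorphic.

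Next I would use that $Y\in V(\X)$ means $D_Y$ is spherical, so every subset $X\subseteq Y$ also yields a spherical Dyer group and hence lies in $V(\X_Y)\subseteq V(\X)$. This shows that the edges of $\X$ terminating at $Y$, namely $\{(X,Y,\omega)\mid X\subsetneq Y,\ \omega\subset (Y\setminus X)_\infty\}$, coincide with the edges of $\X_Y$ terminating at $Y$. I would then define $\Psi$ on vertices by $hX\mapsto hX$, where $h\in D_Y$ on the left and $h$ is viewed in $D$ on the right via the inclusion $D_Y\hookrightarrow D$ from Remark \ref{SubDyer}, and similarly $h(X,X',\omega)\mapsto h(X,X',\omega)$ on edges.

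The verification splits into three parts. Well-definedness and injectivity on vertices rest on the fact that the subgroup $D^f_X\subseteq D_Y$ is realized as $D^f_X\subseteq D$ via the canonical inclusion, so the $D^f_X$-coset of an element $h\in D_Y$ is unambiguous. For surjectivity, I would check that any vertex of $\St_{in}(\neutre Y, \C(\X,\phi))$ other than $\neutre Y$ is the initial vertex of some edge $k(X,Y,\omega)$ whose terminal vertex is $\neutre Y$; this forces $k\phi(\omega)^{-1}\in D^f_Y$, hence $k\in D^f_Y\phi(\omega)\subseteq D_Y$, so $kX$ is in the image. The analogous argument handles the edges of $\St_{in}$ (both the edges in the link and those joining the link to $\neutre Y$). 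Preservation of source, target, and composition follows formally, since the defining formulas for $\C(\X,\phi)$ and $\C(\X_Y,\phi^{\Gamma_Y})$ are identical on data involving $Y$ and its subsets, and $\phi(\omega)$ agrees with $\phi^{\Gamma_Y}(\omega)$ under $D_Y\hookrightarrow D$.

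The main obstacle, although not deep, is the surjectivity step: I must make sure that every incoming vertex to $\neutre Y$ in $\C(\X,\phi)$ is represented by an element of $D_Y$, not merely of $D$. This is where one uses that $\phi(\omega)\in D_Y$ (since $\omega\subset Y_\infty$) and that $D^f_Y\subseteq D_Y$, so the constraint $k\phi(\omega)^{-1}\in D^f_Y$ automatically forces $k\in D_Y$. Once this is in hand, the remaining checks are routine.
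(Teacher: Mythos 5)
Your proof is correct and follows the same route as the paper: reduce to $g=\neutre$ via the isometric action of $D$ on $\C(\X,\phi)$, and then identify the two stars directly from the definitions. The paper dismisses the second step as immediate, whereas you spell out the one point that actually needs checking (that every incoming vertex $kX$ of $\neutre Y$ admits a representative $k\in D^f_Y\phi(\omega)\subseteq D_Y$), which is a worthwhile addition but not a different argument.
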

\begin{proof}
	It suffices to show this for $g=\neutre$. Then the statement is clear as it follows directly from the property of the definitions.
\end{proof}

\subsection{The piecewise Euclidean cell complex $\Sigma$}\label{sec:sigma}
The scwol $\C$, which is also a simplicial complex, described in the previous section is a combinatorial object. To build the cell complex $\Sigma$ we could try to endow the geometric realization of $\C$ with a CAT(0) metric. This would give a simplicial complex with a non-standard piecewise Euclidean metric. The problem is that Moussong's Lemma \ref{Moussong} does not apply directly to simplicial complexes with a piecewise Euclidean metric since dihedral angles should be at least $\pi/2$. The idea is to interpret $\C$ as some generalized face poset of $\Sigma$. Indeed $\C$ does not give us the face structure of $\Sigma$ but some form of subcomplex structure. Each vertex in $\C$ corresponds to a  subcomplex of $\Sigma$ and edges give identifications between these subcomplexes. Nevertheless we will be able to interpret $\C$ as a simplicial subdivision of $\Sigma$. We start with the description and study of the subcomplexes associated to vertices, then build $\Sigma$ and finally show that $\Sigma$ is CAT(0) using Moussong's Lemma \ref{Moussong}.

Let $(\Gamma, f, m)$ be a Dyer graph, $D = D(\Gamma,f, m)$ the associated Dyer group, $\X = \X(\Gamma)$ the associated scwol and $\D(\X)$ the associated complex of groups. Consider the injective morphism $\phi : \D(\X)\rightarrow D$ given by the natural inclusion $\phi_X : D^f_X\rightarrow D$ and $\phi(X,Y,\omega)= \phi(\omega) = \Pi_{v\in\omega}x_v$. As in the previous section we construct the development $\C = \C(\X,\phi)$.

\paragraph{Elementary building blocks} Let $Y\in V(\X)$. First we consider elementary building blocks in the cases $Y=Y_2$, $Y=Y_{\infty}$ and $Y=Y_p$. For $Y\in V(\X)$ with $Y=Y_2$ let $\Cox(Y)$ be the Coxeter polytope associated to the Coxeter group $D_Y$ endowed with its natural Euclidean metric as described in Section \ref{Coxeter}. Its set of vertices is $D_{Y}$. For $Y\in V(\X)$ with $Y = Y_{\infty}$ consider $\Pi_{v\in Y}[0,1] = [0,1]^{|Y|}\subset \R^{|Y|}$ with its standard cubical structure. Its set of vertices is $\calP(Y)$, where $0\in\R^{|Y|}$ corresponds to $\emptyset\in\calP(Y)$. For $v\in V_p$ let $\stern(v)$ be the $f(v)$-branched star where each edge of the star is identified with $[0,1]$. Its center is denoted $c_v$ and its tips are identified with the elements of $C_{f(v)}$ the finite cyclic group of order $f(v)$. For $Y\in V(\X)$ with $Y = Y_p$ let $\stern(Y)$ be the product of stars $\Pi_{v\in Y}\stern(v)$ endowed with the $\ell_2$ metric. So its vertex set is $\Pi_{v\in Y}(\{c_v\}\cup C_{f(v)})$. Note that $V(\stern(Y)) = \Pi_{v\in Y}(\{c_v\}\cup C_{f(v)})$ can be identified with  $\bigcup_{Z\subset Y} D_{Y}/D_Z$. Indeed we have $\{c_v\}\cup C_{f(v)} = D_{\{v\}}/D_{\{v\}} \cup D_{\{v\}}/D_{\emptyset}$. Let us denote a vertex $gD_Z\in D_{Y}/D_Z$ in $\stern(Y)$ with $gZ$.

\paragraph{The cell complex $\rC(Y)$} To every $Y\in V(\X)$ we associate a Euclidean cell complex $\rC(Y)$ as follows. Let $\rC(Y)$ be the product $\Cox(Y_2)\times[0,1]^{|Y_{\infty}|}\times\stern(Y_p)$ endowed with the $\ell_2$ metric. Each of its factors is piecewise Euclidean, so it is a piecewise Euclidean cell complex. In particular $\rC(Y)=\rC(Y_2)\times\rC(Y_{\infty})\times \rC(Y_p)$. The set of vertices of $\rC(Y)$ is $D_{Y_2}\times\calP(Y_{\infty})\times\Pi_{v\in Y_p}(\{c_v\}\cup C_{f(v)})$. The group $D^f_Y$ acts by isometries on $\rC(Y)$. Indeed $D^f_Y = D_{Y_2}\times\Pi_{v\in Y_p}C_{f(v)}$. So $D^f_Y$ acts through $D_{Y_2}$ on $\Cox(Y_2)$ and through $C_{f(v)}$ on $\stern(v)$. These actions are all isometries.

\begin{Bemerkung}[Links of vertices]\label{links of blocks} As we will need to understand links of vertices in subcomplexes we start by studying links of vertices in $\rC(Y)$.
	Consider a vertex $l=(w, \lambda, gZ)\in \rC(Y)$. Its link is the join $$\Lk(w,\Cox(Y_2))\star\Lk(\lambda, [0,1]^{|Y_{\infty}|})\star\Lk(gZ, \stern(Y_p)).$$ The link $\Lk(gZ, \stern(Y_p))$ is isometric to $ \Lk(Z, \stern(Y_p))$ which is a join  $$\star_{v\in Z}\Lk(c_v,\stern(v))\star_{v\in Y_p\setminus Z}\Lk(\neutre,\stern(v)).$$ Each component $\Lk(c_v,\stern(v))$ consists of $|C_{f(v)}|$ disjoint vertices and each component $\Lk(\neutre,\stern(v))$ consists of one vertex. We can decompose \linebreak $\Lk(\lambda, [0,1]^{|Y_{\infty}|}) = \left(\star_{v\in\lambda}\lambda\setminus\{v\}\right)\star\left(\star_{v\in Y_{\infty}\setminus\lambda} \lambda\cup\{v\}\right)$. So the link $\Lk(l, \rC(Y))$ is \begin{multline*} \Lk(w,\Cox(Y_2))\star\left(\left(\star_{v\in\lambda}\lambda\setminus\{v\}\right)\star\left(\star_{v\in Y_{\infty}\setminus\lambda} \lambda\cup\{v\}\right)\right) \\ \star(\star_{v\in Z}\Lk(c_v,\stern(v)))\star(\star_{v\in Y_p\setminus Z}\Lk(e,\stern(v))),\end{multline*} where the length of an edge between two components of any join is $\pi/2$ and $\Lk(w,\Cox(Y_2))$ is the flag complex over $\Gamma_{Y_2}$ with edge length $d(u,v) = \pi-\pi/m(u,v)$ by Section \ref{Coxeter}.
\end{Bemerkung}

\begin{Beispiel}
	Let $m, p\in \N_{\geq 2}$. We go back to the example of the Dyer graph $\Gamma_{m,p}$ with associated Dyer group $D_{m,p}$ and scwol $\X_{m,p}$ given in Figure \ref{fig1}, Example \ref{nontrivial example} and Figure \ref{exscwol}. Figure \ref{blocks} shows the cell complexes $\rC(\{a,b\})$, $\rC(\{b,c\})$, $\rC(\{c,d\})$ in the case $m=4$ and $p=3$.
\end{Beispiel}

\begin{figure}[h]
	\begin{subfigure}{.3\textwidth}
		\centering
		\begin{tikzpicture}
\coordinate (neutre) at (0,0);
\coordinate (a) at (0,1);
\coordinate (b) at (1,0);
\coordinate (ab) at (1,1);

\draw[fill] (neutre) circle [radius=0.025];
\draw[fill] (a) circle [radius=0.025];
\draw[fill] (b) circle [radius=0.025];
\draw[fill] (ab) circle [radius=0.025];


\draw [violet](neutre)--(b);
\draw [orange] (b)--(ab);
\draw [violet] (ab)--(a);
\draw [orange](a)--(neutre);


\end{tikzpicture}
		\caption{$\rC(\{a,b\})$}
		\label{abCell}
	\end{subfigure}
	\begin{subfigure}{.3\textwidth}
		\centering
		\begin{tikzpicture}

\coordinate (neutre) at (90:1);
\coordinate (b) at (135:1);
\coordinate (bc) at (180:1);
\coordinate (bcb) at (225:1);
\coordinate (bcbc) at (270:1);
\coordinate (c) at (45:1);
\coordinate (cb) at (0:1);
\coordinate (cbc) at (315:1);

\draw[fill] (neutre) circle [radius=0.025];
\draw[fill] (b) circle [radius=0.025];
\draw[fill] (bc) circle [radius=0.025];
\draw[fill] (bcb) circle [radius=0.025];
\draw[fill] (bcbc) circle [radius=0.025];
\draw[fill] (c) circle [radius=0.025];
\draw[fill] (cb) circle [radius=0.025];
\draw[fill] (cbc) circle [radius=0.025];


\draw [violet](neutre)--(b);
\draw [hellblau](b)--(bc);
\draw [violet](bc)--(bcb);
\draw[hellblau](bcb)--(bcbc);
\draw [violet](bcbc)--(cbc);
\draw[hellblau] (cbc)--(cb);
\draw[violet] (cb)--(c);
\draw [hellblau](c)--(neutre);



\end{tikzpicture}
		\caption{$\rC(\{b,c\})$}
		\label{bcCell}
	\end{subfigure}
	\begin{subfigure}{.3\textwidth}
		\centering
		\begin{tikzpicture}
\coordinate (ed) at (0,0);
\coordinate (d2) at (-1,0);
\coordinate (neutre) at (0.7,-2);
\coordinate (d) at (1,1);
\coordinate (ced) at (0,1);
\coordinate (cd) at (1,2);
\coordinate (cd2) at (-1,1);
\coordinate (c) at (0.7,-1);

\draw[fill] (neutre) circle [radius=0.025];
\draw[fill] (d2) circle [radius=0.025];
\draw[fill] (ed) circle [radius=0.025];
\draw[fill] (d) circle [radius=0.025];
\draw[fill] (ced) circle [radius=0.025];
\draw[fill] (cd) circle [radius=0.025];
\draw[fill] (cd2) circle [radius=0.025];
\draw[fill] (c) circle [radius=0.025];


\draw[dunkelblau] (ced)--(cd);
\draw [dunkelblau](ced)--(cd2);
\draw [dunkelblau](ced)--(c);
\draw [dunkelblau](ed)--(neutre);
\draw [dunkelblau](ed)--(d);
\draw [dunkelblau](ed)--(d2);
\draw [hellblau](d)--(cd);
\draw [hellblau](neutre)--(c);
\draw[hellblau] (d2)--(cd2);
\draw[hellblau] (ed)--(ced);



\end{tikzpicture}
		\caption{$\rC(\{c,d\})$}
		\label{cdCell}
	\end{subfigure}
	\caption{The cell complexes associated to some vertices of $\X_{m,p}$. 
	}
	\label{blocks}
\end{figure}
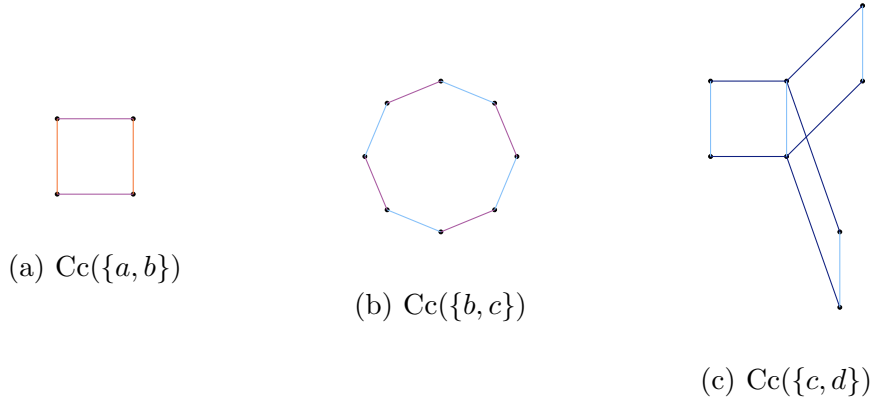

\paragraph{The cell complex $\Sigma(gY)$} Let $gY\in V(\C)$. We now describe the subcomplexes of $\Sigma$ associated to vertices of $\C$.
We start by identifying the vertex set of $\rC(Y)$ with a subset of $V(\St_{in}(Y, \C))$ and more generally with a subset of $V(\St_{in}(gY,\C))$. Let $V_p(gY)$ be the following subset of $V(\St_{in}(gY, \C))$: $$V_p(gY) = \{kX\in V(\St_{in}(gY,\C))\mid X\subset V_p\}.$$

\begin{Lemma}\label{identification} The map $j: V(\rC(Y)) \rightarrow V_p(\neutre Y)$ given by $j(w, \lambda, hZ) = w\phi(\lambda)hZ$ is bijective. Moreover it induces a bijective map $j_g: V(\rC(Y)) \rightarrow V_p(gY)$ with $j_g(w, \lambda, hZ) =  g\cdot j(w, \lambda, hZ)= gw\phi(\lambda)hZ$. 
\end{Lemma}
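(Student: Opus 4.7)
The strategy is to exploit that $Y\in V(\X)$ forces $D_Y$ to be a spherical Dyer group, hence an internal direct product $D_Y = D_{Y_2}\times D_{Y_p}\times D_{Y_\infty}$ with $D^f_Y = D_{Y_2}\times D_{Y_p}$, and moreover $D_{Y_p}$ and $D_{Y_\infty}$ commute elementwise (all edges of $\Gamma_Y$ between $Y_p$ and $Y_\infty$ are labeled $2$). Since $D_{Y_\infty} \cong \Z^{|Y_\infty|}$ with basis $\{x_v\mid v\in Y_\infty\}$, the map $\lambda\mapsto \phi(\lambda)=\prod_{v\in\lambda}x_v$ is the indicator-vector map $\calP(Y_\infty)\hookrightarrow D_{Y_\infty}$, in particular injective. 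These two ingredients — unique factorization in $D_Y$ and injectivity of $\phi$ on $\calP(Y_\infty)$ — will drive everything.

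First I would check well-definedness. If $(w,\lambda,hZ)=(w,\lambda,h'Z)$, i.e.\ $hD_Z=h'D_Z$, then left-multiplying by $w\phi(\lambda)$ shows $w\phi(\lambda)hD_Z=w\phi(\lambda)h'D_Z$, so $j$ descends to the quotient (recall $D^f_Z=D_Z$ since $Z\subset V_p$). Next I would verify that the image lies in $V_p(\neutre Y)$: because $Z\subset Y_p$, the triple $(Z,Y,\lambda)$ with $\lambda\subset Y_\infty=(Y\setminus Z)_\infty$ is an edge of $\X$, and the corresponding edge $w\phi(\lambda)h\cdot(Z,Y,\lambda)$ in $\C$ terminates at $w\phi(\lambda)h\phi(\lambda)^{-1}Y=whY=\neutre Y$ since $wh\in D^f_Y$; this exhibits $j(w,\lambda,hZ)$ as the initial vertex of an incoming edge at $\neutre Y$. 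The degenerate case $Z=Y=Y_p$ is handled by $j(\neutre,\emptyset,\neutre Y)=\neutre Y$.

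For bijectivity I would characterize $V_p(\neutre Y)$ intrinsically: a vertex $kZ$ with $Z\subset V_p$ lies in $\St_{in}(\neutre Y,\C)$ iff $Z\subset Y_p$ and $k\in D^f_Y\,\phi(\omega)\,D_Z$ for some $\omega\subset Y_\infty$. For surjectivity, writing $k=wh'\phi(\omega)d$ with $w\in D_{Y_2}$, $h'\in D_{Y_p}$, $d\in D_Z$ and using the commutation between $D_{Y_p}$ and $D_{Y_\infty}$, one obtains $kZ=w\phi(\omega)hZ=j(w,\omega,hZ)$ with $h=h'd\in D_{Y_p}$. For injectivity, an identity $w\phi(\lambda)hD_Z=w'\phi(\lambda')h'D_Z$ produces $d\in D_Z\subset D_{Y_p}$ with $w\phi(\lambda)h=w'\phi(\lambda')h'd$ in $D_Y$; applying unique factorization in the direct product $D_Y=D_{Y_2}\times D_{Y_p}\times D_{Y_\infty}$ forces $w=w'$, $\phi(\lambda)=\phi(\lambda')$ (hence $\lambda=\lambda'$), and $h=h'd$ (hence $hD_Z=h'D_Z$).

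Finally the passage to $j_g$ is essentially formal: left-multiplication by $g$ is a scwol automorphism of $\C$ carrying $\St_{in}(\neutre Y,\C)$ bijectively onto $\St_{in}(gY,\C)$ and preserving the condition ``$X\subset V_p$'', so it restricts to a bijection $V_p(\neutre Y)\to V_p(gY)$; composing with $j$ yields $j_g$. I do not expect any serious obstacle here — the main thing to watch is the bookkeeping of commutation relations in $D_Y$; once one recognizes that all of this is just unique factorization inside a direct product, the argument is routine.
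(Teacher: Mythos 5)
Your proposal is correct and follows essentially the same route as the paper: both arguments rest on the unique factorization of elements of the spherical group $D_Y$ as products over $D_{Y_2}\times D_{Y_p}\times D_{Y_\infty}$ (modulo $D_Z$) together with the injectivity of $\lambda\mapsto\phi(\lambda)$ on $\calP(Y_\infty)$, and on transporting everything by $g$ for the second claim. The paper compresses this into a two-line construction of the inverse of $j$, whereas you spell out well-definedness, the characterization of $V_p(\neutre Y)$, surjectivity and injectivity separately, but the underlying idea is identical.
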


\begin{proof}
	For every vertex $kZ\in V_p(\neutre Y)$, we have $Z\subset Y_p$ and there is a unique decomposition $k = k_2k_{\infty}\Pi_{v\in Y_p\setminus Z}k_v$ with $k_2\in D_{Y_2}$, $k_v\in C_{f(v)}$ and $k_{\infty}\in D_{Y_{\infty}}$. Since $kZ\in V_p(\neutre Y)$ we have that $k_{\infty} = \phi(\lambda)$ for a unique $\lambda\subset (Y\setminus Z)_{\infty}$. So the map $j$ is bijective. Finally $kZ\in V_p(gY)$ if and only if $g^{-1}kZ \in V_p(\neutre Y)$. So the map $j_g$ is bijective. 
\end{proof}

For $gY\in \C$ let $\Sigma(gY)$ be the piecewise Euclidean cell complex isometric to $\rC(Y)$ with vertex set $V(\Sigma(gY))=V_p(gY)$ induced by the map $j_g$. Note that if $hY = gY$ the map $j_g\circ j_h^{-1}$ induces an isometry of $\Sigma(gY)$ so $\Sigma(gY)$ is well defined. More precisely $U\subset V_p(gY)$ is the vertex set of a cell of $\Sigma(gY)$ if and only if $j_g^{-1}(U)$ is the vertex set of a cell in $\rC(Y)$.

We now discuss identifications of subcomplexes. Indeed for $gY\in V(\C)$ and $hZ\in\St_{in}(gY,\C)$, we have $V_p(hZ)\subset V_p(gY)$. The following lemma shows that this inclusion induces an isometric embedding of the cell complex $\Sigma(hZ)$ into $\Sigma(gY)$.

\begin{Lemma}\label{inclusion}
	Let $gY\in V(\C)$ and $hZ\in\St_{in}(gY,\C)$. The map $\iota: \Sigma(hZ)\rightarrow\Sigma(gY)$ induced by $\iota(v) = v$ for every vertex $v\in \Sigma(hZ)$ is an isometric embedding preserving the cell structure. In particular we can identify $\Sigma(hZ)$ with $\iota(\Sigma(hZ))$. 
\end{Lemma}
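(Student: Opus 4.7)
The plan is to transport $\iota$ through the identifications of Lemma \ref{identification} into an explicit map $F\colon\rC(Z)\to\rC(Y)$ and check, factor by factor on the product decomposition $\rC(\bullet)=\Cox(\bullet_2)\times[0,1]^{|\bullet_\infty|}\times\stern(\bullet_p)$, that $F$ is a cell-preserving isometric embedding. First I would reduce to $g=\neutre$: the left $D$-action on $\C$ restricts to a bijection $\Sigma(kY)\to\Sigma(gkY)$ for every $k$ and intertwines $j_k$ with $j_{gk}$, hence realises an isometry $\Sigma(gY)\to\Sigma(\neutre Y)$ compatible with the inclusion of $\Sigma(hZ)$. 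Assuming $g=\neutre$, the hypothesis $hZ\in\St_{in}(\neutre Y,\C)$ yields $Z\subset Y$, a subset $\omega\subset(Y\setminus Z)_\infty$ and an element $d\in D^f_Y$ with $h=d\phi(\omega)$ (with $\omega=\emptyset$ when $hZ=\neutre Y$). The sphericality of $D_Y$, which supplies the direct-product decomposition $D_Y=D_{Y_2}\times D_{Y_p}\times D_{Y_\infty}$, lets me further split $d=h_2h_p$ with $h_2\in D_{Y_2}$ and $h_p\in D_{Y_p}$.

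For a vertex $(w,\lambda,k'X)\in V(\rC(Z))$ (where $X\subset Z_p$, $w\in D_{Z_2}$, $\lambda\subset Z_\infty$, $k'\in\prod_{v\in Z_p\setminus X}C_{f(v)}$), I would then compute $j_h(w,\lambda,k'X)=hw\phi(\lambda)k'X$ in $V(\C)$. The Dyer-graph axiom forces label $2$ on every edge of $\Gamma_Y$ incident to a vertex of $V_p\cup V_\infty$, and $D_Y$ is a direct product, so $w$, $k'$, $h_p$ and $\phi(\omega)$ commute among themselves as needed to rearrange
\[
hw\phi(\lambda)k'=(h_2w)\,\phi(\omega\cup\lambda)\,(h_pk').
\]
Decomposing $h_pk'\in D_{Y_p}$ as $(h_pk')_X\cdot k''$ with $(h_pk')_X\in D_X\subset D^f_X$ and $k''\in\prod_{v\in Y_p\setminus X}C_{f(v)}$, the factor $(h_pk')_X$ is absorbed on the right into the coset $X$, so
\[
j_h(w,\lambda,k'X)=(h_2w)\phi(\omega\cup\lambda)k''X=j_\neutre(h_2w,\omega\cup\lambda,k''X).
\]
This gives the explicit formula $F(w,\lambda,k'X)=(h_2w,\,\omega\cup\lambda,\,k''X)$.

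Finally I would analyse $F$ factorwise. On the Coxeter factor, $w\mapsto h_2w$ is the face inclusion $\Cox(Z_2)\cong\Cox_{Z_2}(Y_2)\hookrightarrow\Cox(Y_2)$ from Section \ref{Coxeter} composed with the $D_{Y_2}$-isometry given by $h_2$, hence an isometric embedding onto the face corresponding to $h_2D_{Z_2}$. On the cube factor, $\lambda\mapsto\omega\cup\lambda$ is the face inclusion of $[0,1]^{|Z_\infty|}$ into $[0,1]^{|Y_\infty|}$ fixing the $\omega$-coordinates at $1$ and the $(Y\setminus Z)_\infty\setminus\omega$-coordinates at $0$. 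On the star factor, $k'X\mapsto k''X$ is the composition of the natural inclusion $\stern(Z_p)\hookrightarrow\stern(Y_p)$ extending $k'$ by $\neutre$ on $Y_p\setminus Z_p$ with left-multiplication by $h_p\in D_{Y_p}$, an isometry of $\stern(Y_p)$. Each of these is a cell-preserving isometric embedding, so their product (with the $\ell_2$-product metric and product cell structure) is a cell-preserving isometric embedding $F$; transporting back through $j_h$ and $j_\neutre$ yields the claim. The main obstacle is the algebraic bookkeeping in the middle step: one must verify that every commutation invoked to rearrange $hw\phi(\lambda)k'$ truly holds in $D$, which is precisely where the Dyer-graph label conditions and the direct-product structure of the spherical group $D_Y$ are used in an essential way.
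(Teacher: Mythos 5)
Your proposal is correct and follows essentially the same route as the paper: reduce to $g=\neutre$, decompose the coset representative as $h=h_2\,\phi(\omega)\,h_p$ using the direct-product structure of the spherical group $D_Y$ and the Dyer-graph label conditions, and check the induced map $\rC(Z)\to\rC(Y)$ factorwise on the Coxeter, cube and star factors. The only (immaterial) difference is that the paper normalizes $h_p\in D_{Y_p\setminus Z_p}$ from the start, whereas you allow a general $h_p\in D_{Y_p}$ and absorb its $D_X$-component into the coset afterwards.
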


\begin{proof}
Since $hZ\in\St_{in}(gY,\C)$ if and only if $g^{-1}hZ\in\St_{in}(Y,\C)$ it suffices to consider the case $g=\neutre$. For $hZ\in\St_{in}(\neutre Y,\C)$ we can write $h = h_2h_{\infty}h_p$ with $h_2\in D_{Y_2}$, $h_{\infty} = \phi(\kappa)$ for a unique $\kappa\subset (Y\setminus Z)_{\infty}$ and $h_p\in D_{Y_p\setminus Z_p}$. The map $\iota_h:\rC(Z)\rightarrow\rC(Y)$ induced by $\iota(w,\lambda,mM) = (h_2w,\lambda\cup\kappa,h_pmM)$ is an isometric embedding preserving the cell structure. Indeed this holds for the restrictions to $\rC(Z_2)$ (where $\iota(w) = h_2w$ which corresponds to identifying $\Cox(Z_2)$ with $h_2\cdot\Cox_{Z_2}(Y_2)$), $\rC(Z_{\infty})$ (where $\iota(\lambda) =  \lambda\cup\kappa$ which identifies $\Pi_{v\in Z_{\infty}}[0,1]$ with $\Pi_{v\in Z_{\infty}}[0,1]\times\Pi_{v\in\kappa}\{1\}\times\Pi_{v\in Y_{\infty}\setminus(\kappa\cup Z_{\infty})}\{0\}\subset \Pi_{v\in Y_{\infty}}[0,1]$ ) and $\rC(Z_p)$ (where $\iota(mM) =  h_pmM$ which identifies $\stern(Z_p)$ with $\stern(Z_p)\times\{h_p\}\subset\stern(Y_p)$). Then the map $\iota:  \Sigma(hZ)\rightarrow\Sigma(\neutre Y)$ is induced by $j_h^{-1}\circ\iota_h\circ j_{\neutre}$ so $\iota$ is an isometric embedding preserving the cell structure.
\end{proof}

The next lemma discusses how $\St_{in}(gY, \C)$ can be interpreted as a simplicial subdivision of $\Sigma(gY)$.

\begin{Lemma}\label{simplicial subdivision}
	Let $gY\in V(\C)$. The scwol $\St_{in}(gY, \C)$ can be realized as the scwol of a simplicial subdivision of $\Sigma(gY)$. Moreover for $hZ\in\St_{in}(gY,\C)$, the isometric embedding $\iota: \Sigma(hZ)\rightarrow\Sigma(gY)$ given in Lemma \ref{inclusion} preserves the simplicial subdivision.
\end{Lemma}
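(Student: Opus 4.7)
\emph{Proof proposal.} The plan is to reduce to the case $g=\neutre$ and to a spherical $Y$, use the product decomposition of both $\Sigma(\neutre Y)$ and $\St_{in}(\neutre Y,\C)$, handle each of the three factors separately, and then verify the naturality statement about $\iota$ by unwinding the identifications.

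First I would reduce to $g=\neutre$: left-multiplication by $g$ gives a scwol isomorphism $\St_{in}(\neutre Y,\C)\to\St_{in}(gY,\C)$, and by definition $\Sigma(gY)$ is isometric to $\Sigma(\neutre Y)$ via $j_g\circ j_\neutre^{-1}$, so whatever simplicial subdivision is produced at $\neutre Y$ transports to $gY$. By the isomorphism $\St_{in}(\neutre Y,\C(\X,\phi))\cong\St_{in}(\neutre Y,\C(\X_Y,\phi^{\Gamma_Y}))$, I may further assume $\Gamma=\Gamma_Y$, so that $D$ is spherical and $Y$ is the unique maximal vertex of $\X$. In this spherical setting, Lemma \ref{product development} and Remark \ref{product complex} yield product decompositions
\begin{align*}
\Sigma(\neutre Y) &=\rC(Y_2)\times\rC(Y_\infty)\times\rC(Y_p),\\
\St_{in}(\neutre Y,\C) &=\St_{in}(\neutre Y_2,\C_2)\times\St_{in}(\neutre Y_\infty,\C_\infty)\times\St_{in}(\neutre Y_p,\C_p),
\end{align*}
where $\C_i=\C(\X_i,\phi_i)$. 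It therefore suffices to produce, for each factor, a simplicial subdivision of the cell complex coming from the scwol $\St_{in}(\neutre Y_i,\C_i)$, and then to verify that the product scwol gives the simplicial subdivision of the product cell complex obtained by joining chains from the factors.

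For the Coxeter factor, $\St_{in}(\neutre Y_2,\C_2)=\C_2$ since $Y_2$ is the maximum of $\X_2$; by Lemma \ref{Coxeter description} this coincides with $W\calS$ for $D_2$, which is precisely the barycentric subdivision of $\Cox(Y_2)=\rC(Y_2)$ (Lemma 7.3.3 of \cite{Davis}, recalled in Section \ref{Coxeter}). For the $\infty$-factor, $\C_\infty=\prod_{v\in Y_\infty}\C(\calZ_v,\phi^{\{v\}})$; a direct computation (parallel to the example at the end of Section \ref{sec:scwols}) shows that $\St_{in}(\neutre\{v\},\C(\calZ_v,\phi^{\{v\}}))$ has exactly three vertices $\neutre\{v\}$, $\neutre\emptyset$ and $x_v\emptyset$, which under $j$ correspond to the midpoint and the two endpoints of $\rC(\{v\})=[0,1]$, i.e.\ the barycentric subdivision of the $1$-cell. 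For the $p$-factor, each $\C(\Y_v,\phi^{\{v\}})$ already realizes as $\stern(v)$ with its canonical simplicial structure, and again $\{v\}$ is the top of $\Y_v$ so $\St_{in}(\neutre\{v\},\C(\Y_v,\phi^{\{v\}}))=\C(\Y_v,\phi^{\{v\}})$. In each case the identification of the vertex set of the star with the desired set of points in $\rC(Y_i)$ is given by the map $j$ of Lemma \ref{identification} on the $V_p(\neutre Y)$-vertices, and by sending a vertex of the form $kX$ with $X\not\subset V_p$ to the barycenter of the cell of $\rC(Y)$ determined by $kX$.

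For the product step, the key point is that the geometric realization of a product of poset-scwols is the order complex of the product poset, whose maximal simplices are the maximal chains built by shuffling chains from each factor; this naturally refines the product cellular structure on $\rC(Y_2)\times\rC(Y_\infty)\times\rC(Y_p)$ into a genuine simplicial complex. So the product scwol $\St_{in}(\neutre Y,\C)$ geometrically realizes as a simplicial subdivision of $\Sigma(\neutre Y)$. The hard part of the argument is precisely this step: confirming at the level of geometric realizations that the product scwol structure gives a bona fide simplicial subdivision, rather than just a combinatorial refinement, and keeping track of how the three factor subdivisions interact at cells of mixed type.

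Finally, for the moreover part, recall that $hZ\in\St_{in}(gY,\C)$ means every vertex and edge of $\St_{in}(hZ,\C)$ is still a vertex, resp.\ edge, of $\St_{in}(gY,\C)$ (by composition of scwol edges), so $\St_{in}(hZ,\C)$ embeds as a full subscwol of $\St_{in}(gY,\C)$. Under the identification produced above and the isometric embedding $\iota$ of Lemma \ref{inclusion}, the vertex $kX$ of $\St_{in}(hZ,\C)$ goes to the same point of $\Sigma(gY)$ whether one computes its image via the $hZ$-subdivision followed by $\iota$, or directly via the $gY$-subdivision; this is essentially a diagram-chase using the explicit form of $\iota_h$ in Lemma \ref{inclusion} and the definition of $j_g,j_h$. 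Therefore chains in $\St_{in}(hZ,\C)$ map to chains in $\St_{in}(gY,\C)$, which means $\iota$ sends simplices of the subdivision of $\Sigma(hZ)$ to simplices of the subdivision of $\Sigma(gY)$, as required.
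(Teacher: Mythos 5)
Your proposal is correct and follows essentially the same route as the paper: reduce to $\neutre Y$, decompose $\St_{in}(\neutre Y,\C)$ and $\rC(Y)$ as products, identify the three factor subdivisions (barycentric subdivision of $\Cox(Y_2)$ via Lemma \ref{Coxeter description}, barycentric subdivision of $[0,1]$ for each $v\in Y_\infty$, and $\stern(v)$ for each $v\in Y_p$), assemble via the product/shuffle subdivision, and prove the naturality of $\iota$ factorwise. The only difference is one of emphasis — you spell out the shuffle-product step and the diagram chase for $\iota$ in more detail than the paper does — but the content is the same.
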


\begin{proof}
	Let $Y\in V(\X)$ and $g\in D$. Then using Lemma \ref{Coxeter description}, $\St_{in}(Y_2,\C)$ is isomorphic as a scwol to the barycentric subdivision of $\Cox(Y_2)$. For $v\in Y_{\infty}$, $\St_{in}(\{v\},\C)$ is isomorphic as a scwol to the barycentric subdivision of $[0,1]$. For $v\in Y_{p}$, $\St_{in}(\{v\},\C)$ is an $f(v)$-branched star so as a simplicial complex it is isomorphic to $\stern(v)$. Using the product decompositions $$\rC(Y) = \rC(Y_2)\times\Pi_{v\in Y_{\infty}}\rC(\{v\})\times\Pi_{v\in Y_p}\rC(\{v\})$$ and $$\St_{in}(Y,\C) = \St_{in}(Y_2,\C)\times\Pi_{v\in Y_{\infty}}\St_{in}(\{v\},\C)\times\Pi_{v\in Y_p}\St_{in}(\{v\},\C),$$ we have that $\St_{in}(Y,\C)$ describes a simplicial subdivision of $\rC(Y)$. Hence \linebreak $\St_{in}(gY,\C)$ is the scwol of this simplicial subdivision of $\Sigma(gY)$. 
	 This induces a metric on the geometric realization $|\St_{in}(gY, \C)|$ such that it is isometric to $\Sigma(gY)$. For the second statement note that $\iota$ is an isometric embedding, preserves the cell structure, and decomposes as a product. Additionally the canonical inclusion $\St_{in}(hZ, \C)\rightarrow\St_{in}(gY, \C)$ also decomposes as a product. The simplicial subdivision is preserved for each factor of the product decomposition. So the simplicial subdivision is preserved by $\iota$.
\end{proof}

\begin{Beispiel}
	Let $m, p\in \N_{\geq 2}$. We go back to the example of the Dyer graph $\Gamma_{m,p}$ with associated Dyer group $D_{m,p}$ and scwol $\X_{m,p}$ given in Figure \ref{fig1}, Example \ref{nontrivial example} and Figure \ref{exscwol}. Figure \ref{subcomplex} shows the subcomplexes $\Sigma(\neutre\{a,b\})$, $\Sigma(\neutre\{b,c\})$, $\Sigma(\neutre\{c,d\})$ and their simplicial subdivision in the case $m=4$ and $p=3$.
\end{Beispiel}

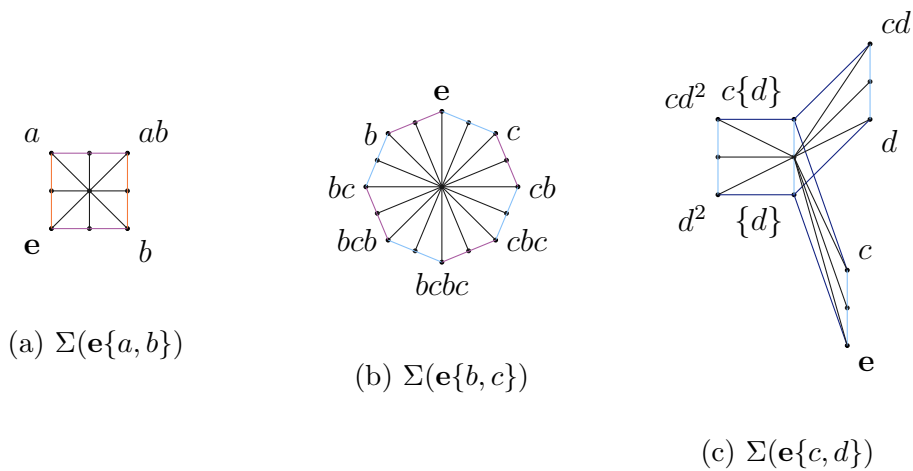
\begin{figure}[h]
	\begin{subfigure}{.3\textwidth}
		\centering
		\begin{tikzpicture}
\coordinate (neutre) at (0,0);
\coordinate (a) at (0,1);
\coordinate (b) at (1,0);
\coordinate (ab) at (1,1);

\draw[fill] (neutre) circle [radius=0.025] node[below left]{$\neutre$};
\draw[fill] (a) circle [radius=0.025]node[above left]{$a$};
\draw[fill] (b) circle [radius=0.025]node[below right]{$b$};
\draw[fill] (ab) circle [radius=0.025]node[above right]{$ab$};

\draw[fill, color=black!90!white] (0, 0.5) circle [radius=0.025];
\draw[fill, color=black!90!white] (0.5,0) circle [radius=0.025];
\draw[fill, color=black!90!white] (1, 0.5) circle [radius=0.025];
\draw[fill, color=black!90!white] (0.5,1) circle [radius=0.025];
\draw[fill, color=black!90!white] (0.5,0.5) circle [radius=0.025];

\draw [violet](neutre)--(b);
\draw [orange] (b)--(ab);
\draw [violet] (ab)--(a);
\draw [orange](a)--(neutre);

\draw[color=black!90!white] (0.5,0.5)--(neutre);
\draw[color=black!90!white] (0.5,0.5)--(0.5,0);
\draw[color=black!90!white] (0.5,0.5)--(b);
\draw[color=black!90!white] (0.5,0.5)--(1,0.5);
\draw[color=black!90!white] (0.5,0.5)--(ab);
\draw[color=black!90!white] (0.5,0.5)--(0.5,1);
\draw[color=black!90!white] (0.5,0.5)--(a);
\draw[color=black!90!white] (0.5,0.5)--(0,0.5);

\end{tikzpicture}
		\caption{$\Sigma(\neutre\{a,b\})$}
		\label{abSigma}
	\end{subfigure}
	\begin{subfigure}{.3\textwidth}
		\centering
		\begin{tikzpicture}

\coordinate (neutre) at (90:1);
\coordinate (b) at (135:1);
\coordinate (bc) at (180:1);
\coordinate (bcb) at (225:1);
\coordinate (bcbc) at (270:1);
\coordinate (c) at (45:1);
\coordinate (cb) at (0:1);
\coordinate (cbc) at (315:1);

\draw[fill] (neutre) circle [radius=0.025] node[above]{$\neutre$};
\draw[fill] (b) circle [radius=0.025]node[left]{$b$};
\draw[fill] (bc) circle [radius=0.025]node[left]{$bc$};
\draw[fill] (bcb) circle [radius=0.025]node[left]{$bcb$};
\draw[fill] (bcbc) circle [radius=0.025] node[below]{$bcbc$};
\draw[fill] (c) circle [radius=0.025]node[right]{$c$};
\draw[fill] (cb) circle [radius=0.025]node[right]{$cb$};
\draw[fill] (cbc) circle [radius=0.025]node[right]{$cbc$};

\draw[fill,  color=black!90!white] (112.5:0.92) circle [radius=0.025];
\draw[fill,  color=black!90!white] (157.5:0.92) circle [radius=0.025];
\draw[fill,  color=black!90!white] (202.5:0.92) circle [radius=0.025];
\draw[fill,  color=black!90!white] (247.5:0.92) circle [radius=0.025];
\draw[fill,  color=black!90!white] (292.5:0.92) circle [radius=0.025];
\draw[fill,  color=black!90!white] (337.5:0.92) circle [radius=0.025];
\draw[fill,  color=black!90!white] (67.5:0.92) circle [radius=0.025];
\draw[fill,  color=black!90!white] (22.5:0.92) circle [radius=0.025];
\draw[fill,  color=black!90!white] (0,0) circle [radius=0.025];

\draw [violet](neutre)--(b);
\draw [hellblau](b)--(bc);
\draw [violet](bc)--(bcb);
\draw[hellblau](bcb)--(bcbc);
\draw [violet](bcbc)--(cbc);
\draw[hellblau] (cbc)--(cb);
\draw[violet] (cb)--(c);
\draw [hellblau](c)--(neutre);

\draw[color=black!90!white] (0,0)--(neutre);
\draw[color=black!90!white] (0,0)--(b);
\draw[color=black!90!white] (0,0)--(bc);
\draw[color=black!90!white] (0,0)--(bcb);
\draw[color=black!90!white] (0,0)--(bcbc);
\draw[color=black!90!white] (0,0)--(c);
\draw[color=black!90!white] (0,0)--(cb);
\draw[color=black!90!white] (0,0)--(cbc);

\draw[color=black!90!white] (0,0)--(112.5:0.92);
\draw[color=black!90!white] (0,0)--(157.5:0.92);
\draw[color=black!90!white] (0,0)--(202.5:0.92);
\draw[color=black!90!white] (0,0)--(247.5:0.92);
\draw[color=black!90!white] (0,0)--(292.5:0.92);
\draw[color=black!90!white] (0,0)--(337.5:0.92);
\draw[color=black!90!white] (0,0)--(67.5:0.92);
\draw[color=black!90!white] (0,0)--(22.5:0.92) ;

\end{tikzpicture}
		\caption{$\Sigma(\neutre\{b,c\})$}
		\label{bcSigma}
	\end{subfigure}
	\begin{subfigure}{.3\textwidth}
		\centering
		\begin{tikzpicture}
\coordinate (ed) at (0,0);
\coordinate (d2) at (-1,0);
\coordinate (neutre) at (0.7,-2);
\coordinate (d) at (1,1);
\coordinate (ced) at (0,1);
\coordinate (cd) at (1,2);
\coordinate (cd2) at (-1,1);
\coordinate (c) at (0.7,-1);

\draw[fill] (neutre) circle [radius=0.025] node[below right]{$\neutre$};
\draw[fill] (d2) circle [radius=0.025] node[below left]{$d^2$};
\draw[fill] (ed) circle [radius=0.025] node[below left]{$\{d\}$};
\draw[fill] (d) circle [radius=0.025] node[below right]{$d$};
\draw[fill] (ced) circle [radius=0.025] node[above left]{$c\{d\}$};
\draw[fill] (cd) circle [radius=0.025] node[above right]{$cd$};
\draw[fill] (cd2) circle [radius=0.025] node[above left]{$cd^2$};
\draw[fill] (c) circle [radius=0.025] node[above right]{$c$};

\draw[fill,  color=black!90!white] (1,1.5) circle [radius=0.025];
\draw[fill,  color=black!90!white] (0,0.5) circle [radius=0.025];
\draw[fill,  color=black!90!white] (-1,0.5) circle [radius=0.025];
\draw[fill,  color=black!90!white] (0.7,-1.5) circle [radius=0.025];

\draw[dunkelblau] (ced)--(cd);
\draw [dunkelblau](ced)--(cd2);
\draw [dunkelblau](ced)--(c);
\draw [dunkelblau](ed)--(neutre);
\draw [dunkelblau](ed)--(d);
\draw [dunkelblau](ed)--(d2);
\draw [hellblau](d)--(cd);
\draw [hellblau](neutre)--(c);
\draw[hellblau] (d2)--(cd2);
\draw[hellblau] (ed)--(ced);

\draw[color=black!90!white] (0,0.5)--(cd2);
\draw[color=black!90!white] (0,0.5)--(d2);
\draw[color=black!90!white] (0,0.5)--(-1,0.5);

\draw[color=black!90!white] (0,0.5)--(cd);
\draw[color=black!90!white] (0,0.5)--(d);
\draw[color=black!90!white] (0,0.5)--(1,1.5);

\draw[color=black!90!white] (0,0.5)--(c);
\draw[color=black!90!white] (0,0.5)--(neutre);
\draw[color=black!90!white] (0,0.5)--(0.7,-1.5);
\end{tikzpicture}
		\caption{$\Sigma(\neutre\{c,d\})$}
		\label{cdSigma}
	\end{subfigure}
\caption{The subcomplexes associated to some vertices of the development of $\X_{m,p}$ and their simplicial subdivision. 
}
\label{subcomplex}
\end{figure}

\paragraph{The cell complex $\Sigma$} We now have the tools needed to build the cell complex $\Sigma$. Consider $$\Sigma = \bigcup_{gY\in V(\C)} \Sigma(gY),$$ where we identify $\Sigma(hZ)$ with $\iota(\Sigma(hZ))\subset\Sigma(gY)$ whenever $hZ\in \St_{in}(gY,\C)$. So by Lemma \ref{inclusion}, $\Sigma$ has a well-defined piecewise Euclidean metric. We endow $\Sigma$ with the associated length metric. The set of vertices of $\Sigma$ is $$V_p(\C) = \{gY\in V(\C)\mid Y\subset V_p\}.$$ The action of $D$ on $V_p(\C)$ induces an action by isometries of $D$ on $\Sigma$, in particular for $d\in D$ we have $d\cdot\Sigma(gY) = \Sigma(dgY)$.

\begin{Lemma}\label{simply connected}
	The scwol $\C$ describes a simplicial subdivision of $\Sigma$. In particular this implies that $\Sigma$ is a simply connected metric space.
\end{Lemma}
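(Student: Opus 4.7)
The plan is to realize $\C$ as a global simplicial subdivision of $\Sigma$ by gluing together the local subdivisions furnished by Lemma~\ref{simplicial subdivision}, and then to deduce simple connectivity of $\Sigma$ from that of the geometric realization $|\C|$, which is the universal cover of the complex of groups $\D(\X)$.

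First I would verify that the local subdivisions are compatible. By Lemma~\ref{simplicial subdivision}, for each $gY \in V(\C)$ the subscwol $\St_{in}(gY,\C)$ is the scwol of a simplicial subdivision of $\Sigma(gY)$. Every edge and every pair of composable edges in $\C$ has a well-defined terminal vertex, so $\C = \bigcup_{gY \in V(\C)} \St_{in}(gY,\C)$. The only nontrivial check is that if $hZ \in \St_{in}(g_1Y_1,\C) \cap \St_{in}(g_2Y_2,\C)$, then the two induced subdivisions agree on the common region $\Sigma(hZ) \subset \Sigma(g_1Y_1) \cap \Sigma(g_2Y_2)$. This follows from the second part of Lemma~\ref{simplicial subdivision}, which says that the embeddings $\iota: \Sigma(hZ) \to \Sigma(g_iY_i)$ preserve the simplicial subdivisions. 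Since $\Sigma$ is defined as the union of the $\Sigma(gY)$ glued precisely along these maps $\iota$, the local subdivisions assemble into a global simplicial subdivision of $\Sigma$ whose underlying scwol is $\C$, and therefore $|\C|$ is homeomorphic (in fact isometric under the length metric) to $\Sigma$.

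For the second assertion I would invoke the theory of developments. By Lemma~\ref{metric}, the scwol $\X$ is connected, the complex of groups $\D(\X)$ is developable, and its fundamental group is $D$. By construction, $\phi: \D(\X) \to D$ is the canonical morphism to the fundamental group: $\phi_X: D^f_X \to D$ is the natural inclusion, and the prescribed values $\phi(X,Y,\omega) = \Pi_{v\in\omega}x_v$ correspond to the images of the edges $(X,Y,\omega)^{+}$ in $\pi_1(\D(\X))$. Hence $\C = \C(\X,\phi)$ is the universal cover of $\D(\X)$ in the sense of \cite{BriHae} III.$\C$.3, so $|\C|$ is simply connected. Combined with the first paragraph, this shows $\Sigma$ is simply connected.

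The main obstacle will be the compatibility step: I need to confirm that \emph{every} identification present in the definition of $\Sigma$ is already accounted for by some common $hZ$ belonging to both $\St_{in}(g_1Y_1,\C)$ and $\St_{in}(g_2Y_2,\C)$, i.e.\ that no extra identifications are introduced by the length metric beyond those coming from stars. Concretely, one has to check that whenever a cell of $\Sigma(g_1Y_1)$ and a cell of $\Sigma(g_2Y_2)$ share a face in $\Sigma$, this shared face is realized as $\iota(\Sigma(hZ))$ inside both, which can be verified by tracking vertex sets in $V_p(\C)$ via Lemma~\ref{identification} and using that the cell structure on each $\rC(Y)$ is entirely determined by that data.
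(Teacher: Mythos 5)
Your proof is correct and follows essentially the same route as the paper: assemble the local subdivisions of the $\Sigma(gY)$ from Lemma~\ref{simplicial subdivision} into a global subdivision using $\C=\bigcup_{gY}\St_{in}(gY,\C)$ and the $\iota$-compatibility, then identify $|\C|$ with $\Sigma$ and conclude simple connectivity. The only difference is one of explicitness: the paper asserts simple connectivity without comment, whereas you correctly supply the intended justification, namely that $\C=\C(\X,\phi)$ with $\phi$ the canonical morphism to the fundamental group $D$ is the universal cover of the developable complex of groups $\D(\X)$ in the sense of \cite{BriHae}.
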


\begin{proof}
	Since $\C = \bigcup_{gY\in V(\C)}\St_{in}(gY,\C)$ and by Lemma \ref{simplicial subdivision} every $\St_{in}(gY,\C)$ is a simplicial subdivision of $\Sigma(gY)$ preserved by $\iota$, the complex $\C$ is a simplicial subdivion of $\Sigma$. This induces a metric on $\C$ with respect to which the geometric realization $|\C|$ is isometric to $\Sigma$. So $\Sigma$ is a well-defined simply connected metric space.
\end{proof}

We are finally in a position to show that $\Sigma$ is CAT(0). Since $\Sigma$ is simply connected, we only need to understand its local structure, so we are back to studying links of vertices. In order to have a precise description of the links of vertices we introduce an edge labeling of $\Sigma$ by $V(\Gamma)$.

\paragraph{Edge labeling} Let $hY\in \C$. We start by labeling the edges of $\Sigma(hY)$ by elements of $Y$. To define this edge labeling we identify when two vertices of $\Sigma(h Y)$ are adjacent and then give the corresponding label. Let $kX, lZ\in V_p(h Y)$, i.e. they are vertices of $\Sigma(h Y)$. Then $kX$ and $lZ$ are adjacent in $\Sigma(h Y)$ if and only if their pre-images $j_h^{-1}(kX), j_h^{-1}(lZ)\in V(\rC(Y))$ are adjacent in $\rC(Y)$. This leads to the following characterization and labeling of edges by $Y\subset V(\Gamma)$. The vertices $kX, lZ\in V_p(h Y)$ are adjacent in $\Sigma(h Y)$ if and only if one of the following holds \begin{enumerate}
	\item $X=Z$ and $k^{-1}l = x_v$ for some $v\in Y_2$. In this case we label the edge by $v\in Y_2\subset V(\Gamma)$.
	\item $X=Z$ and $k^{-1}l = x_v^{\pm 1}$ for some $v\in Y_{\infty}$. In this case we label the edge by $v\in Y_{\infty}\subset  V(\Gamma)$.
	\item $X\subset Z$ and $Z\setminus X = \{x_v\}$ for some $v\in Y_p$ and $k^{-1}l\in \langle x_v\rangle$. In this case we label the edge by $v\in Y_{p}\subset  V(\Gamma)$.
	\item $Z\subset X$ and $X\setminus Z = \{x_v\}$ for some $v\in Y_p$ and $k^{-1}l\in \langle x_v\rangle$. In this case we label the edge by $v\in Y_{p}\subset  V(\Gamma)$.
\end{enumerate}

Note that for $h'Y'\in\St_{in}(gY,\C)$, the labeling of an edge in $\Sigma(h'Y')$ is invariant under the inclusion $\iota: \Sigma(h'Y')\rightarrow\Sigma(gY)$. Moreover the labeling of edges in $\Sigma(\neutre Y)$ is invariant under the action of $D^f_Y$. So this defines a labeling by $V(\Gamma)$ of the edges of $\Sigma$. Note that this edge labeling is invariant under the action of $D$.

\begin{Bemerkung}[Links of vertices]\label{links of vertices} As our goal is to apply Moussong's Lemma to $\Sigma$ we need to understand links of vertices in $\Sigma$. We start with links of vertices in $\Sigma(gY)$. This is crucial to prove later that $\Sigma$ is CAT(0). 
Let $hY\in V(\C)$, $kX\in V_p(hY)$. The edge labeling on $\Sigma$ and $\Sigma(gY)$ induce a vertex labeling $l:V(\Lk(kX,\Sigma))\rightarrow V$, which restricts to $l:V(\Lk(kX,\Sigma(hY)))\rightarrow Y$. Using the map $j_h$ in Lemma \ref{identification} and Remark \ref{links of blocks}, we see that $\Lk(kX,\Sigma(hY))$ is the flag complex over the join $\Gamma_{Y_2}\star\Gamma_{Y_{\infty}}\star\Gamma_{Y_p\setminus X}\star(\star_{v\in X}\{v^i\mid 1\leq i\leq f(v)\})$. The vertex labeling is given by $l(v) = v$ for every $v\in Y_2\cup Y_{\infty}\cup Y_p\setminus X$ and $l(v^i)= v$ for every $v^i\in\{v^i\mid v\in X, 1\leq i\leq f(v)\}$. 
By Remark \ref{links of blocks}, the edge length in $\Lk(kX,\Sigma(hY))$ is given by $d(v,w)=\pi-\pi/m(l(v),l(w))$. In particular Lemma \ref{posdef} implies that $\Lk(kX,\Sigma(hY))$ is a metric flag complex. It follows that $v, w$ are adjacent vertices in $\Lk(kX,\Sigma(hY))$ if and only if $l(v)\neq l(w)$. As this holds for every $gY\in V(\C)$, it implies that if $v, w$ are adjacent vertices in $\Lk(kX,\Sigma)$, we have $l(v)\neq l(w)$. So for pairwise adjacent vertices $v_1,\dots,v_n\in\Lk(kX,\Sigma)$, we have $l(v_i)\neq l(v_j)$ for every $i\neq j$. So in a slight abuse of notation we will write $v_i=l(v_i)\in V$ when considering pairwise adjacent vertices $v_1,\dots,v_n\in\Lk(kX,\Sigma)$.
\end{Bemerkung}

Let us now give some more details on the link of vertices in $\Sigma$.
\begin{Lemma}\label{lem:link} Let $Y\in V_p(\C)$. Let $v_1,\dots,v_k\in\Lk(Y,\Sigma)$ be pairwise adjacent vertices. There exist $Z\in V(\X)$ and $g\in D$ such that $Y\in\Sigma(gZ)$ and \linebreak $v_1,\dots,v_k\in\Lk(Y,\Sigma(gZ))$ if and only if $Y\cup\{v_1,\dots,v_k\}\in V(\X)$.
\end{Lemma}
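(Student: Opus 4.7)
The plan is to treat the two implications separately and reduce both to the explicit description of $\Lk(hY',\Sigma(gZ))$ provided by Remark \ref{links of vertices}. Throughout I write $Y = hY'$ with $Y' \subset V_p$.

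For the forward direction, I would chase definitions. If $Y \in V_p(gZ)$ then either $Y = gZ$ or there is an edge in $\C$ from $hY'$ to $gZ$, and in both cases this forces $Y' \subset Z$. Remark \ref{links of vertices} then guarantees $l(v_1),\dots,l(v_k) \in Z$, so $Y' \cup \{v_1,\dots,v_k\} \subset Z$. Since $Z \in V(\X)$, the graph $\Gamma_Z$ is complete and its cosine matrix is positive definite by Lemma \ref{posdef}; both properties are inherited by any induced subgraph, so $Y' \cup \{v_1,\dots,v_k\} \in V(\X)$.

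For the reverse direction the natural candidate is $Z := Y' \cup \{v_1,\dots,v_k\}$, and the real work is to choose $g \in D$ so that the specific vertices $v_i$ (not merely vertices sharing their labels) all sit inside $\Lk(Y,\Sigma(gZ))$. The subtlety here is that for each $v_i$ with $l(v_i) \in V_\infty$ there are two vertices of $\Lk(Y,\Sigma)$ carrying that label, namely $hx_{l(v_i)}Y'$ and $hx_{l(v_i)}^{-1}Y'$, while in a given $\Sigma(gZ)$ Remark \ref{links of vertices} produces exactly one vertex per label, and which of the two is realized depends on the $\lambda$-component of $Y$ in the coordinates $j_g$ of Lemma \ref{identification}. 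I would therefore record, for each $v_i$ with $l(v_i) \in V_\infty$, a sign $\varepsilon_i \in \{\pm 1\}$ with $v_i = hx_{l(v_i)}^{\varepsilon_i}Y'$, set $\lambda := \{\, l(v_i) \in V_\infty : \varepsilon_i = -1 \,\} \subset (Z \setminus Y')_\infty$, and take $g := h\phi(\lambda)^{-1}$. Then $Y \in V_p(gZ)$, either trivially when $Y' = Z$ (so $\lambda = \emptyset$ and $gZ = Y$) or via the edge $h(Y',Z,\lambda) \in E(\C)$ from $hY'$ to $h\phi(\lambda)^{-1}Z = gZ$, and under $j_g$ the vertex $Y$ corresponds to $(\neutre,\lambda,Y') \in \rC(Z)$.

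To finish I would invoke Remark \ref{links of vertices} to list, for each $v \in Z$, the vertices of $\Lk(Y,\Sigma(gZ))$ with that label and check case by case that the given $v_i$ appears among them: for $v \in Z_2$ and $v \in Z_p \setminus Y'$ both links contain a unique vertex with that label and they coincide; for $v \in Z_\infty$ the unique vertex in $\Lk(Y,\Sigma(gZ))$ is $hx_v^{-1}Y'$ or $hx_vY'$ according as $v \in \lambda$ or not, so our choice of $\lambda$ matches the prescribed $\varepsilon$; and for $v \in Y'$ the link contains all $f(v)$ vertices $hx_v^m(Y' \setminus \{v\})$ with $0 \le m < f(v)$, one of which is $v_i$. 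The main obstacle, and the only place where more than pure bookkeeping is needed, is precisely the choice of $\lambda$ (equivalently of $g$) in the reverse direction.
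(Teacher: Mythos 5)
Your proposal is correct and follows essentially the same route as the paper: both directions reduce to the label description of $\Lk(Y,\Sigma(gZ))$, and in the reverse direction you take $Z=Y'\cup\{v_1,\dots,v_k\}$ and twist by $g=h\phi(\lambda)^{-1}$, where $\lambda$ records which $V_\infty$-labelled neighbours carry a negative exponent --- exactly the paper's $\omega=\{v\mid g_v=x_v^{-1}\}$ and $g=\phi(\omega)^{-1}$. The only (cosmetic) difference is that you verify membership of the $v_i$ in $\Lk(Y,\Sigma(gZ))$ by computing in the coordinates $j_g$ of Lemma \ref{identification}, whereas the paper exhibits each neighbour $h_vZ_v$ as a vertex of $V_p(gZ)$ via an explicit edge $h_v(Z_v,Z,\lambda_v)$ of $\C$.
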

\begin{proof}
	As $v_1,\dots,v_k\in\Lk(Y,\Sigma)$ are pairwise adjacent, $l(v_i)\neq l(v_j)$ so we write $v_i=l(v_i)\in V$ as mentioned in Remark \ref{links of vertices}. Assume there exists $Z\in V(\X)$ and $g\in D$ such that $Y\in\Sigma(gZ)$ and $v_1,\dots,v_k\in\Lk(Y,\Sigma(gZ))$. Then $Y\in V_p(gZ)$ so $Y\subset Z$ and $\{v_1,\dots,v_k\}=\{l(v_1),\dots,l(v_k)\}\subset Z$. Hence $Y\cup\{v_1,\dots,v_k\}\subset Z$ which implies $Y\cup\{v_1,\dots,v_k\}\in V(\X)$.
	
	Now assume  $Y\cup\{v_1,\dots,v_k\}\in V(\X)$. Each vertex $v\in\Lk(Y,\Sigma)$ corresponds to an edge in $\Sigma$ between $\neutre Y$ and some vertex $h_vZ_v\in\Sigma.$
	\begin{enumerate}[label=(\roman*)]
		\item If $v\in V_2$, the vertex $v\in \Lk(Y,\Sigma)$ corresponds to an edge between $Y$ and $x_{v}Y$. In this case fix $g_v=\neutre$. 
		\item If $v\in V_{\infty}$, the vertex $v\in \Lk(Y,\Sigma)$ corresponds to an edge between $Y$ and $\phi(v)Y$ or between $Y$ and $\phi(v)^{-1}Y$. In the first case fix $g_v = \neutre$. 
		 In the second case fix $g_v = \phi(v)^{-1}=x_v^{-1}$. Note that only one of these cases can occur as $v_1,\dots,v_k$ are pairwise adjacent.
		\item For $v\in V_p\setminus Y$, the vertex $v\in\Lk(Y,\Sigma)$ corresponds to an edge between $Y$ and $Y\cup\{v\}$. In this case we fix $g_v = \neutre$.
		\item For $v \in Y$, the vertex $v\in \Lk(Y,\Sigma)$ corresponds to an edge between $Y$ and $x_{v}^t(Y\setminus\{v\})$, for some $1\leq t\leq f(v)$. In this case fix $g_v = \neutre$.
	\end{enumerate}
	We claim that for $Z=Y\cup\{v_1,\dots,v_k\} $ and $g = \Pi_{i=1}^kg_{v_i}$ we have $Y\in\Sigma(gZ)$ and $v_1,\dots,v_k\in\Lk(Y,\Sigma(gZ))$. Since $Y\cup\{v_1,\dots,v_k\}\in V(\X)$, we have $g_vg_w=g_wg_v$ for all $v,w\in\{v_1,\dots,v_k\}$. In fact $g = \phi(\omega)^{-1}$ for $\omega = \{v\in Z\mid g_v = x_v^{-1}\}\subset (Z\setminus Y)_{\infty}$. Hence $Y\in V_p(gZ)$. Let $v\in\{v_1,\dots,v_k\}$. Now we need to show that $h_vZ_v\in V_p(gZ)$. We use the case by case analysis above to fix the following notation.
	\begin{enumerate}[label=(\roman*)]
		\item If $v\in V_2$, we have $h_v Z_v = x_{v}Y$ and we set $\lambda_v = \omega \subset (Z\setminus Z_v)_{\infty}$. 
		\item If $v\in V_{\infty}$ and $h_v Z_v = \phi(v)Y$, let $\lambda_v = \omega\cup \{v\} \subset (Z\setminus Z_v)_{\infty}$. If $v\in V_{\infty}$ and $h_vZ_v = \phi(v)^{-1}Y$ , let $\lambda_v = \omega\setminus\{v\}\subset (Z\setminus Z_v)_{\infty}$. 
		\item If $v\in V_p\setminus Y$, we have $h_vZ_v = Y\cup\{v\}$ and we set $\lambda_v = \omega \subset (Z\setminus Z_v)_{\infty}$. 
		\item If $v\in Y$, we have $h_vZ_v = x_{v}^t(Y\setminus\{v\})$ for some $1\leq t\leq f(v)$ and we set $\lambda_v = \omega \subset (Z\setminus Z_v)_{\infty}$. 
	\end{enumerate}
	 As $Z =Y\cup\{v_1,\dots,v_k\} \in V(\X)$, we have $gZ = h_v\phi(\lambda_v)^{-1}Z$ and $Z_v\subset Z$, so $h_vZ_v\in V(\St_{in}(gZ,\C))$. As additionally $Z_v\subset Z\cap V_p$, we have $h_vZ_v\in V_p(gZ)$. 
\end{proof}

We now have the necessary tools to show the following statement.

\begin{Theorem}\label{thm:cat}
	The cell complex $\Sigma$ is CAT(0).
\end{Theorem}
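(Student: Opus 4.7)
My plan is the classical Cartan--Hadamard / Moussong combination. By Lemma \ref{simply connected} the complex $\Sigma$ is simply connected, and it has only finitely many isometry types of cells (parametrized by the finite set $V(\X)$ of shape blocks $\rC(Y)$). By Bridson's version of Cartan--Hadamard for length spaces of curvature bounded above, it will then suffice to establish that $\Sigma$ is locally CAT(0), which for a piecewise Euclidean cell complex with finitely many shapes reduces to showing that the link $\Lk(Y, \Sigma)$ at every vertex $Y \in V_p(\C)$ is CAT(1). Since $\C$ is a simplicial subdivision of $\Sigma$ (Lemma \ref{simply connected}), each link is naturally a piecewise spherical simplicial complex, so I will apply Moussong's Lemma \ref{Moussong}.

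The first half of Moussong's criterion---that all simplices have size $\geq \pi/2$---should be immediate. Remark \ref{links of vertices} describes adjacent vertices in any block link $\Lk(Y, \Sigma(hZ))$ as labelled by $u, w \in V$ with distance $\pi - \pi/m(u,w) \geq \pi/2$ (with equality exactly in the join-edge case $m(u,w) = 2$), and the isometric inclusions $\Lk(Y, \Sigma(hZ)) \subset \Lk(Y, \Sigma)$ cover the whole link.

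The substantive work is the metric flag condition. Take pairwise adjacent $v_1, \dots, v_k \in \Lk(Y, \Sigma)$; by Remark \ref{links of vertices} their labels $u_i := l(v_i) \in V$ are pairwise distinct. I must show that $v_1, \dots, v_k$ span a simplex of $\Lk(Y, \Sigma)$ if and only if the cosine matrix $(c_{ij}) = (\cos d(v_i, v_j))$ is positive definite. The forward direction is immediate from Lemma \ref{lem:link} together with the metric flag property of each block link (already established inside Remark \ref{links of vertices} via Lemma \ref{posdef}). The converse is the heart of the proof: assuming positive definiteness of $(c_{ij})$, I plan to deduce $Y \cup \{u_1, \dots, u_k\} \in V(\X)$ and then invoke Lemma \ref{lem:link} to realize the $v_i$ in a common block link, inside which they automatically span a simplex by the block-level flag property.

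To implement the converse, I will compare $(c_{ij})$ with the cosine matrix $C$ of the Dyer graph spanned by $S := Y \cup \{u_1, \dots, u_k\}$. The Dyer-graph axiom forcing $m(u,v) = 2$ whenever $u \in V_p \cup V_\infty$ arranges $C$ in block-diagonal form: the $V_2$-block coincides with the corresponding principal submatrix of $(c_{ij})$, while the complementary $V_p \cup V_\infty$ part is an identity block; analogously the rows and columns of $(c_{ij})$ indexed by $v_i$ with $u_i \notin V_2$ form an identity block. Thus positive definiteness of $(c_{ij})$ transfers to $C$, and Lemma \ref{posdef} upgrades this to sphericity of $D_S$, i.e.\ $S \in V(\X)$. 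I expect the main obstacle to be carefully handling those $v_i$ whose label $u_i$ lies in $Y$ (arising from the discrete factor $\Lk(c_{u_i}, \stern(u_i))$ of the join and contributing only $\pi/2$ angles with every other vertex): bookkeeping the identifications and verifying the block-diagonal reduction in the presence of such labels is the only subtle point in the plan.
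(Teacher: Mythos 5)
Your proposal is correct and follows essentially the same route as the paper: simple connectedness from Lemma \ref{simply connected}, the link condition of \cite{BriHae} (II.5.4), edge lengths $\geq \pi/2$ from Remark \ref{links of vertices}, and the metric flag condition established by reducing, via Lemma \ref{lem:link} and the identity-block structure of the rows indexed by labels in $V_p\cup V_\infty$ (including those in $Y$ itself), to positive definiteness of the Dyer cosine matrix and Lemma \ref{posdef}. The subtle point you flag — the vertices labelled by elements of $Y$ contributing only $\pi/2$ angles — is exactly the bookkeeping the paper carries out in its chain of positive-definiteness equivalences.
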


\begin{proof}
	 By \cite{BriHae} Theorem II.5.4, $\Sigma$ is CAT(0) if and only if it is simply connected and the link of every vertex is CAT(1). By Lemma \ref{simply connected}, the cell complex $\Sigma$ is simply connected. Let us now prove that the link of every vertex is CAT(1) by using Moussong's Lemma \ref{Moussong}. Consider $gY\in V(\Sigma)$ with $g=\neutre$. So $gY = \neutre Y=Y$.
	  \begin{Claim}
	 	Every edge in the link $\Lk(Y,\Sigma)$ of $Y$ in $\Sigma$ has length $\geq \pi/2$.
	 \end{Claim} 
 
 \begin{proof}
 	Since the vertex $Y\in\Sigma$ is contained in $\Sigma(gZ)$ if and only if $gZ\in \St_{out}(Y,\C)$ we can describe
 	$\Lk(Y,\Sigma)$ as the union $\bigcup_{gZ\in \St_{out}(Y,\C)}\Lk(Y,\Sigma(gZ))$, where \linebreak $\Lk(Y,\Sigma(gZ))$ is the link of $Y$ in the subcomplex $\Sigma(gZ)$. By Remark \ref{links of vertices}, the length of edges in $\Lk(Y,\Sigma(gZ))$ satisfy $d(u,v) = \pi-\pi/m(l(u),l(v)) \geq \pi/2$ as $m(l(u),l(v)) \geq 2$. So each edge in $\Lk(Y,\Sigma)$ has length $\geq \pi/2$.
 \end{proof}

	\begin{Claim}
	The link $\Lk(Y,\Sigma)$ of $Y$ in $\Sigma$ is metrically flag.
\end{Claim}

\begin{proof}
	Consider a set of pairwise adjacent vertices $v_1, \dots, v_k\in \Lk(Y,\Sigma)$. As mentioned in Remark \ref{links of vertices}, so $l(v_i)\neq l(v_j)$, so we write $v_i=l(v_i)\in V$. In particular $Y\cup\{v_1,\dots,v_k\}$ spans a complete subgraph of $\Gamma$. So $v_1,\dots,v_k$ span a simplex in $\Lk(Y,\Sigma)$ if and only $v_1, \dots, v_k$ span a simplex in $\Lk(Y,\Sigma(gZ))$ for some $gZ\in V(\C)$. By Remark \ref{links of vertices} $\Lk(Y,\Sigma(gZ))$ is flag. So $v_1, \dots, v_k\in \Lk(Y,\Sigma)$ span a simplex in $\Lk(Y,\Sigma)$ if and only if there exists some $gZ\in V(\C)$ with $Y\in \Sigma(gZ)$ and $v_1,\dots,v_k\in\Lk(Y,\Sigma(gZ))$. By Lemma \ref{lem:link}, this is the case if and only if $Y'=Y\cup\{v_1,\dots,v_k\}\in V(\X)$. By Fact \ref{posdef}, $Y'\in V(\X)$ if and only if the matrix $(\cos(\pi-\pi/m(u,v)))_{u,v\in Y'}$ is positive definite. As $\pi-\pi/m(u,v) = \pi/2$ for all $u\in Y'\setminus V_2$, $v\in Y'\setminus\{u\}$ and $\pi-\pi/m(u,u) = 0$ for all $u\in Y'$, the matrix $(\cos(\pi-\pi/m(u,v)))_{u,v\in Y'}$ is positive definite if and only if its restriction $(\cos(\pi-\pi/m(u,v)))_{u,v\in Y'\cap V_2}$ is positive definite. As $\pi-\pi/m(u,v) = d(u,v)$ for all $u,v\in Y'\cap V_2$, $(\cos(\pi-\pi/m(u,v)))_{u,v\in Y'\cap V_2}$ is positive definite if and only if $(\cos(d(u,v))_{u,v\in Y'\cap V_2}$ is positive definite. As ${u,v\in Y'\cap V_2} = \{v_1,\dots,v_k\}\cap V_2$, we have $(\cos(d(u,v))_{u,v\in Y'\cap V_2}$ is positive definite if and only if $(\cos(d(u,v))_{u,v\in \{v_1,\dots,v_k\}\cap V_2}$ is positive definite. Finally $d(u,u) = 0$ for all $u\in \{v_1,\dots,v_k\}$, and $d(u,v) = \pi/2$ for all $u\in \{v_1,\dots,v_k\}\setminus V_2$, $v\in \{v_1,\dots,v_k\}\setminus\{u\}$, so the matrix \linebreak $(\cos(d(u,v))_{u,v\in \{v_1,\dots,v_k\}\cap V_2}$ is positive definite if and only if $(\cos(d(u,v))_{u,v\in \{v_1,\dots,v_k\} }$ is positive definite. So we conclude that $v_1, \dots, v_k\in \Lk(Y,\Sigma)$ span a simplex if and only if $(\cos(d(u,v))_{u,v\in \{v_1,\dots,v_k\} }$ is positive definite. So $\Lk(Y,\Sigma)$ is metrically flag.
\end{proof}

So by Moussong's Lemma, $\Lk(Y,\Sigma)$ is CAT(1). Since $D$ acts by isometries on $\Sigma$, the link $\Lk(gY, \Sigma)$ is CAT(1) for every $g\in D$. We conclude that $\Sigma$ is CAT(0).

\end{proof}

\begin{Bemerkung}
	If $D$ is a spherical Dyer group the scwol $\C$ decomposes as a product $\C_2\times\C_p\times\C_{\infty}$. For $i\in\{2,p,\infty\}$ let $\Sigma_i$ be the cell complex associated to $D_i$. So $\Sigma_2 = \Cox(V_2)$, $\Sigma_{\infty} = \R^{|V_{\infty}|}$ and $\Sigma_p = \stern(V_p)$. Then $\Sigma = \Sigma_2\times\Sigma_{\infty}\times\Sigma_p$ where each factor is known to be CAT(0). So $\Sigma$ is CAT(0).
\end{Bemerkung}

\begin{Korollar}
	The Dyer group $D$ is CAT(0).
\end{Korollar}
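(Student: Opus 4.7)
The plan is to apply the standard Bridson--Haefliger criterion (\cite{BriHae}, Theorem II.5.4): a piecewise Euclidean complex with finitely many isometry types of cells is CAT(0) if and only if it is simply connected and every vertex link is CAT(1). Simple connectivity of $\Sigma$ is already handed to us by Lemma \ref{simply connected}, so the entire task reduces to proving that $\Lk(gY,\Sigma)$ is CAT(1) for every vertex $gY \in V_p(\C)$. Since $D$ acts by isometries and transitively on the orbit of each vertex up to the $D^f_Y$-stabilizer, I would reduce to the case $g=\neutre$ and write $Y$ for $\neutre Y$.

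To verify CAT(1) on the link I would invoke Moussong's Lemma \ref{Moussong}, which has two hypotheses: (i) every edge of $\Lk(Y,\Sigma)$ has length at least $\pi/2$, and (ii) $\Lk(Y,\Sigma)$ is a metric flag complex. Condition (i) is straightforward. Since $Y \in \Sigma(gZ)$ precisely when $gZ \in \St_{out}(Y,\C)$, one has the covering $\Lk(Y,\Sigma) = \bigcup_{gZ \in \St_{out}(Y,\C)} \Lk(Y,\Sigma(gZ))$; edge lengths inside each piece are $\pi - \pi/m(l(u),l(v)) \geq \pi/2$ by Remarks \ref{links of blocks} and \ref{links of vertices}, since $m \geq 2$.

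The main obstacle is condition (ii). Given pairwise adjacent vertices $v_1,\dots,v_k \in \Lk(Y,\Sigma)$, Remark \ref{links of vertices} says the labels $l(v_i)$ are distinct, so I may identify $v_i$ with $l(v_i) \in V$. Because each local link $\Lk(Y,\Sigma(gZ))$ is flag (Remark \ref{links of vertices}), the $v_i$ span a simplex in $\Lk(Y,\Sigma)$ if and only if they all sit in some common $\Lk(Y,\Sigma(gZ))$; by Lemma \ref{lem:link} this happens exactly when $Y' := Y \cup \{v_1,\dots,v_k\} \in V(\X)$, i.e.\ when the induced Dyer subgraph is spherical. By Lemma \ref{posdef}, sphericity of $\Gamma_{Y'}$ is equivalent to positive definiteness of its cosine matrix $(\cos(\pi - \pi/m(u,v)))_{u,v \in Y'}$.

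A routine reduction then matches this to the CAT(1) flag criterion. Since $m(u,v)=2$ whenever $u \in V \setminus V_2$ and $v \neq u$, and $m(u,u)=1$, the off-diagonal entries of the cosine matrix involving a non-$V_2$ coordinate are all $0$; hence positive definiteness of the full matrix on $Y'$ is equivalent to positive definiteness of its restriction to $Y' \cap V_2$. The same phenomenon occurs in the link metric: $d(u,v) = \pi/2$ whenever $u \in \{v_1,\dots,v_k\} \setminus V_2$, so $(\cos d(v_i,v_j))_{i,j}$ is positive definite iff its restriction to $\{v_1,\dots,v_k\} \cap V_2$ is positive definite, and on that restriction the two matrices agree because $d(v_i,v_j) = \pi - \pi/m(v_i,v_j)$ for $v_i,v_j \in V_2$. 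Thus $\{v_1,\dots,v_k\}$ spans a simplex iff $(\cos d(v_i,v_j))$ is positive definite, which is exactly the metric flag condition. Moussong's Lemma now gives CAT(1) for every link, and the Bridson--Haefliger criterion yields the theorem.
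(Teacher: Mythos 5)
There is a gap, and it is precisely at the point where the statement about the space has to be converted into a statement about the group. What you have written is, in substance, a faithful reproduction of the paper's proof of Theorem \ref{thm:cat} (that the cell complex $\Sigma$ is CAT(0)): simple connectivity from Lemma \ref{simply connected}, links covered by the pieces $\Lk(Y,\Sigma(gZ))$, the flag/positive-definiteness matching via Lemma \ref{lem:link} and Lemma \ref{posdef}, and Moussong's Lemma. That theorem, however, is already established in the paper immediately before the corollary, and the corollary is entitled to cite it. The corollary itself asserts something different: that the \emph{group} $D$ is CAT(0), i.e.\ that $D$ admits a geometric action on some CAT(0) space. Your proposal ends with ``the Bridson--Haefliger criterion yields the theorem,'' but that criterion only yields that $\Sigma$ is CAT(0); it says nothing about $D$.

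The missing step --- which is the entire content of the paper's one-line proof --- is that $D$ acts properly discontinuously and cocompactly by isometries on $\Sigma$. You do remark in passing that $D$ acts by isometries, but you never address properness or cocompactness, and neither is automatic. Properness comes from the fact that cell stabilizers are conjugates of the local groups $D^f_Y$, which are finite by construction; cocompactness comes from the fact that $\X$ is a finite scwol (since $V(\Gamma)$ is finite), so $\Sigma$ has finitely many $D$-orbits of cells, the quotient being controlled by $\X$. Without these two observations the conclusion ``$D$ is CAT(0)'' does not follow from ``$\Sigma$ is CAT(0),'' so the proposal as written proves the preceding theorem rather than the corollary.
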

\begin{proof}
	$D$ acts properly discontinuously and cocompactly by isometries on $\Sigma$.
\end{proof}

\begin{Bemerkung} If the Dyer group $D$ is a Coxeter group, $\Sigma$ is the Davis-Moussong complex described in Theorem \ref{Davis}. If the Dyer group $D$ is a right-angled Artin group, $\Sigma$ is the Salvetti complex described in Section \ref{sec:Salvetti}. The dimension of $\Sigma$ is $\dim(\Sigma)=\max\{|Y|\mid Y\in V(\X)\}$. Consider the Coxeter group $W$ from Theorem \ref{Dyer} and its associated Davis-Moussong complex $\Sigma(W)$. The dimension of $\Sigma(W)$ is $\dim(\Sigma(W))=\max\{|S| \mid S\subset V(\Lambda), W_S \text{ is finite } \}$. Looking at the construction of the graph $\Lambda$ we can see that $\dim(\Sigma(W))=\max\{|Y|+|V_p|+|V_{\infty}\setminus Y| \mid Y\in V(\X)\}$. So $\dim(\Sigma)\leq \dim(\Sigma(W))$.

\end{Bemerkung}

\bibliographystyle{alpha}
\bibliography{MireilleBib}

\Addresses
\end{document}